\newcommand{\cip}{\stackrel{\P}{\rightarrow}}
\newcommand{\eid}{\stackrel{\rm d}{=}}
\newcommand{\x}{\mathbf{x}}
\newcommand{\tx}{\tilde{\mathbf{x}}}
\newcommand{\y}{\mathbf{y}}
\definecolor{darkblue}{rgb}{.2, 0.2,.8}
\definecolor{darkgreen}{rgb}{0,0.5,0.3}
\definecolor{darkred}{rgb}{.8, .1,.1}
\newcommand{\red}{\color{darkred}}
\newcommand{\blue}{\color{darkblue}}
\newtheorem{lemma}{Lemma}[section]
\newtheorem{theorem}[lemma]{Theorem}
\newtheorem{proposition}[lemma]{Proposition}
\newtheorem{definition}[lemma]{Definition}
\newtheorem{corollary}[lemma]{Corollary}
\newtheorem{example}[lemma]{Example}
\newtheorem{exercise}[lemma]{Exercise}
\newtheorem{remark}[lemma]{Remark}
\newtheorem{fig}[lemma]{Figure}
\newtheorem{tab}[lemma]{Table}
\newcommand{\cid}{\stackrel{\rm d}{\rightarrow}}
\newcommand{\bth}{\begin{theorem}}
\newcommand{\ethe}{\end{theorem}}
\newcommand{\bre}{\begin{remark}\em }
\newcommand{\ere}{\end{remark}}
\newcommand{\ble}{\begin{lemma}}
\newcommand{\ele}{\end{lemma}}
\newcommand{\bde}{\begin{definition}}
\newcommand{\ede}{\end{definition}}
\newcommand{\bco}{\begin{corollary}}
\newcommand{\eco}{\end{corollary}}
\newcommand{\bpr}{\begin{proposition}}
\newcommand{\epr}{\end{proposition}}
\newcommand{\bexer}{\begin{exercise}}
\newcommand{\eexer}{\end{exercise}}
\newcommand{\bexam}{\begin{example}}
\newcommand{\eexam}{\end{example}}
\newcommand{\bfi}{\begin{fig}}
\newcommand{\efi}{\end{fig}}
\newcommand{\btab}{\begin{tab}}
\newcommand{\etab}{\end{tab}}
\newcommand{\sign}{{\rm sign}}
\newcommand{\Var}{\operatorname{Var}}
\newcommand{\Cov}{\operatorname{Cov}}
\newcommand{\rhs}{right-hand side}
\newcommand{\beao}{\begin{eqnarray*}}
\newcommand{\eeao}{\end{eqnarray*}\noindent}
\newcommand{\beam}{\begin{eqnarray}}
\newcommand{\eeam}{\end{eqnarray}\noindent}
\newcommand{\beqq}{\begin{equation}}
\newcommand{\eeqq}{\end{equation}\noindent}
\newcommand{\bce}{\begin{center}}
\newcommand{\ece}{\end{center}}
\newcommand{\barr}{\begin{array}}
\newcommand{\earr}{\end{array}}
\newcommand{\vague}{\stackrel{\lower0.2ex\hbox{$\scriptscriptstyle
                    \it{v} $}}{\rightarrow}}
\newcommand{\weak}{\stackrel{\lower0.2ex\hbox{$\scriptscriptstyle
                    \it{w} $}}{\rightarrow}}
\newcommand{\what}{\stackrel{\lower0.2ex\hbox{$\scriptscriptstyle
                    \it{\hat{w}} $}}{\rightarrow}}
\newcommand{\bdis}{\begin{displaymath}}
\newcommand{\edis}{\end{displaymath}\noindent}
\newcommand{\N}{\mathbb{N}}
\newcommand{\R}{\mathbb{R}}
\newcommand{\nto}{n\to\infty}
\newcommand{\wt}{\widetilde}
\newcommand{\vep}{\varepsilon}
\newcommand{\st}{such that}
\newcommand{\bfS}{{\bf S}}
\newcommand{\bfI}{{\bf I}}
\newcommand{\E }{{\mathbb E}}
\renewcommand{\P }{{\mathbb P}}
\newcommand{\1}{\mathds{1}}
\DeclareMathOperator{\e}{e}
\newcommand{\tr}{\operatorname{tr}}
\newcommand{\bfSigma}{{\mathbf \Sigma}}
\newcommand{\jh}[1]{{\red #1}}
\renewcommand{\S}{{\mathbf S}}
\let\save@mathaccent\mathaccent
\newcommand*\if@single[3]{%
  \setbox0\hbox{${\mathaccent"0362{#1}}^H$}%
  \setbox2\hbox{${\mathaccent"0362{\kern0pt#1}}^H$}%
  \ifdim\ht0=\ht2 #3\else #2\fi
  }
\newcommand*\rel@kern[1]{\kern#1\dimexpr\macc@kerna}
\newcommand*\widebar[1]{\@ifnextchar^{{\wide@bar{#1}{0}}}{\wide@bar{#1}{1}}}
\newcommand*\wide@bar[2]{\if@single{#1}{\wide@bar@{#1}{#2}{1}}{\wide@bar@{#1}{#2}{2}}}
\newcommand*\wide@bar@[3]{%
  \begingroup
  \def\mathaccent##1##2{%
    \let\mathaccent\save@mathaccent
    \if#32 \let\macc@nucleus\first@char \fi
    \setbox\z@\hbox{$\macc@style{\macc@nucleus}_{}$}%
    \setbox\tw@\hbox{$\macc@style{\macc@nucleus}{}_{}$}%
    \dimen@\wd\tw@
    \advance\dimen@-\wd\z@
    \divide\dimen@ 3
    \@tempdima\wd\tw@
    \advance\@tempdima-\scriptspace
    \divide\@tempdima 10
    \advance\dimen@-\@tempdima
    \ifdim\dimen@>\z@ \dimen@0pt\fi
    \rel@kern{0.6}\kern-\dimen@
    \if#31
      \overline{\rel@kern{-0.6}\kern\dimen@\macc@nucleus\rel@kern{0.4}\kern\dimen@}%
      \advance\dimen@0.4\dimexpr\macc@kerna
      \let\final@kern#2%
      \ifdim\dimen@<\z@ \let\final@kern1\fi
      \if\final@kern1 \kern-\dimen@\fi
    \else
      \overline{\rel@kern{-0.6}\kern\dimen@#1}%
    \fi
  }%
  \macc@depth\@ne
  \let\math@bgroup\@empty \let\math@egroup\macc@set@skewchar
  \mathsurround\z@ \frozen@everymath{\mathgroup\macc@group\relax}%
  \macc@set@skewchar\relax
  \let\mathaccentV\macc@nested@a
  \if#31
    \macc@nested@a\relax111{#1}%
  \else
    \def\gobble@till@marker##1\endmarker{}%
    \futurelet\first@char\gobble@till@marker#1\endmarker
    \ifcat\noexpand\first@char A\else
      \def\first@char{}%
    \fi
    \macc@nested@a\relax111{\first@char}%
  \fi
  \endgroup
}
\renewcommand{\bar}{\widebar}
\begin{document}
\bibliographystyle{acm}
\title[Joint convergence of point process and Frobenius norm]{Asymptotic independence of point process and Frobenius norm of a large sample covariance matrix}
\thanks{Johannes Heiny's and Carolin Kleemann's research was partially supported by the Deutsche Forschungsgemeinschaft (DFG) via RTG 2131 High-dimensional Phenomena in Probability -- Fluctuations and Discontinuity.}

\author[J. Heiny]{Johannes Heiny}
\address{Fakult\"at f\"ur Mathematik,
Ruhruniversit\"at Bochum,
Universit\"atsstrasse 150,
D-44801 Bochum,
Germany}
\email{johannes.heiny@rub.de}
\author[C. Kleemann]{Carolin Kleemann}
\address{Fakult\"at f\"ur Mathematik,
Ruhruniversit\"at Bochum,
Universit\"atsstrasse 150,
D-44801 Bochum,
Germany}
\email{carolin.kleemann@rub.de}

\begin{abstract}
A joint limit theorem for the point process of the off-diagonal entries of a sample covariance matrix $\mathbf{S}$, constructed from $n$ observations of a $p$-dimensional random vector with iid components, and the Frobenius norm of $\mathbf{S}$ is proved. In particular, assuming that $p$ and $n$ tend to infinity we obtain a central limit theorem for the Frobenius norm in the case of finite fourth moment of the components and an infinite variance stable law in the case of infinite fourth moment. Extending a theorem of Kallenberg, we establish asymptotic independence of the point process and the Frobenius norm of $\mathbf{S}$. To the best of our knowledge, this is the first result about joint convergence of a point process of dependent points and their sum in the non-Gaussian case. 
\end{abstract}
\keywords{Gumbel distribution, extreme value theory, maximum entry, point process, central limit theorem, stable distribution, random matrix, joint convergence, asymptotic independence}
\subjclass{Primary 60G70; Secondary 60B20, 60B12, 60G55, 60F05, 60G50, 60E07}
\maketitle

\section{Introduction}

Over recent years the analysis of high-dimensional data has emerged as an important and active research area driven by a wide range of applications in various fields such as genomics, medical imaging, signal processing, financial engineering and social science. 
To study large data sets, for instance, in brain connectivity analysis or in gene expression analysis (see \cite{shaw2006intellectual, zhang2008class, fukushima2013diffcorr}), knowledge of the dependence structure plays a central role. Interpreting the data as observations of a $p$-dimensional random vector, dependence between the components of the vector is  often estimated by covariance/correlation statistics  and different functions are used to aggregate these estimates of the pairwise dependencies. For example,  \cite{schott2005, Bandedness, yaoetal2017} and \cite{Leung2018} propose sum-type tests based on the Frobenius norm which are usually powerful against dense alternatives.  
Further very popular methods of aggregating  estimates of the pairwise dependencies are  maximum-type tests, which have good power properties against sparse alternatives and have been investigated for various
covariance/correlation statistics in \cite{jiang:2004b, zhou:2007, liu:lin:shao:2008, yaoetal2017, cai:jiang:2012, han:chen:liu:2017, drton2020high, heiny:mikosch:yslas:2021} 
and \cite{he:xu:wu:pan:2021} among others.
Since in practice it is difficult to decide whether the underlying covariance matrix is sparse or dense, it is useful to combine these two types of test statistics  to cover both cases \cite{ feng2022max, fan:liao:yao:2015, he:xu:wu:pan:2021, qing:pan:2017, chen2022asymptotic}. Therefore, an understanding of the joint asymptotic behavior of these test statistics is needed.

The objective of this paper is to contribute to this line of research by providing asymptotic theory for the joint distribution of sum-type statistics and generalized maximum-type statistics of a large sample covariance matrix.
\medskip

For a sample $\x_1,\ldots, \x_n$ from the $p$-dimensional population $\x$ with independent and identically distributed (iid) components with mean zero and variance one, 
the sample covariance matrix $\S$ is given by
\begin{align*}
    \S:=\S_n:=\frac{1}{n}\sum_{t=1}^n\x_t \x_t^\top =\big(S_{ij})_{1\le i,j\le p}\,.
\end{align*}
Throughout this paper, we assume that the dimension $p=p_n$ is a positive integer sequence tending to infinity together with the sample size $n$. Thus, the $p\times p$-matrix $\S$ is a high-dimensional
random matrix whose asymptotic properties are used, for example, in independence testing. 
Sum-type and maximum-type statistics based on $\S$ are given by the (squared) Frobenius norm and the maximum off-diagonal entry of $\S$, 
\begin{equation}\label{eq:testst}
    \|\S\|_F^2:=\sum_{i,j=1}^p S_{ij}^2 =\tr(\S^2) \quad \text{ and } \quad \max_{1\le i<j\le p} S_{ij}\,.
\end{equation}
We are interested in the joint limiting distribution of $\tr(\S^2)$ and the sequence of point processes of the off-diagonal entries of $\S$
\begin{align}\label{defpp}
  N_n:=\sum_{1\leq i<j\leq p}\varepsilon_{d_p(\sqrt{n}S_{ij}-d_p)} , 
\end{align}
 where $\vep_x$ denotes the Dirac measure in $x\in \R$ and 
\begin{align}\label{eq:defdp}
d_p:=\sqrt{2\log \tilde{p}}-\frac{\log\log\tilde{p}+\log 4\pi}{2(\log\tilde{p})^{1/2}} \qquad \text{for } \tilde{p}:=p(p-1)/2\,.
\end{align}
It is well-known that the point process in \eqref{defpp} contains information about all order statistics of the $S_{ij}$'s (see \cite{embrechts:kluppelberg:mikosch:1997}). The distribution of the maximum can be recovered from the identity $\{N_n((x,\infty))=0\}=\{\max_{i<j} d_p(\sqrt{n}S_{ij}-d_p)\le x\}$, $x\in \R$. In this sense, the point process $N_n$ is a natural and meaningful generalization of the maximum. 

  In Table~\ref{tab:overview} an overview of the available results about the convergence of $\max S_{ij}, N_n$ and $\tr(\S^2)$ and the novel contributions of this paper (marked in blue) is given. For the reader's convenience, the table  contains the limit distributions themselves and precise references. It is worth mentioning that the distinction between finite and infinite fourth moment of $X$, which is a generic random variable with the same distribution as the components of $\x$, results from the sum-type statistic $\tr(\S^2)$.

\begin{table}[h]
    \centering
   \def\arraystretch{2.0}
\begin{tabular}[h]{|c|c|c|c|c|}
\hline
\rowcolor{lightgray}
&$\Var(X)<\infty$, $\E[X^{4}]=\infty$& $\E[X^4]<\infty$  \\
\hline
$\max S_{ij}$ & Gumbel distribution, \cite[Thm. 3.2]{heiny:mikosch:yslas:2021} & Gumbel distribution, \cite[Lem. 3.2]{jiang:2004b} \\
\hline
$N_n$ & Poisson process, \cite[Thm. 3.2]{heiny:mikosch:yslas:2021}  & Poisson process $N$, \cite[Thm. 3.2]{heiny:mikosch:yslas:2021} \\
\hline
$\tr(\S^2)$& {\blue Stable distribution, Thm. \ref{thm:non-clt}}  & {\blue Normal distribution $Z$, Thm. \ref{thm:clt}} \\
\hline
$(N_n,\tr(\S^2))$& open & {\blue $(N,Z)$ independent, Thm. \ref{thm:ind}} \\
 \hline
\end{tabular}
\caption{Overview of the asymptotic results about $\max S_{ij}, N_n$ and $\tr(\S^2)$.}
    \label{tab:overview}
\end{table}
\subsection{Related literature on sums, maxima and point processes} 
The joint behavior of the sum and the maximum of a sequence of real-valued random variables has been studied before, motivated for example by the evaluation of wind speed data, which is usually available in the form of the maximum wind speed and the average wind speed during a day or an hour. For iid random variables $(Y_i)_{i\ge 1}$ we set
\begin{align*}
    S_n:=\sum_{i=1}^n Y_i\quad \text{and}\quad M_n:=\max_{1\leq i\leq n} Y_i.
\end{align*}
If the distribution function $F$ of $Y_1$ belongs to the sum domain of attraction of the normal distribution and the maximum domain of attraction of an extreme value distribution, \cite{chow1978sum} showed that $(S_n,M_n)$ converges in distribution to a limit $(S,M)$, where $S$ and $M$ are independent and not degenerated. They also proved that if 
\begin{align*}
    1-F(x)= q_+\,x^{-\alpha}L(x) \quad \text{and} \quad F(-x)= q_- \,x^{-\alpha}L(x),\quad\quad x>0\,, \alpha\in(0,2)\,,
\end{align*}
where $L$ is a slowly varying function and $0<q_+\le q_++q_-= 1$, then $(S_n,M_n)$ converges to a limit $(S,M)$, where $S$ and $M$ are dependent and they provide a hybrid characteristic distribution function of $(S,M)$.

 The papers \cite{anderson1991joint, anderson1995sums, hsing1995note} generalized these results to strongly mixing stationary random variables. For stationary normal random variables \cite{ho1996asymptotic} and \cite{mccormick2000asymptotic} proved asymptotic independence under certain correlation assumptions. Recently, asymptotic independence of a quadratic form in and the maximum of independent random variables was proved in \cite{chen2022asymptotic} and asymptotic independence of the sum and the maximum of dependent normal random variables that need not be stationary or strongly mixing but fulfill conditions on the smallest and largest eigenvalue of their covariance matrix was shown in \cite{feng2022asymptotic}.
 
For a triangular array of normal distributed random variables the asymptotic independence of the point process of exceedances and the partial sum was considered in \cite{hu2009joint, tan2011joint} and extended to the multivariate case in \cite{peng2012joint}. Moreover, \cite{guo:lu:2023} established asymptotic independence of the point processes of clusters and the partial sums of bivariate stationary Gaussian triangular arrays. Asymptotic independence of other quantities derived from the sample covariance matrix $\S$ has also been considered in a variety of settings. For example, the asymptotic independence of the maximum of sample correlations and the sum of the squared sample correlations between the residuals from the ordinary least squares is proven in \cite{feng2022max}, while \cite{li2020asymptotic} showed asymptotic independence of the largest sample eigenvalues and the trace of $\S$. 

\subsection{Structure of this paper}
This paper is structured as follows. Section~\ref{sec:main} contains our main results about the point process of the off-diagonal entries of $\S$ and the Frobenius norm of $\S$. Under finite fourth moment of $X$  the Frobenius norm satisfies a CLT (Theorem~\ref{thm:clt}), while in the case of infinite fourth moment we obtain a stable limit law (Theorem~\ref{thm:non-clt}). 
The main result of this paper is Theorem~\ref{thm:ind}, which shows asymptotic independence of the point process and the Frobenius norm of $\bfS$. The challenges in the proof of this result and our novel technical contributions are outlined in Section~\ref{sec:strategy}, while Section~\ref{sec:test} presents an application to independence testing. The proofs are deferred to Section~\ref{sec:4} and helpful auxiliary results are given in Section~\ref{sec:5}.

\subsection{Notation}
Convergence in distribution (resp.\ probability) is denoted by $\cid$ (resp.\ $\cip$), equality in distribution by $\eid$, and unless explicitly stated otherwise all limits are for $\nto$. For sequences $(a_n)_n$ and $(b_n)_n$ we write $a_n=O(b_n)$ if $a_n/b_n\leq C$ for some constant $C>0$ and every $n\in\N$, and $a_n=o(b_n)$ if $\lim_{n\to\infty} a_n/b_n=0$. Additionally, we use the notation
$a_n\sim b_n$ if $\lim_{n\to\infty} a_n/b_n=1$.

\section{Main results}\label{sec:main}

Consider a sample $\x_1,\ldots, \x_n$ from the $p$-dimensional population $\x$ with iid components with generic element $X$ satisfying $\E[X]=0$ and $\E[X^2]=1$. We will work in the high-dimensional setting, where the dimension $p=p_n$ is some positive integer sequence tending to infinity as $\nto$. 
We aim to study the joint asymptotic behavior of the point processes $N_n$ of the off-diagonal entries (see \eqref{defpp}) and the Frobenius norm of the sample covariance matrix $\S=(S_{ij})=\frac{1}{n}\sum_{t=1}^n\x_t \x_t^\top$.

\subsection{Asymptotic distributions of point process and Frobenius norm of the sample covariance matrix} \label{sec:2}

First we consider the convergence in distribution of the sequence of point processes $N_n$ towards a Poisson random measure. For a detailed background on weak convergence of point processes we refer to \cite{resnick:2007, daley:verejones:1988}.
 The following result can be found in \cite[Theorem 3.2]{heiny:mikosch:yslas:2021} and \cite[Theorem 4.1]{heiny2023maximum}. 

\begin{theorem}\cite[Theorem~3.2]{heiny:mikosch:yslas:2021}\label{lem:pp}
Assume that there exist $s>2$ and $\varepsilon>0$ \st\ $\E[|X|^{s+\varepsilon}]< \infty$ and let $p=p_n\to\infty$ satisfy $p=O(n^{(s-2)/4})$, as $\nto$.
Then it holds that $N_n\cid N$,
where $N$ is a Poisson random measure with mean measure $\mu(x,\infty)=\e^{-x}$ for $x\in\R$.
\end{theorem}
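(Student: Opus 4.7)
The plan is to apply Kallenberg's convergence theorem for simple point processes (e.g.\ Resnick, Proposition~3.22). Since the candidate limit $N$ is a Poisson random measure on $\R$ with diffuse mean measure $\mu((x,\infty))=e^{-x}$, it suffices to verify, for every half-line $B_x=(x,\infty)$, the two conditions (a)~$\E[N_n(B_x)]\to \mu(B_x)=e^{-x}$ and (b)~$\P(N_n(B_x)=0)\to \exp(-\mu(B_x))$. Writing $u_n(x)=d_p+x/d_p$, the quantity $N_n(B_x)$ simply counts the number of pairs $i<j$ with $\sqrt{n}\,S_{ij}>u_n(x)$.

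For the intensity~(a), I would exploit that $\sqrt{n}\,S_{12}=n^{-1/2}\sum_{t=1}^n X_{t1}X_{t2}$ is a normalized sum of iid mean-zero, variance-one random variables. Truncating each product at a level of order $n^{\eta}$, with $\eta$ tuned so that the truncated summands possess sufficient exponential moments while the removed mass is negligible, one may apply a Cram\'er-type moderate deviation expansion to obtain
\[
\P\bigl(\sqrt{n}\,S_{12}>u_n(x)\bigr)=(1+o(1))\bigl(1-\Phi(u_n(x))\bigr)
\]
uniformly in $x$ on compacts, where $\Phi$ is the standard normal distribution function. The dimension constraint $p=O(n^{(s-2)/4})$ is precisely what keeps $u_n(x)\sim\sqrt{2\log\tilde p}$ inside the range of validity of this expansion under only an $(s+\varepsilon)$-moment on $X$. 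A direct Mills ratio calculation combined with the definition of $d_p$ in~\eqref{eq:defdp} then yields $\tilde p\,(1-\Phi(u_n(x)))\to e^{-x}$, and symmetry between all pairs delivers~(a).

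For~(b) I would use the factorial moment method. Expanding
\[
\E\binom{N_n(B_x)}{k}=\sum\P\bigl(\sqrt{n}\,S_{i_\ell j_\ell}>u_n(x)\ \text{for}\ \ell=1,\ldots,k\bigr),
\]
where the sum ranges over unordered $k$-tuples of distinct pairs $\{(i_\ell,j_\ell)\}$, I would split according to whether the $2k$ involved indices are all distinct (\emph{vertex-disjoint} tuples) or some index is shared. Vertex-disjoint tuples touch pairwise disjoint columns of the data matrix, so the corresponding $S_{i_\ell j_\ell}$ are mutually independent; their joint probability factorizes, and since such tuples dominate the count this contribution tends to $e^{-kx}/k!$ by~(a). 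Tuples with at least one shared index number only $O(p^{2k-1})$ against $O(p^{2k})$ disjoint ones, and their joint tail can be bounded, by conditioning on the shared column(s) and reapplying the moderate deviation estimate to the remaining conditionally independent sums, by essentially the product of the marginal tails; this yields an $o(1)$ contribution. Convergence of every factorial moment to that of $\Pois(e^{-x})$ then gives~(b) (and in particular the full distributional convergence $N_n(B_x)\cid\Pois(e^{-x})$).

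The principal obstacle is the uniform moderate deviation estimate underlying~(a): the threshold $u_n(x)\sim\sqrt{2\log p}$ lies at the very boundary of the range in which the normal approximation is valid, so the truncation level $n^{\eta}$ has to be carefully coupled with the dimensional exponent $(s-2)/4$, and one must verify that an $(s+\varepsilon)$-moment, rather than a strictly higher moment, still suffices to make both the truncation error and the deviation remainder negligible on the scale $\tilde p^{-1}$. Once this estimate is in place, the combinatorial bookkeeping in~(b) is largely routine.
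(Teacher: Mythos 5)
Your strategy --- verify Kallenberg's two conditions via a Cram\'er-type moderate-deviation expansion for $\sqrt n\,S_{12}$ together with a factorial-moment / Bonferroni computation that separates vertex-disjoint from overlapping index pairs --- is the same route taken in the cited reference \cite{heiny:mikosch:yslas:2021}. The present paper does not reprove the theorem, but it later imports $\lim_{n\to\infty}\wt W_{n,k}=\mu(U)^k/k!$ from p.~555 of that reference, which is precisely your vertex-disjoint computation, and Section~\ref{sec:strategy} confirms that the intensity condition (K1') is obtained by normal approximation to large deviation probabilities. Your diagnosis of where the moment and dimension hypotheses interact --- keeping $u_n(x)\sim\sqrt{2\log\tilde p}$ inside the range of validity of the normal tail approximation given an $(s+\varepsilon)$-moment and $p=O(n^{(s-2)/4})$ --- is also the correct one, and the bulk of the technical work does indeed lie in making that estimate uniform, in particular when conditioning on a shared column.

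The one genuine gap is the restriction to half-lines. In the version of Kallenberg's theorem the paper relies on (Theorem~5.2.2 of \cite{embrechts:kluppelberg:mikosch:1997} or Theorem~4.7 of \cite{kallenberg:1983}, reflected in (K2') with $\mathcal{U}$ the finite unions of intervals), the avoidance condition $\P(N_n(U)=0)\to\P(N(U)=0)$ must hold for all \emph{finite unions of bounded intervals}, not only for $U=(x,\infty)$. Avoidance probabilities on half-lines determine only the law of the largest point, not that of the whole point process: two simple point processes can agree on $\P(N((x,\infty))=0)$ for every $x$ and still disagree on $\P(N((a,b])=0)$. (Condition~(a) for intervals $(a,b]$ does follow from half-lines by subtraction, so that part is fine.) You should therefore run the factorial-moment argument with the events $B_I=\{d_p(\sqrt n\,S_{ij}-d_p)\in U\}$ for general $U\in\mathcal{U}$; the marginal probability becomes $\P\bigl(d_p(\sqrt n\,S_{ij}-d_p)\in U\bigr)\sim\tilde p^{-1}\mu(U)$, the vertex-disjoint count then gives $\mu(U)^k/k!$, and the overlapping tuples are handled exactly as you describe. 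Mechanically this is a routine replacement of a single interval by a finite union, but it is needed for the reduction via Kallenberg's theorem to be valid.
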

   The Poisson random measure $N$ with mean measure $\mu(x,\infty)=\e^{-x}$ for $x\in\R$ has the representation
   \begin{align}\label{eq:representationN}
       N=\sum_{i=1}^{\infty} \vep_{-\log \Gamma_i},
   \end{align}
   where $\Gamma_i= E_1+\cdots+E_i$, $i\ge 1$, and $(E_i)$ is a sequence of iid standard exponential random variables \cite{resnick:2007}.
   
Our second object of interest is the squared Frobenius norm of the sample covariance matrix, that is $\|\S\|_F^2:=\sum_{i,j=1}^p S_{ij}^2 =\tr(\S^2)$.
In order to study its asymptotic distribution we define 
\begin{align} \label{clt}
 Z_n:=\frac{\tr(\S^2)-\mu_n}{\sigma_n},   
\end{align}
where 
\begin{align}
\sigma_n^2 &:=\Big(\frac{p}{n}+2\frac{p^2}{n^2}+\frac{p^3}{n^3}\Big)4(\E[X^4]-1)+4\frac{p^2}{n^2}\,, \label{eq:defsigma}\\
\mu_n &:=\E[\tr(\S^2)]= \frac{p}{n}(n+\E[X^4]-2)+\frac{p^2}{n}\,. \label{eq:defmu}
\end{align}
The next result provides a CLT for $\tr(\S^2)$ in the general case that $p_n\to\infty$ and $\E[X^4]<\infty$.
\begin{theorem}\label{thm:clt}
If $\E[X^4]<\infty$ and $p=p_n\to\infty$, then as $n\to\infty$ we have $Z_n\cid Z$ for a standard normal random variable $Z$. 
\end{theorem}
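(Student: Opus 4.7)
I would use a martingale central limit theorem based on the Doob decomposition along the natural filtration $\mathcal{F}_t:=\sigma(\x_1,\ldots,\x_t)$, with $\mathcal{F}_0$ trivial. Writing $\tr(\S^2)=n^{-2}\sum_{r,s=1}^n (\x_r^\top\x_s)^2$ and using the independence of $\x_t$ from $\mathcal{F}_{t-1}$ together with $\E[(\x_r^\top\x_t)^2\mid \x_r]=\|\x_r\|^2$, a direct computation of $\xi_t:=\E[\tr(\S^2)\mid\mathcal{F}_t]-\E[\tr(\S^2)\mid\mathcal{F}_{t-1}]$ yields the compact formula
\[
n^2\xi_t\;=\;\bigl(\|\x_t\|^4-\E[\|\x_1\|^4]\bigr)\;+\;2\bigl(\x_t^\top B_t\x_t-\tr B_t\bigr),\qquad B_t:=\sum_{r<t}\x_r\x_r^\top+(n-t)I_p,
\]
so that $\tr(\S^2)-\mu_n=\sum_{t=1}^n\xi_t$ is a martingale. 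I would then verify the two standard hypotheses of the martingale CLT: (i) $V_n:=\sum_t\E[\xi_t^2\mid\mathcal{F}_{t-1}]/\sigma_n^2\cip 1$, and (ii) the conditional Lindeberg condition.

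For (i), condition on $\mathcal{F}_{t-1}$ and apply the standard quadratic-form identity
\[
\E\bigl[(\x^\top M\x-\tr M)^2\bigr]\;=\;2\tr(M^2)+(\E[X^4]-3)\sum_{i=1}^p M_{ii}^2
\]
valid for any deterministic symmetric $M$, together with analogous identities for $\E[(\|\x\|^4-\E[\|\x\|^4])^2]$ and for the cross term $\E[(\|\x\|^4-\E[\|\x\|^4])(\x^\top M\x-\tr M)]$. Plugging in $M=B_t$ expresses $\E[\xi_t^2\mid\mathcal{F}_{t-1}]$ in terms of $\tr B_t$, $\tr B_t^2$, and $\sum_i(B_t)_{ii}^2$. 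The LLN/entrywise concentration $A_{t-1}=\sum_{r<t}\x_r\x_r^\top\approx(t-1)I_p$ (so $B_t\approx(n-1)I_p$ on the diagonal and $\tr B_t^2\sim(n-t)(n+t-1)\,p$ at leading order) then yields, after careful bookkeeping, $V_n=\sigma_n^2(1+o_\P(1))$, with the three summands of \eqref{eq:defsigma} matching the three dimensional regimes $p/n\to 0$, $p/n\to c\in(0,\infty)$, and $p/n\to\infty$.

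For (ii), the subtlety is that under the single assumption $\E[X^4]<\infty$ the quantities $\|\x_t\|^4$ and $\x_t^\top B_t\x_t$ need not possess finite second moments, so a direct $L^{2+\delta}$ (Lyapunov) bound on $\xi_t$ is unavailable. The standard remedy, which I would carry out, is a truncation step: replace $X_{ij}$ by its centered truncation at level $\delta_n n^{1/4}$ with $\delta_n\downarrow 0$ slowly enough. Uniform integrability of $X^4$ forces the truncated variance and fourth moment to converge to $1$ and $\E[X^4]$, so $\mu_n$ and $\sigma_n$ are unchanged to leading order and the truncation changes $\tr(\S^2)$ only by $o_\P(\sigma_n)$; on the truncated array, $|\xi_t|$ admits a polynomial bound in $\delta_n n^{1/4}$ and $p/n$, and Lyapunov is routine. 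The main obstacle of the proof is step (i): the three dimensional regimes give rise to three different dominant scalings in $\sigma_n^2$, and the cross contribution between $\|\x_t\|^4$ and $\x_t^\top B_t\x_t$ (which formally involves moments of $X$ beyond the fourth and therefore must be controlled through the truncation) has to be tracked uniformly; once the concentration of $A_{t-1}$ is in hand, this is a calculation rather than a conceptual difficulty.
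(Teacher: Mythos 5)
Your proposal takes a genuinely different martingale route from the paper: you filter over the $n$ observations, setting $\mathcal{F}_t=\sigma(\x_1,\ldots,\x_t)$ and obtaining differences $\xi_t$ built from the quadratic form $\x_t^\top B_t\x_t$ with $B_t=\sum_{r<t}\x_r\x_r^\top+(n-t)I_p$, in the spirit of classical linear-spectral-statistics CLTs. The paper instead filters over the $p$ variables, $\mathcal{F}_j=\sigma(\tx_1,\ldots,\tx_j)$ with $\tx_i=(X_{i1},\ldots,X_{in})$, and works with $\bar T_{ij}$. Both decompositions are correct martingale decompositions of $\tr(\S^2)$. The paper's choice avoids ever touching the random matrix $B_t$ — the conditional moments that appear, such as $\E[\bar T_{ij}^2\mid\tx_i]$, only involve $\tx_i$ and deterministic moments of $X$ — whereas your approach buys a very clean closed form for $\xi_t$ and a single quadratic-form variance identity, at the price of needing concentration of $A_{t-1}=\sum_{r<t}\x_r\x_r^\top$ (trace, squared trace, diagonal) uniformly in $t$. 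Both identify the same obstruction (moments of $X$ beyond the fourth appear) and both fix it by truncation, so conceptually the plans run parallel.

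There is, however, a concrete gap in your truncation step. You truncate at level $\delta_n n^{1/4}$ and assert that uniform integrability of $X^4$ forces the truncation to change $\tr(\S^2)$ by $o_\P(\sigma_n)$. That inference does not follow: uniform integrability only controls the effect on the deterministic quantities $\mu_n$ and $\sigma_n$, not on the random variable $\tr(\S^2)$ itself. The paper instead truncates at $\beta_n(np)^{1/4}$ (Lemma~\ref{lem:cut}): a union bound over all $np$ entries then gives $\P\bigl(\exists\,(i,t):|X_{it}|>\beta_n(np)^{1/4}\bigr)\to 0$, so that with probability tending to one the truncation does nothing and the remaining discrepancy $\E[T_{ij}^2-\bar T_{ij}^2]$ is deterministic and can be bounded directly. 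With your level $\delta_n n^{1/4}$ the corresponding union bound reads $p\,\E[X^4\1_{\{|X|>\delta_n n^{1/4}\}}]/\delta_n^4$, which need not vanish when $p$ grows fast relative to $n$ (the theorem allows any $p\to\infty$, e.g.\ $p=\e^n$, and the assumption is only $\E[X^4]<\infty$, so the tail $\E[X^4\1_{\{|X|>M\}}]$ can decay arbitrarily slowly). So the truncation level has to incorporate $p$, and the claimed negligibility of the truncation needs a real argument — the union bound is exactly the missing step. Separately, as a minor point, the stated leading order $\tr B_t^2\sim(n-t)(n+t-1)p$ is not quite right: computing $\E[\tr B_t^2]$ gives $p(n-1)^2+(p-1)p(t-1)+p(t-1)(\E[X^4]-1)$, so the leading behaviour is $\sim p\,n^2+p^2\,t$, and the $p^2t$ term is not negligible in the regimes $p\gtrsim n$. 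This does not sink the strategy, but it shows the ``careful bookkeeping'' in step (i) is less routine than the proposal suggests.
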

\begin{remark}{\em 
 Under special assumptions on $X$ and the growth of $p$, the behavior of $Z_n$ can be deduced from CLTs for so-called linear spectral statistics of sample covariance matrices: for example, \cite[Theorem 9.10]{bai:silverstein:2010} and \cite[Theorem 1.4]{pan2008central} in the case $p/n\to C\in (0,\infty)$ and $\E[X^4]<\infty$, or \cite[Theorem 3.1]{qiu2021asymptotic} in the case $n^2/p=O(1)$ if $\E[|X|^{6+\delta}]<\infty$ for some $\delta>0$. 
 }\end{remark}
In contrast to the convergence of the point processes $N_n$ in Theorem~\ref{lem:pp}, the CLT in Theorem~\ref{thm:ind} requires the existence of the fourth moment of $X$. To characterize the asymptotic distribution of $\tr(\S^2)$ in the case $\E[X^4]=\infty$ we need to assume that $X$ is a regularly varying random variable. 
We say that a random variable $X$ (or its distribution) is {\it regularly varying with index $\alpha>0$} if
\begin{align*}
    \P(|X|>x)=x^{-\alpha} \,L(x)\,, \qquad x>0\,,
\end{align*}
where $L$ is a slowly varying function, i.e., $\lim_{x\to\infty} L(tx)/L(x)=1$ for $t>0$. Examples of regularly varying distributions are the Pareto distribution with parameter $\alpha$ and the $t$-distribution with $\alpha$ degrees of freedom.

For a regularly varying random variable $X$ with index $\alpha$ it holds that $\E[|X|^\beta]<\infty$ if $\beta<\alpha$ and $\E[|X|^\beta]=\infty$ if $\beta>\alpha$.  It is well--known that the sequence  $(a_{k})_k$ defined through 
$$a_k:=\inf \{x\in \R: \P(|X|>x)\le 1/k \}\,, \qquad k\ge 1\,,$$ 
is of the form $a_k=k^{1/\alpha}\ell(k)$, where $\ell$ is a slowly varying function. For further properties of regularly varying functions we refer to \cite{bingham:goldie:teugels:1987, samorodnitsky:taqqu:1994}. In the next theorem we consider the case that $X$ has finite variance but infinite fourth moment.

\begin{theorem}\label{thm:non-clt}
Let $X$ have a regularly varying distribution with index $\alpha\in(2,4)$ and assume that $p=p_n\to\infty$. Then it holds that
\begin{align}\label{eq:nonclt}
\frac{n^2}{a_{np}^4}\tr(\S^2)-\frac{2n(n+p-2)}{a_{np}^4}\tr(\S)+\frac{np(n+p-2)}{a_{np}^4}\cid \zeta_{\alpha/4},\qquad \nto\,,
\end{align}
where $\zeta_{\alpha/4}$ is a non-degenerated, $\alpha/4$-stable random variable with characteristic function 
\begin{align*}
    \E[e^{{\rm i}t\zeta_{\alpha/4}}]=\exp\Big({\rm i}tc_\alpha+\frac{\alpha}{4}\int_0^\infty\big(e^{itx}-1-\frac{{\rm i}tx}{1+x^2}\big)x^{-(\alpha/4+1)}dx\Big),
\end{align*}
where ${\rm i}$ is the imaginary unit and $c_\alpha$ is a constant only depending on $\alpha$.
\end{theorem}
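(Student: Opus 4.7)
The strategy is to isolate the one piece of the centered quantity in \eqref{eq:nonclt} that carries the $\alpha/4$ heavy tail, namely $T_1 := \sum_{t=1}^n\sum_{i=1}^p X_{ti}^4$, and to show that every other contribution is $o_P(a_{np}^4)$. I first expand $n^2\tr(\mathbf S^2)=\sum_{s,t,i,j}X_{si}X_{sj}X_{ti}X_{tj}$ and partition the indices into the four coincidence patterns $(s=t \text{ or } s\neq t)\times(i=j \text{ or } i\neq j)$. Writing $W_{ti}=X_{ti}^2-1$ and using $\sum_{t,i}W_{ti}^2=T_1-2n\tr(\mathbf S)+np$ to re-express the $X_{ti}^4$ pieces produced by the blocks $(s=t,i\neq j)$ and $(s\neq t,i=j)$ in terms of $T_1$ and a linear functional of $\tr(\mathbf S)$, a direct computation yields the key identity
\begin{equation*}
n^2\tr(\mathbf S^2)-2n(n+p-2)\tr(\mathbf S)+np(n+p-2)=T_1+E_n^{(1)}+E_n^{(2)}+T_4,
\end{equation*}
with $E_n^{(1)}=\sum_t\sum_{i\neq j}W_{ti}W_{tj}$, $E_n^{(2)}=\sum_i\sum_{s\neq t}W_{si}W_{ti}$, and $T_4=\sum_{s\neq t}\sum_{i\neq j}X_{si}X_{sj}X_{ti}X_{tj}$; the specific centering coefficients in \eqref{eq:nonclt} are exactly what this bookkeeping produces.

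For the main term, note that $\P(X^4>x)=\P(|X|>x^{1/4})\sim x^{-\alpha/4}L(x^{1/4})$, so the summands $X_{ti}^4$ are iid, nonnegative, and regularly varying with index $\alpha/4\in(1/2,1)$ and quantile sequence $a_k^4$. The classical one-sided stable limit theorem then yields $T_1/a_{np}^4\cid\zeta_{\alpha/4}$, where $\zeta_{\alpha/4}$ is the totally right-skewed positive $\alpha/4$-stable law. The characteristic function stated in the theorem is the standard L\'evy--Khintchine representation of this law: the compensator $-itx/(1+x^2)$ is the usual truncation, while the shift $itc_\alpha$ absorbs $(\alpha/4)\int_0^\infty itx(1+x^2)^{-1}x^{-\alpha/4-1}dx$, which converges precisely because $\alpha/4\in(1/2,1)$.

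It remains to show $(T_4+E_n^{(1)}+E_n^{(2)})/a_{np}^4\cip 0$. For $T_4$ a direct variance calculation suffices: every summand is a product of four independent mean-zero factors with unit second moment, so distinct summands are $L^2$-orthogonal and $\var(T_4)=n(n-1)p(p-1)$; Chebyshev then gives $T_4=O_P(np)=o_P(a_{np}^4)$ because $a_{np}^4\sim(np)^{4/\alpha}L$ with $4/\alpha>1$. The quadratic forms $E_n^{(1)}$ and $E_n^{(2)}$ are delicate because $\var W_{ti}=\infty$; the decisive observation, visible in the identity $\sum_{i\neq j}W_{ti}W_{tj}=(\sum_i W_{ti})^2-\sum_i W_{ti}^2$, is that the diagonal $W_{ti}^2$ pieces---which alone would produce an $\alpha/4$ tail competing with $T_1$---cancel, so the surviving cross products carry only the milder tail index $\alpha/2$. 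I would implement this rigorously by truncating $W_{ti}$ at level $M_n=\omega_n a_{np}^2$ with $\omega_n\to\infty$ slowly: since $np\,\P(|W|>M_n)\sim\omega_n^{-\alpha/2}\to 0$, no variable is truncated with probability tending to one, while Karamata's theorem provides $\E[(W\mathds{1}_{\{|W|\leq M_n\}})^2]\lesssim M_n^{2-\alpha/2}$, whence $\var(\bar E_n^{(1)})\lesssim np^2 M_n^{4-\alpha}=o(a_{np}^8)$ for a sufficiently slow $\omega_n$; a symmetric bound handles $E_n^{(2)}$. Slutsky's theorem then concludes \eqref{eq:nonclt}. The main obstacle is precisely the control of $E_n^{(1)}$ and $E_n^{(2)}$: naive moment methods diverge due to the infinite variance, and the argument crucially relies on the diagonal-tail cancellation above combined with a carefully tuned truncation.
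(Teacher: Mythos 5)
Your proof follows the same route as the paper: isolate $T_1=\sum_{i,t}X_{it}^4$ and show it is $\alpha/4$-stable; dismiss the cross-product $T_4$ by the $L^2$ orthogonality/Chebyshev bound $\var(T_4)\asymp n^2p^2=o(a_{np}^8)$; and control the two quadratic forms $E_n^{(1)},E_n^{(2)}$ by truncation of $W_{ti}=X_{ti}^2-1$ at a level slightly above $a_{np}^2$ followed by a variance bound and Karamata. The one loose point is that you truncate without recentering, so $\bar W_{ti}=W_{ti}\1_{\{|W_{ti}|\le M_n\}}$ has $\E[\bar W]\asymp M_n^{1-\alpha/2}\ne 0$; hence (a) $\E[\bar E_n^{(1)}]\asymp np^2\E[\bar W]^2$ is nonzero and must be shown to be $o(a_{np}^4)$ before Chebyshev can be applied, and (b) the $\sim np^3$ cross terms $\Cov(\bar W_{ti}\bar W_{tj},\bar W_{ti}\bar W_{tj'})=\E[\bar W]^2\bigl(\E[\bar W^2]-\E[\bar W]^2\bigr)$, $j\ne j'$, no longer vanish and are absent from your bound $\var(\bar E_n^{(1)})\lesssim np^2M_n^{4-\alpha}$. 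Both turn out to be harmless here (the $p^3$ contribution is dominated by the $p^2$ one since $p\,M_n^{-\alpha/2}\asymp n^{-1}\to 0$), but you should make these checks explicit; the paper sidesteps them by truncating \emph{and then centering} ($\eta_{it}:=\bar X_{it}^2-\E[\bar X_{it}^2]$), which makes $\E[\eta]=0$ so the cross terms vanish identically.
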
 
In the proof of Theorem~\ref{thm:non-clt} we show that
\begin{align*}
 \frac{n^2}{a_{np}^4}\tr(\S^2)-\frac{2n(n+p-2)}{a_{np}^4}\tr(\S)+\frac{np(n+p-2)}{a_{np}^4}=   \frac{1}{a_{np}^4}\sum_{i=1}^p\sum_{t=1}^n X_{it}^4+ o_{\P}(1)\,.
\end{align*}
Noticing that $\sum_{i,t} X_{it}^4$ is a sum of iid  regularly varying random variables with index $\alpha/4\in(1/2,1)$, we obtain an $\alpha/4$-stable limit distribution after proper normalization \cite{samorodnitsky:taqqu:1994}. 

\begin{remark}\label{rem:np}{\em 
 (a) In some cases it is possible to replace $\tr(\S)$ by its expectation $\E[\tr(\S)]=p$ in \eqref{eq:nonclt}.  For example,  if $\lim_{\nto} p/n \in(0,\infty)$, we get for $\delta>0$
    \begin{align*}
       \P\Big(\Big|\frac{2n(n+p-2)}{a_{np}^4}\big(\tr(\S)-\E[\tr(\S)]\big)\Big|>\delta\Big)&= \P\Big(\Big|\frac{2(n+p-2)}{a_{np}^4}\sum_{i=1}^p\sum_{t=1}^n (X_{it}^2-1)\Big|>\delta\Big)\\
       &\sim np\,\P\Big(|X^2-1|>\tfrac{\delta a_{np}^4}{2(n+p-2)}\Big) \to 0\,, \qquad \nto\,,
    \end{align*}
   where Theorem A1 of \cite{davis2016asymptotic} and the fact that $X^2-1$ is regularly varying with index $\alpha/2$ were used for the asymptotic equivalence in the last line.\\   
(b) For $\alpha=4$ the limit in \eqref{eq:nonclt} does not hold in general, which we shall illustrate in the case $\E[X^4]<\infty$. As $\tr(\S)$ is a sum of iid random variables with finite variance, it satisfies a CLT. Furthermore, Theorem~\ref{thm:clt} states that $\tr(\S^2)$ is also asymptotically normal. Along the lines of the proof of Theorem~\ref{thm:clt} one can show joint asymptotic normality of $\tr(\S)$ and $\tr(\S^2)$. For the sake of brevity we omit a proof, but we mention that in the special case $\lim_{\nto} p/n \in(0,\infty)$ joint asymptotic normality of $\tr(\S)$ and $\tr(\S^2)$ was established in \cite[Lemma~2.2]{wang:yao:2013}.
}\end{remark}

\subsection{Joint limiting distribution of point process and Frobenius norm of the sample covariance matrix}\label{sec:3}
In this subsection we are interested in the joint limiting distribution of $Z_n$ in \eqref{clt} and the point process $N_n$ in \eqref{defpp}.
For this purpose we start by giving a definition of asymptotic independence; c.f.\  \cite[p.~284]{hu2009joint}.
For the Poisson random measure $N$ defined in \eqref{eq:representationN} we will need the collection of sets
\begin{align*}
\mathcal{B}_N:=\{B\; \text{bounded Borel set}: N(\partial B)=0\},
\end{align*}
where $\partial B$ is the boundary of $B$. 


\begin{definition}\label{def:2.5}
Let $(Y_n)_n$ be a sequence of real-valued random variables, which converges to the random variable $Y$ in distribution. Additionally, let $(N_n)_n$ be a sequence of point processes on $\R$, which converges to the point process $N$ in distribution. We call $(Y_n)_n$ and $(N_n)_n$ asymptotically independent, if and only if for every $y\in\R$, $B_1,\ldots, B_k\in\mathcal{B}_N$ and $l_1,\ldots, l_k\in\N_0:=\N\cup \{0\}$
\begin{align*}
\lim\limits_{n\to\infty}\P(Y_{n}\leq y,\, N_{n}(B_1)\leq l_1,\ldots,\,N_{n}(B_k)\leq l_k)=\P({Y}\leq y)\P({N}(B_1)\leq l_1,\ldots,\, {N}(B_k)\leq l_k).
\end{align*}
\end{definition}
Now we state our main result about the joint convergence of $(Z_n,N_n)$.
\begin{theorem}\label{thm:ind}
Assume that there exist $s\ge 4$ and $\varepsilon>0$ \st\ $\E[|X|^{s+\varepsilon}]< \infty$ and let $p=p_n\to\infty$ satisfy $p=O(n^{(s-2)/4})$, as $\nto$.
 Then it holds for the standardized traces $(Z_n)_n$ defined in \eqref{clt} and the point processes $(N_n)_n$ in \eqref{defpp} that
\begin{align*}
(Z_n,N_n)\cid (Z,N)\,, \qquad \nto\,,
\end{align*}
where $Z\sim \mathcal{N}(0,1)$, $N$ is a Poisson random measure with mean measure $\mu(x,\infty)=\e^{-x}, x\in\R$, and $Z$ and $N$ are independent.
\end{theorem}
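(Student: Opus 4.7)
The plan is to reduce joint convergence of $(Z_n,N_n)$ to a factorization of the joint characteristic functional, via an extension of Kallenberg's theorem. Classical Kallenberg states that, for a Poisson limit, $N_n\cid N$ is equivalent to $\E[N_n(B)]\to\mu(B)$ and $\P(N_n(B)=0)\to \e^{-\mu(B)}$ for $B$ in a suitable class of bounded Borel sets. The joint version I would first establish asserts that $(Z_n,N_n)\cid (Z,N)$ with $Z$ independent of $N$ follows from the marginal convergences in Theorems~\ref{thm:clt} and~\ref{lem:pp} together with
\[
\E\!\left[\e^{{\rm i}\theta Z_n}\mathbf{1}_{\{N_n(B_1)=0,\ldots,N_n(B_m)=0\}}\right]\longrightarrow \E[\e^{{\rm i}\theta Z}]\cdot \exp\!\Big(-\sum_{j=1}^m\mu(B_j)\Big)
\]
for all $\theta\in\R$, $m\ge 1$, and pairwise disjoint $B_1,\ldots,B_m\in\mathcal{B}_N$; the equivalence of this convergence with the formulation in Definition~\ref{def:2.5} is a standard consequence of the factorial-moment characterization of simple point processes.

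Second, to verify this joint void-probability factorization I would expand
\[
\mathbf{1}_{\{N_n(B)=0\}}=\prod_{i<j}\bigl(1-\mathbf{1}_{E_{ij}(B)}\bigr),\qquad E_{ij}(B):=\{d_p(\sqrt{n}S_{ij}-d_p)\in B\},
\]
and apply inclusion--exclusion. The task then reduces to showing, for every fixed $k\ge 1$,
\[
\sum_{\substack{\{(i_1,j_1),\ldots,(i_k,j_k)\}\\\text{pairwise distinct}}}\!\!\E\!\Big[\e^{{\rm i}\theta Z_n}\prod_{\ell=1}^k\mathbf{1}_{E_{i_\ell j_\ell}(B)}\Big]\longrightarrow \E[\e^{{\rm i}\theta Z}]\cdot\frac{\mu(B)^k}{k!},
\]
together with a uniform-in-$k$ tail estimate (to interchange summation and limit) obtainable from factorial-moment bounds on $N_n(B)$ already present in the proof of Theorem~\ref{lem:pp}. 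Since the convergence of the analogous sums of $\P\big(\bigcap_\ell E_{i_\ell j_\ell}(B)\big)$ to $\mu(B)^k/k!$ is just the factorial-moment reformulation of Theorem~\ref{lem:pp}, the truly new content is the asymptotic decoupling
\[
\E\!\Big[\e^{{\rm i}\theta Z_n}\prod_{\ell=1}^k\mathbf{1}_{E_{i_\ell j_\ell}(B)}\Big]=\E[\e^{{\rm i}\theta Z_n}]\,\P\!\Big(\bigcap_{\ell=1}^k E_{i_\ell j_\ell}(B)\Big)\cdot(1+o(1)),
\]
which must hold uniformly over admissible collections of pairs.

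The main obstacle is this decoupling, which I would attack by localizing the conditioning to a bounded number of rows of the data matrix. The event $\bigcap_\ell E_{i_\ell j_\ell}(B)$ depends only on the rows of $(\mathbf{x}_t)_{t=1}^n$ indexed by $V:=\{i_1,j_1,\ldots,i_k,j_k\}$, whose cardinality is at most $2k$. Writing
\[
\tr(\S^2)=\tr\bigl((\S^{(V)})^2\bigr)+R_n^V,
\]
where $\S^{(V)}$ is the $(p-|V|)\times(p-|V|)$ sample covariance matrix built from the rows of the data matrix outside $V$ and $R_n^V$ gathers every summand $S_{ij}^2$ with $\{i,j\}\cap V\neq\emptyset$, the first term on the right-hand side is independent of $\bigcap_\ell E_{i_\ell j_\ell}(B)$. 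It then suffices to show that, after appropriate centering, $R_n^V=o_\P(\sigma_n)$ uniformly over $V$ with $|V|\le 2k$; Theorem~\ref{thm:clt} applied to $\S^{(V)}$ (with $p$ replaced by $p-|V|=p(1+o(1))$) then guarantees that $Z_n$ remains asymptotically standard normal conditionally on $\bigcap_\ell E_{i_\ell j_\ell}(B)$, yielding the decoupling. The quantitative control of $R_n^V$ is the technical heart: I would split it into diagonal contributions $\sum_{i\in V}S_{ii}^2$, cross contributions $2\sum_{i\in V,\,j\notin V}S_{ij}^2$, and purely intra-$V$ off-diagonal contributions, and estimate the variance of each centered block via moment bounds that exploit $\E[|X|^{s+\varepsilon}]<\infty$ with $s\ge 4$ and the growth condition $p=O(n^{(s-2)/4})$ to ensure the combined variance is of order $o(\sigma_n^2)$. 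Combining the decoupling with the reduction in the first paragraph and summing the inclusion--exclusion series then produces the joint limit $(Z,N)$ with independent coordinates.
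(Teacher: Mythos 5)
Your high-level plan coincides with the paper's: an extension of Kallenberg's void-probability criterion to joint convergence (the paper's Theorem~\ref{thm:kal}), followed by inclusion--exclusion over the events $E_{ij}(B)$, and a decoupling in which one removes from $Z_n$ exactly those summands $S_{ij}^2$ with $\{i,j\}\cap V\neq\emptyset$ (the paper's $\bar Z_{n,d}$ with $\Lambda_{n,d}$). The use of characteristic functions in place of distribution functions is a cosmetic variant. But the quantitative step on which the whole argument hinges is not correctly set up, and variance bounds cannot close it.

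\smallskip
\textbf{The decoupling gap.} You assert that a uniform $o_{\P}(\sigma_n)$ bound on $R_n^V$ (i.e.\ $\max_V\P(|R_n^V-\E R_n^V|/\sigma_n>\delta)\to 0$) gives a multiplicative $(1+o(1))$ factorization
$\E\big[\e^{{\rm i}\theta Z_n}\prod_\ell\mathbf 1_{E_{i_\ell j_\ell}(B)}\big]=\E[\e^{{\rm i}\theta Z_n}]\,\P\big(\bigcap_\ell E_{i_\ell j_\ell}(B)\big)(1+o(1))$. This inference is not valid. Writing $Z_n=Z_n^{(V)}+\tilde R_n^V$ and using independence of $Z_n^{(V)}$ from $(\tilde R_n^V, \bigcap_\ell E_{i_\ell j_\ell})$, the error you need to control is
$\E\big[(\e^{{\rm i}\theta\tilde R_n^V}-1)\,\mathbf 1_{\bigcap_\ell E_{i_\ell j_\ell}}\big]$,
i.e.\ the behaviour of $\tilde R_n^V$ \emph{conditionally on} an event whose probability is $\asymp p^{-2k}$. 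Uniform unconditional smallness says nothing about the conditional law on such a rare event, so the multiplicative $(1+o(1))$ claim is unsubstantiated. The paper does not attempt a multiplicative bound: it works with Bonferroni and an \emph{additive, unconditional} error $\P(|\bar Z_{n,d}|>\delta)$ (via Lemma~\ref{lem:hilf}), and then the proof obligation becomes $\sum_{I_1<\cdots<I_d}\P(|\bar Z_{n,d}|>\delta)\to 0$, a sum over roughly $p^{2d}$ collections.

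\smallskip
\textbf{Variance is not enough.} Even in that additive framework, "estimate the variance of each centered block" cannot succeed. One has $\Var(\bar Z_{n,d})\lesssim d^2/n$, so Chebyshev gives $\P(|\bar Z_{n,d}|>\delta)\lesssim d^2/(n\delta^2)$; multiplied by the $\binom{p(p-1)/2}{d}\lesssim p^{2d}\lesssim n^{(s-2)d/2}$ collections this is $\lesssim d^2 n^{(s-2)d/2-1}/\delta^2$, which diverges for every $d\ge 1$ when $s\ge 4$. To beat the combinatorial factor the paper truncates $X$, applies Markov's inequality with an even exponent $\tau$ chosen roughly as the smallest even integer above $\frac{\tilde s(s-2)}{\tilde s-4}d+\frac{4\tilde s}{\tilde s-4}$ (so $\tau$ grows linearly in $d$), and controls $\E[|\bar Z_{n,d}|^\tau]$ via the Marcinkiewicz--Zygmund inequality combined with careful bookkeeping of the moments of the truncated variable. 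That yields $\max_{I_1,\dots,I_d}\P(|\bar Z_{n,d}|>\delta)=o(n^{-(s-2)d/2-1})$ and hence \eqref{wsk0}. Your proposal omits both the truncation and the high-moment analysis, which are the technical core of the argument; without them the sum over collections cannot be brought to zero under $p=O(n^{(s-2)/4})$ and $\E[|X|^{s+\vep}]<\infty$ with $s\ge 4$.
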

Theorem~\ref{thm:ind} only requires the conditions of Theorems~\ref{lem:pp} and \ref{thm:clt} which are both formulated under minimal assumptions, c.f.\ \cite{heiny2023maximum}.  
Note that if $\E[X^4]=\infty$ and consequently $\tr(\S^2)$ does not converge to the normal distribution (see Theorem~\ref{thm:non-clt}), our methods in the proof of Theorem~\ref{thm:ind} cease to work. Thus the joint limit behavior of $N_n$ and $\tr(\S^2)$ is still an open problem in the case of regularly varying $X$ with index $\alpha\in(2,4)$. In this case the dominating part of $n^2 \tr(\S^2)$ is $\sum_{i=1}^p\sum_{t=1}^nX_{it}^4$ and hence the sum of iid random variables which are regularly varying with index $\alpha/4$. In \cite{chow1978sum} it is shown that this sum is not asymptotically independent of the maximum of the $X_{it}^4$.  Therefore, we conjecture that $Z_n$ and $N_n$ will not be asymptotically independent anymore. 

As a consequence of Theorem~\ref{thm:ind} we obtain the asymptotic independence of $Z_n$ and a fixed number of upper order statistics of the random variables $(d_p(\sqrt{n}S_{ij}-d_p))_{1\leq i<j\leq p}$, which we denote by
\begin{equation}\label{eq:ordered}
G_{n,(1)}\ge G_{n,(2)}\ge \cdots \ge G_{n,(p(p-1)/2)}\,.
\end{equation}
\begin{corollary}\label{cor:2.8}
  Let $Z\sim\mathcal{N}(0,1)$ be independent of  an iid sequence $(E_i)_{i\ge 1}$ of standard exponentially distributed random variables and set $\Gamma_i:=E_1+\ldots+E_i$. 
  Under the conditions of Theorem \ref{thm:ind} and for fixed $k\ge 1$ it holds
\begin{align*}
\lim_{\nto}\P(Z_n\leq y, G_{n,(1)}\leq x_1,\ldots,G_{n,(k)}\leq x_k)= \P(Z\leq y)\P(-\log \Gamma_1\leq x_1,\ldots, -\log \Gamma_k\leq x_k), 
\end{align*}
where $y,x_1,\ldots, x_k\in\R$. 
\end{corollary}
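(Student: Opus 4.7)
The plan is to deduce the corollary from Theorem~\ref{thm:ind} by translating the joint event on the $k$ largest order statistics into a joint event on $N_n$ evaluated on half-infinite intervals. Writing $B_j:=(x_j,\infty)$ for $j=1,\ldots,k$, I would use the elementary identity
\begin{align*}
\{G_{n,(j)}\le x_j\}=\{N_n(B_j)\le j-1\},
\end{align*}
valid for each individual $j$, since $G_{n,(j)}\le x_j$ is exactly the statement that at most $j-1$ of the points of $N_n$ exceed $x_j$. Intersecting over $j$ gives
\begin{align*}
\{G_{n,(1)}\le x_1,\ldots,G_{n,(k)}\le x_k\}=\bigcap_{j=1}^{k}\{N_n(B_j)\le j-1\},
\end{align*}
so the probability on the left-hand side of the claim equals $\P\bigl(Z_n\le y,\ N_n(B_j)\le j-1,\ j=1,\ldots,k\bigr)$.

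Each $B_j$ is bounded in the state space $(-\infty,\infty]$ relevant for the Gumbel-type Poisson limit, on which the mean measure $\mu(dx)=e^{-x}\,dx$ is locally finite, and its boundary $\{x_j\}$ satisfies $N(\partial B_j)=0$ almost surely because $\mu$ is non-atomic. Hence $B_j\in\mathcal{B}_N$, and Theorem~\ref{thm:ind} together with Definition~\ref{def:2.5}, applied with these sets and the levels $l_j:=j-1$, yields
\begin{align*}
\P\bigl(Z_n\le y,\ N_n(B_j)\le j-1,\ j=1,\ldots,k\bigr)\longrightarrow \P(Z\le y)\,\P\bigl(N(B_j)\le j-1,\ j=1,\ldots,k\bigr).
\end{align*}

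Finally, I would translate the limiting event back to the variables $-\log\Gamma_j$ via the representation~\eqref{eq:representationN}. Since $\Gamma_i$ is strictly increasing, the points of $N$ can be listed as $-\log\Gamma_1>-\log\Gamma_2>\cdots$ almost surely, so $-\log\Gamma_j$ is the $j$-th largest point of $N$; the same exceedance identity used above then gives $\{N(B_j)\le j-1\}=\{-\log\Gamma_j\le x_j\}$. Substituting into the previous display and using independence of $Z$ and $(\Gamma_i)_i$ (from Theorem~\ref{thm:ind}) proves the claim. I do not anticipate any serious obstacle: the corollary is a direct unpacking of Theorem~\ref{thm:ind} through the standard dictionary between exceedance counts of a point process and its ordered statistics. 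The only mild subtlety is verifying that the half-line test sets $B_j$ qualify as admissible in Definition~\ref{def:2.5}, which is immediate from the conventional compactification of the state space in point-process extreme value theory.
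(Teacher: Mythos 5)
Your argument is essentially the same as the paper's: rewrite the event $\{G_{n,(j)}\le x_j\}$ as the exceedance-count event $\{N_n(x_j,\infty)\le j-1\}$ for each $j$, apply Theorem~\ref{thm:ind} with the half-infinite intervals, and then translate the limiting event back through the representation $N=\sum_i\vep_{-\log\Gamma_i}$. Your extra remark checking that the sets $(x_j,\infty)$ qualify as admissible in the compactified state space (non-atomicity of $\mu$, finiteness of $\mu(x_j,\infty)=e^{-x_j}$) is a small but welcome addition to a detail the paper leaves implicit.
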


\begin{proof}
Since $N_n(x,\infty)$ is the number of pairs $(i,j)$ with $1\le i<j\le p$, for which $(d_p(\sqrt{n}S_{ij}-d_p))\in (x,\infty)$, we get by Theorem~\ref{thm:ind} 
\begin{align*}
&\P(Z_n\leq y, G_{n,(1)}\leq x_1,\ldots,G_{n,(k)}\leq x_k)\\
&=\,\P\Big(Z_n\leq y, N_n(x_1,\infty)= 0, N_n(x_2,\infty)\le 1, \ldots,N_n(x_k,\infty)\leq k-1\Big)\\
&\to \P(Z\leq y)\,\P\Big(N(x_1,\infty)= 0, N(x_2,\infty)\le 1, \ldots,N(x_k,\infty)\leq k-1\Big)\,, \qquad \nto\,.
\end{align*}
In view of the representation $N\eid \sum_{i=1}^\infty\varepsilon_{-\log \Gamma_i}$
we obtain
\begin{align*}
   \P\Big(N(x_1,\infty)= 0,\ldots,N(x_k,\infty))\leq k-1\Big)= \P(-\log \Gamma_1\leq x_1,\ldots, -\log \Gamma_k\leq x_k),
\end{align*}
which proves the corollary.
\end{proof}

\subsection{Main challenges in the proof of Theorem~\ref{thm:ind}} \label{sec:strategy}

In this subsection we describe the key challenges in the proof of Theorem~\ref{thm:ind} and our novel technical contributions. 

The distribution of a point process $N_n$ is determined by the family of the distributions of the finite-dimensional random vectors 
$(N_n(B_1),\ldots, N_n(B_k))$ for any choice of suitable Borel sets $B_1,\ldots, B_k$; see \cite[Proposition~6.2.III]{daley:verejones:1988}.
The collection of these distributions is called the finite-dimensional distributions of $N_n$. Due to the dependence of the $(S_{ij})$ a direct analysis of the finite-dimensional distributions of $N_n$ is intractable. The same applies to the Laplace functional of $N_n$ which determines the distribution of $N_n$ completely and can be seen as a similar tool for a point process as the characteristic function for a real-valued random variable.

Fortunately, Kallenberg proved a sufficient condition for the weak convergence of a sequence of point processes $N_n$ towards $N$, which is often much easier to verify than the convergence of the finite-dimensional distributions or the Laplace functionals. More precisely, he showed that if $N$ is a simple point process (such as \eqref{eq:representationN}), then it is enough to ensure that $\E[N_n(I)]$ converges to $\E[N(I)]$ for any $I\in\mathcal{J}$ and that the probability of the event $\{N_n(U)=0\}$ converges to the probability of the event $\{N(U)=0\}$ for any $U\in\mathcal{U}$; see, for instance, \cite[p.~35, Theorem 4.7]{kallenberg:1983} or \cite[p.~233, Theorem 5.2.2]{embrechts:kluppelberg:mikosch:1997}. 
We define $\mathcal{U}$ as the set of finite unions of intervals and $\mathcal{J}$ as the set of intervals \mbox{in $\R$.} 

Therefore, instead of showing the convergence of the random vector $(N_n(B_1),\ldots,N_n(B_k))$ for any $k\geq 1$ and  $B_1,\ldots,B_k\in\mathcal{B}_N$, it is enough to prove the convergence of the probability of the occurrence of points in finite unions of intervals, which often greatly simplifies the proof. Our Theorem~\ref{thm:kal} below contains a similarly helpful tool, which will be essential for studying the joint asymptotic distribution of $Z_n$ and the point process $N_n$.
\begin{theorem}[Extension of Kallenberg's Theorem]\label{thm:kal}
Let $(Y_n)_n$ be a sequence of real-valued random variables converging in distribution to a random variable $Y$. In addition, let $N$ be a simple point process on $\R$ independent of $Y$ 
and let $(N_n)_n$ be a sequence of point processes. 
If the following two conditions
\begin{enumerate}
\item[(K1)] $\limsup\limits_{n\to\infty} \E[N_n(I)]\leq \E[N(I)]$,\quad $I\in\mathcal{J}$,\\
\item[(K2)] $\lim\limits_{n\to\infty} \P(Y_n\leq y,\,N_n(U)=0)=\P(Y\leq y)\P(N(U)=0)$,\quad $y\in\R,\,U\in\mathcal{U}$
\end{enumerate}
hold, then $N_n\cid N$ and $(N_n)_n$ and $(Y_n)_n$ are asymptotically independent.
\end{theorem}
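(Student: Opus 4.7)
The plan is to first recover $N_n\cid N$ from Kallenberg's classical theorem and then upgrade (K2) to joint convergence of $(Y_n,N_n)$ by expressing the test events $\{Y_n\le y,\,N_n(B_j)\le l_j \text{ for all } j\}$ as finite signed combinations of the joint avoidance events $\{Y_n\le y,\,N_n(U)=0\}$ controlled by (K2). The first step is immediate: letting $y\to\infty$ in (K2) yields $\lim_n\P(N_n(U)=0)=\P(N(U)=0)$ for every $U\in\mathcal{U}$, and together with (K1) this is exactly the hypothesis of Kallenberg's classical theorem for simple point processes \cite[Theorem~4.7]{kallenberg:1983}, giving $N_n\cid N$.

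For asymptotic independence in the sense of Definition~\ref{def:2.5}, fix $y\in\R$, $B_1,\dots,B_k\in\mathcal{B}_N$ and $l_1,\dots,l_k\in\N_0$. Using $N(\partial B_j)=0$, it suffices to treat bounded intervals $B_j$. For $\epsilon>0$ I partition a bounded set containing $\bigcup_j B_j$ into intervals $I_1,\dots,I_M$ of length $<\epsilon$, with endpoints avoiding the at most countable atomic support of the mean measure of $N$ and chosen so that each $B_j$ is a finite union of the $I_m$. On the no-collision event $A_\epsilon^{(n)}:=\bigcap_m\{N_n(I_m)\le 1\}$ one has $N_n(B_j)=|\{m:I_m\subset B_j,\,N_n(I_m)=1\}|$, so $\bigcap_j\{N_n(B_j)\le l_j\}\cap A_\epsilon^{(n)}$ decomposes into a finite disjoint union of events that, for each $m$, specify whether $N_n(I_m)=0$ or $N_n(I_m)\ge 1$. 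The identity $\1\{N_n(I_m)\ge 1\}=1-\1\{N_n(I_m)=0\}$ together with $\{N_n(I)=0\}\cap\{N_n(I')=0\}=\{N_n(I\cup I')=0\}$ expands the indicator of this event as a deterministic signed linear combination of indicators $\1\{N_n(U)=0\}$ for finitely many $U\in\mathcal{U}$ depending only on $\epsilon$ and the partition.

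Multiplying through by $\1\{Y_n\le y\}$, taking expectations, applying (K2) termwise, and running the same algebraic identity for the independent limit pair $(Y,N)$ (for which $\P(Y\le y,N(U)=0)=\P(Y\le y)\P(N(U)=0)$ holds by construction) yields
\begin{align*}
\lim_{n\to\infty}\P\big(Y_n\le y,\,N_n(B_j)\le l_j\text{ for all }j,\,A_\epsilon^{(n)}\big)=\P(Y\le y)\,\P\big(N(B_j)\le l_j\text{ for all }j,\,A_\epsilon^{(N)}\big).
\end{align*}
It remains to remove $A_\epsilon^{(n)}$ and $A_\epsilon^{(N)}$ by letting $\epsilon\to 0$. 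Simplicity of $N$ on the bounded region forces $\P((A_\epsilon^{(N)})^c)\to 0$, and $\limsup_n\P((A_\epsilon^{(n)})^c)\le \P((A_\epsilon^{(N)})^c)$ follows from $N_n\cid N$ and the Portmanteau theorem in the vague topology, the choice of partition endpoints ensuring that $\{m(I_m)\ge 2\}$ is a continuity set of the law of $N$. Sending first $n\to\infty$ and then $\epsilon\to 0$ completes the argument.

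The main obstacle is this final transfer of smallness for the collision probability from the limit $N$ to the pre-limit $N_n$. Simplicity is a boundary-type property in the vague topology, so one must carefully argue that the ``at most one point per $I_m$'' event for $N_n$ is asymptotically as likely as that for $N$; this is exactly why the partition endpoints have to be chosen outside the countable set of points charged by the mean measure of $N$. Every other ingredient (the inclusion--exclusion decomposition and the passage through (K2)) is algebraic and, once this control is in place, goes through without difficulty.
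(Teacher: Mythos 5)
Your proof is correct in spirit but takes a genuinely different route from the paper. The paper adapts Kallenberg's original proof almost verbatim: it extracts a subsequential weak limit $(\tilde Y,\tilde N)$ by tightness, shows via a Dynkin-class argument that the law of $(Y,N)$ and of $(\tilde Y,\tilde N)$ agree on the avoidance $\sigma$-field $\sigma(\mathcal C)$, transfers this to the simplification $\tilde N^*$ by Kallenberg's measurability lemmas, and finally uses (K1) to squeeze $\E[\tilde N(I)]=\E[\tilde N^*(I)]=\E[N(I)]$ so that $\tilde N$ is a.s.\ simple. Your route is more hands-on: you re-derive $N_n\cid N$ from the classical theorem (using that $y\to\infty$ in (K2) recovers avoidance convergence and that (K1) is exactly the $\limsup$ condition Kallenberg needs), then control $(Y_n,N_n(B_1),\dots,N_n(B_k))$ directly by partitioning, passing to the no-collision event, and expanding via inclusion--exclusion into terms $\P(Y_n\le y,\,N_n(U)=0)$ that (K2) handles. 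The Dynkin-class argument is replaced by explicit combinatorics; the role of (K1) and simplicity of $N$, which in the paper kills multiplicities in the subsequential limit, is in your argument absorbed into the fact that $\P\big((A_\epsilon^{(N)})^c\big)\to 0$ and $\limsup_n\P\big((A_\epsilon^{(n)})^c\big)\le\P\big((A_\epsilon^{(N)})^c\big)$. Your approach is more elementary and arguably more transparent.

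Two points you should tighten. First, the claim ``it suffices to treat bounded intervals $B_j$'' deserves a sentence: what you actually prove is joint convergence of $(Y_n,N_n(I_1),\dots,N_n(I_M))$ for intervals $I_m$ in a dissecting ring, which together with tightness yields $(Y_n,N_n)\cid(Y,N)$ in $\R\times M_p(\R)$, and then the general $B_j\in\mathcal B_N$ follow by the continuous mapping theorem since $N(\partial B_j)=0$ a.s. Second, the indicator of $\{N_n(B_j)\le l_j\ \forall j\}\cap A_\epsilon^{(n)}$ does \emph{not} literally decompose into $\pm\1\{N_n(U)=0\}$ terms, because $\1_{A_\epsilon^{(n)}}$ itself is not expressible in those terms. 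What decomposes is the indicator of the union $E_n$ of configuration events in which each $I_m$ is declared either empty or nonempty, and $E_n\triangle\big(\{N_n(B_j)\le l_j\ \forall j\}\cap A_\epsilon^{(n)}\big)\subset (A_\epsilon^{(n)})^c$; you then apply (K2) to $\P(Y_n\le y,\,E_n)$ and absorb the discrepancy into the collision bound. With these two adjustments the argument is complete and constitutes a valid alternative proof.
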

Theorem~\ref{thm:kal} essentially shows that the sequence of random variables $Y_n$ and a sequence of point processes $N_n$ are asymptotically independent if the events $\{Y_n\leq y\}$ and $\{N_n(U)=0\}$ are asymptotically independent for any $y\in\R$ and  $U\in\mathcal{U}$. Since Theorem~\ref{thm:kal} requires mild assumptions on the real-valued random variables $Y_n$ and the point processes $N_n$, it is applicable to a wide variety of other settings. Moreover, as seen in  Corollary~\ref{cor:2.8}, it also yields asymptotic independence of the points of $N_n$ and $Y_n$.
\medskip

In Theorem \ref{thm:ind} the joint convergence of the point process $N_n$ of the entries of the sample covariance matrix $\S$ and the standardized Frobenius norm $Z_n$ of $\S$ is considered, which to the best of our knowledge has not previously been studied. We would like to mention that results about the joint convergence of a point process of dependent points and their sum are only available in the Gaussian case \cite{hu2009joint, peng2012joint, tan2011joint}, whose techniques are not applicable to non-Gaussian sequences. 

In view of Definition~\ref{def:2.5}, the main challenge in our case is to show the convergence in distribution of the random vectors $(Z_n, N_n(B_1),\ldots, N_n(B_k))$ for any $k\geq 1$ and $B_1,\ldots, B_k\in\mathcal{B}_N$. Note that every summand of $Z_n$ is dependent on a lot of points of $N_n$.
To overcome this challenge we developed a novel technical tool Theorem~\ref{thm:kal}, which allows us to reduce the convergence in distribution of the random vector above to the following two conditions: 
\begin{enumerate}
    \item[(K1')] $\limsup\limits_{n\to\infty} \E[N_n(I)]=\E[N(I)]$ for any interval $I$,
    \item[(K2')] $\lim\limits_{n\to\infty} \P(Z_n\leq y, N_n(U)=0)= \Phi(y)\P(N(U)=0)$ for any finite union of intervals $U$.
\end{enumerate}
Condition (K1') can be shown through normal approximation to large deviation probabilities. The challenging part is condition (K2'), which controls the dependence between $Z_n$ and $N_n$. 
The advantage of considering $\P(Z_n\leq y, N_n(U)=0)$, respectively $\P(Z_n\leq y, N_n(U)\neq 0)$, instead of probabilities for the vector $(Z_n, N_n(B_1),\ldots, N_n(B_k))$ is that we can write 
\begin{align*}
    \P(Z_n\leq y, N_n(U)\neq 0)=\P\Big(Z_n\leq y, \bigcup_{I\in\Lambda_n} B_I\Big)\,,
\end{align*}
where $B_I=\{d_p(\sqrt{n}S_{ij}-d_p)\in U\}$ for $I=(i,j)\in\Lambda_n=\{(i,j):1\leq i<j\leq p\}$.
Then the Bonferroni bounds yield for $k\geq 1$
\begin{equation}
\sum_{d=1}^{2k}(-1)^{d-1}W_{n,d}\leq\P\Big(Z_n\leq y, \bigcup_{I\in\Lambda_n} B_I\Big)\leq \sum_{d=1}^{2k-1}(-1)^{d-1}W_{n,d}\,,
\end{equation}
where $W_{n,d}:= \sum_{I_1<\ldots<I_d}\P\big(Z_n\leq y, \bigcap_{\ell=1}^dB_{I_\ell}\big)$  is a sum of probabilities of the intersections of finitely many $B_I$'s.
The number of summands of $Z_n$, which are dependent on $B_{I_1},\ldots,B_{I_d}$, is of order $p$. We identify these dependent summands $Z_{n,d}$ and show that they are negligible, i.e.,
\begin{align*}
    \sum_{I_1<\ldots<I_d} \P(|Z_{n,d}|>\delta)\to 0,\qquad n\to\infty.
\end{align*}
The remaining summands $Z_n-Z_{n,d}$ are independent from $B_{I_1},\ldots,B_{I_d}$ and therefore
$$\P\Big(Z_n-Z_{n,d}\leq y, \bigcap_{\ell=1}^dB_{I_\ell}\Big)=\P\big(Z_n-Z_{n,d}\leq y) \,\P\Big( \bigcap_{\ell=1}^dB_{I_\ell}\Big)\,.$$
By first letting $\nto$ and then $k\to \infty$, we can show that 
$$\P(Z_n\leq y, N_n(U)\neq 0) - \P(Z_n\leq y) \, \P( N_n(U)\neq 0) \to 0\,, \qquad \nto\,,$$
from which we deduce (K2'). The detailed proof of  Theorem~\ref{thm:ind} will be presented in Section~\ref{sec:3.4}. 

\subsection{An application to independence testing}\label{sec:test}

Consider a sample $\y_1,\ldots, \y_n$ from the $p$-dimensional population $\bfSigma^{1/2}\x$, where $\bfSigma$ is an (unknown) non-random positive definite $p\times p$ matrix and  $\x$ has iid components with mean zero and variance one. 
The largest off-diagonal entry of the sample covariance matrix $\S=(S_{ij})=\frac{1}{n}\sum_{t=1}^n\y_t \y_t^{\top}$
is a popular statistic for structural tests about properties of $\bfSigma$; we refer to the review paper \cite{cai:2017} for an extensive summary and detailed references.   We are interested in the null hypothesis of independence $H_0$:$\,\bfSigma=\bfI_p$.
In what follows we will present a rather simplistic extension of the classical maximum-type tests as an application of Theorem~\ref{thm:ind}. This application is by no means perfect, it rather serves as an illustration of the potential of the asymptotic independence derived in Theorem~\ref{thm:ind} regarding statistical tests\footnote{A thorough analysis will be topic of future research by the authors.}. 

Under the null hypothesis Theorem~\ref{thm:ind} studies the joint limiting distribution of $(Z_n,N_n)$, where $Z_n$ is given in \eqref{clt} and
$$N_n=\sum_{1\leq i<j\leq p}\varepsilon_{d_p(\sqrt{n}S_{ij}-d_p)}\,.$$
From \eqref{eq:ordered} recall the definition of 
$G_{n,(1)}\ge \cdots \ge G_{n,(p(p-1)/2)}$.
Assuming the conditions of Theorem~\ref{thm:ind}, the asymptotic independence of $Z_n$ and $N_n$ implies for fixed $k\ge 1$ (c.f.\ Corollary~\ref{cor:2.8}) that
\begin{equation}\label{eq:corimp}
\big(Z_n, G_{n,(1)},\ldots,G_{n,(k)}\big) \cid 
\big( Z, -\log \Gamma_1,\ldots, -\log \Gamma_k \big)\,, \qquad \nto\,,
\end{equation}
where $Z\sim\mathcal{N}(0,1)$ is independent of the iid sequence $(E_i)_{i\ge 1}$ of standard exponentially distributed random variables and $\Gamma_i:=E_1+\ldots+E_i$.   
Next we introduce a variety of different statistics:
\begin{equation}
\begin{alignedat}{2}
    T_{1}&:= G_{n,(1)},\quad
    && \text{(rescaled) largest off-diagonal entry of $\bfS$,}\\
    T_{2,k}&:=G_{n,(1)}-G_{n,(k)},
    && \text{gap of largest and $k$-th largest entry,}\\
    T_{3,k}&:=\max_{1\leq i\leq k-1} \big(G_{n,(i)}-G_{n,(i+1)}\big),\quad
    && \text{maximum spacing between consecutive order statistics,}\\
    T_{4,k}&:=\sum_{i=1}^{k-1}(G_{n,(i)}-G_{n,(i+1)})^2,
    && \text{sum of squared spacings between consec.\ order statistics.}
\end{alignedat}
\end{equation}
For the first statistic it holds that $T_{1}\cid -\log \Gamma_1$, which is standard Gumbel distributed with distribution function $\Lambda(x)=\exp(-\e^{-x})$. Recall the well--known fact that 
\begin{align*}
    \Big(\frac{\Gamma_1}{\Gamma_{k+1}},\ldots, \frac{\Gamma_k}{\Gamma_{k+1}} \Big)\stackrel{d}{=}(U_{(k)},\ldots,U_{(1)}),
\end{align*}
where the right-hand vector consists of the order statistics of $k$ iid uniform random variables on $[0,1]$. In combination with \eqref{eq:corimp} we get for the other statistics, as $\nto$,
\begin{align*}
    T_{2,k}&\cid \log(\Gamma_k/\Gamma_1)\eid \log(U_{(1)}/U_{(k)})\,,\\
    T_{3,k}&\cid \max_{1\leq i\leq k-1} \log (\Gamma_{i+1}/\Gamma_i)\eid \max_{1\leq i\leq k-1} \log(U_{(k-i)}/U_{(k-i+1)})\,,\\
    T_{4,k}&\cid \sum_{i=1}^{k-1} (\log (\Gamma_{i+1}/\Gamma_i))^2\eid \sum_{i=1}^{k-1}(\log(U_{(k-i)}/U_{(k-i+1)}))^2\,.
\end{align*}
Next we construct the random variables
\begin{align*}
    P_{T_{1}}&:=1-\Lambda(T_{1,k})\,,\\ P_{T_{i,k}}&:=1-F_{i,k}(T_{i,k})\,, \qquad  i=2,3,4,\\
    P_{Z_n}&:=1-\Phi(Z_n)\,,
\end{align*}
where $F_{2,k}$, $F_{3,k}$, $F_{4,k}$ are the distribution functions of $\log(U_{(1)}/U_{(k)})$, $\max_{1\leq i\leq k-1} \log(U_{(k-i)}/U_{(k-i+1)})$ and $\sum_{i=1}^{k-1}(\log(U_{(k-i)}/U_{(k-i+1)}))^2$, respectively, and $\Phi$ denotes the standard normal distribution function. 
By \eqref{eq:corimp}, $P_{Z_n}$ is asymptotically independent of $P_{T_1}, P_{T_{2,k}}, P_{T_{3,k}}, P_{T_{4,k}}$. Additionally, each of these random variables converges to the uniform distribution on $[0,1]$. 

We propose the following four test statistics 
\begin{equation*}
    \mathcal{T}_{1,n}:=\min\{P_{Z_n},P_{T_{1}}\} \quad \text{ and } \quad 
    \mathcal{T}_{i,n}:=\mathcal{T}^{(k)}_{i,n}:=\min\{P_{Z_n},P_{T_{i,k}}\}\quad \text{ for } i\in\{2,3,4\}\,.
\end{equation*}
The null hypothesis $H_0$ is rejected by test $i\in\{1,\ldots,4\}$, whenever 
 \begin{align}	 \label{hd5}	
\mathcal{T}_{i,n}<1-\sqrt{1-\beta}\,.
	\end{align} 
The next result establishes the asymptotic distribution of $\mathcal{T}_{i,n}$ from which we deduce that the tests in \eqref{hd5} have asymptotic level $\beta\in (0,1)$.
\begin{corollary}\label{cor:2.10}
    Under the conditions of Theorem \ref{thm:ind}, it holds that
    $$\mathcal{T}_{i,n} \cid \min(U,V)\,, \qquad \nto\,, i=1,\ldots,4\,,$$
    where $U$ and $V$ are independent random variables uniformly distributed on $[0,1]$. 
\end{corollary}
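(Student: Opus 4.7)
The plan is a clean chain of three applications of the continuous mapping theorem (CMT), with the crucial input being the joint convergence \eqref{eq:corimp} that follows from Theorem~\ref{thm:ind} (via Corollary~\ref{cor:2.8}). The key structural observation is that \eqref{eq:corimp} delivers not only joint limits but also the independence of the limiting $Z$ from the $\Gamma_j$'s; this independence must be propagated through the subsequent transformations all the way to the final minimum.

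The first step is to note that each statistic $T_1, T_{2,k}, T_{3,k}, T_{4,k}$ is a continuous function of the ordered vector $(G_{n,(1)},\ldots,G_{n,(k)})$. Applying the CMT to \eqref{eq:corimp}, and writing $T_1^\ast := -\log\Gamma_1$ together with $T_i^\ast$ ($i=2,3,4$) for the corresponding limiting functionals of $(\Gamma_1,\ldots,\Gamma_k)$ already identified in Section~\ref{sec:test}, one obtains
\[
(Z_n, T_1) \cid (Z, T_1^\ast), \qquad (Z_n, T_{i,k}) \cid (Z, T_i^\ast), \quad i=2,3,4,
\]
with $Z$ independent of the second coordinate in every case. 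The second step is to apply the CMT to the continuous map $(x,y)\mapsto(1-\Phi(x), 1-F(y))$, where $F\in\{\Lambda, F_{2,k}, F_{3,k}, F_{4,k}\}$. Continuity of $1-\Phi$ is clear, and continuity of each $F$ follows from the smoothness of the defining functional together with the absolute continuity of $(\Gamma_1,\ldots,\Gamma_{k+1})$. This yields
\[
(P_{Z_n}, P_{T_{i,k}}) \cid \bigl(1-\Phi(Z),\, 1-F_{i,k}(T_i^\ast)\bigr),
\]
and likewise for $(P_{Z_n}, P_{T_1})$. By the probability integral transform each coordinate of the limit is $\mathrm{Unif}[0,1]$, and the independence of $Z$ and $T_i^\ast$ passes to the pair; denote the two independent uniform limits by $U$ and $V$.

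The third and final step is one further CMT application, this time to the continuous map $(x,y)\mapsto \min(x,y)$, yielding $\mathcal{T}_{i,n} \cid \min(U,V)$, as claimed. The main ``obstacle'' is really not one: the entire non-trivial content has been absorbed into Theorem~\ref{thm:ind}, so the corollary is pure post-processing. The only technical item worth a moment's attention is the continuity of $F_{2,k}, F_{3,k}, F_{4,k}$, which amounts to verifying that the defining functionals are not constant on sets of positive probability; this is immediate since each is a real-analytic function of $(\Gamma_1,\ldots,\Gamma_{k+1})$ on its natural domain and hence has an absolutely continuous law.
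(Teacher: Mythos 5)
Your proposal is correct and follows essentially the same route as the paper: the paper does not give a displayed proof of Corollary~\ref{cor:2.10}, but justifies it through exactly the chain you describe in the text of Section~\ref{sec:test} — the joint convergence \eqref{eq:corimp} from Theorem~\ref{thm:ind}/Corollary~\ref{cor:2.8}, the continuous mapping theorem for the statistics $T_1,T_{2,k},T_{3,k},T_{4,k}$, the probability integral transform giving asymptotically independent uniforms $P_{Z_n}$ and $P_{T_{i,k}}$, and a final continuous-mapping step for $\min$. Your remark on the continuity of $F_{2,k},F_{3,k},F_{4,k}$ is the one point the paper leaves implicit, and your argument (non-constant real-analytic functionals of the absolutely continuous vector $(\Gamma_1,\ldots,\Gamma_{k+1})$) settles it correctly.
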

Since $\P(\min\{U,V\}\le x)=2x-x^2$, $x\in[0,1]$, it follows for $\beta\in (0,1)$
   \begin{align*}
    \lim_{\nto}\P(\mathcal{T}_{i,n}<1-\sqrt{1-\beta})&= \P(\min\{U,V\}<1-\sqrt{1-\beta})\\
    &=2(1-\sqrt{1-\beta})-(1-\sqrt{1-\beta})^2=\beta\,.
\end{align*}

\section{Proofs}
\label{sec:4}
{\it To simplify the notation in the proofs we will write $a_n\sim b_n$ for real-valued sequences $a_n$ and $b_n$ if $\lim_{\nto} a_n/b_n=1$,  $a_n \gg b_n$ if $\lim_{\nto} b_n/a_n=0$,  $a_n \lesssim b_n$ if $\limsup_{\nto} a_n/b_n=C'$ for some constant $C'\in [0,\infty)$ and
$a_n \asymp b_n$ if $a_n \lesssim b_n$ and $b_n \lesssim a_n$. Additionally, throughout the proofs $C$ denotes a positive constant which may vary from line to line.
Unless explicitly stated otherwise, all limits are for $\nto$.}

\subsection{Proof of Theorem \ref{thm:clt}}
First, notice that the magnitude of $\sigma_n$, defined in \eqref{eq:defsigma}, might be different in the cases $\E[X^4]\neq 1$ and $\E[X^4]=1$. If $\E[X^4]=1$, Markov's inequality yields for $\varepsilon>0$ that 
\begin{align*}
    \P(|X^2-1|>\varepsilon)\leq \varepsilon^{-2}\E[(X^2-1)^2]=0,
\end{align*} 
such that $X^2=1$ almost surely, which due to $\E[X]=0$ is only possible if $X$ follows a symmetric Bernoulli distribution, i.e., $\P(X=-1)=\P(X=1)=1/2$. We will often consider the case $\E[X^4]=1$ separately in the course of this proof. \medskip

To prove the statement of Theorem \ref{thm:clt}, we use a central limit theorem for martingale differences. As we need the existence of higher moments, we truncate the random variables $X_{it}$ for $i=1,\ldots, p$ and $t=1,\ldots, n$ in an appropriate way.
Let $(\beta_n)_n$ be a positive sequence, which tends to zero and suffices $\beta_n\gg(\E[|X|^4\mathds{1}_{\{|X|>\beta_n(np)^{1/4}\}}])^{1/4}$. Notice, that such a sequence exists since we can choose a sequence $\beta_n'$, which tends zero sufficiently slowly such that $\beta_n'(np)^{1/4}\to\infty$. Now we set $\beta_n'':=(\E[|X|^4\mathds{1}_{\{|X|>\beta_n'(np)^{1/4}\}}])^{1/4}$, which also tends to zero as $n\to\infty$, and choose $\beta_n\gg\max\{\beta_n',\beta_n''\}$.

For this sequence $(\beta_n)_n$, we set
\begin{align}
\bar{X}_{it}&:= X_{it}\mathds{1}_{\{|X_{it}|\leq \beta_n (np)^{1/4}\}},\qquad 1\leq i\leq p,\,1\leq t\leq n,\nonumber \\
T_{ij}&:=\sum_{t=1}^n X_{it}X_{jt}\quad \text{ and } \quad  \bar{T}_{ij}:=\sum_{t=1}^n\bar{X}_{it}\bar{X}_{jt}, \qquad 1\leq i,\,j\leq p.\label{bart}
\end{align}
Since $\E[X^4]<\infty$ an application of Lemma \ref{lem:cut} with $s=4$ yields that it suffices to verify \eqref{clt} with $Z_n$ replaced by
\begin{align}\label{cutsum}
\bar{Z}_n=\frac{1}{n^2\sigma_n}\sum_{1\leq i,\,j\leq p} \big(\bar{T}_{ij}^2-\E[\bar{T}_{ij}^2]\big)\,.
\end{align}
It will be convenient to write $\bar{Z}_n$ as a sum of  martingale differences. In what follows, the notation 
$$\tx_i=(X_{i1}, \ldots, X_{in})\,, \qquad  i=1,\ldots,p$$
will be helpful. Noting that $\bar{T}_{ij}=\bar{T}_{ji}$, we start by writing $\bar{Z}_n$ as 
\begin{align*}
\bar{Z}_n&=\frac{2}{n^2\sigma_n}\sum_{j=1}^p\sum_{i=1}^{j-1}(\bar{T}_{ij}^2-\E[\bar{T}_{ij}^2|\tx_i])+\frac{2}{n^2\sigma_n}\sum_{j=1}^p\sum_{i=1}^{j-1}(\E[\bar{T}_{ij}^2|\tx_i]-\E[\bar{T}_{ij}^2])\\
&\quad +\frac{1}{n^2\sigma_n}\sum_{j=1}^p(\bar{T}_{jj}^2-\E[\bar{T}_{jj}^2]).
\end{align*} 
Using $\E[\bar{T}_{ij}^2|\tx_i]=\sum_{t=1}^n\sum_{u=1}^n\bar{X}_{it}\bar{X}_{iu}\E[\bar{X}_{jt}\bar{X}_{ju}]=\E[\bar{T}_{i,i+1}^2| \tx_i]$ for $j=i+1,\ldots,p$ we have
\begin{align*}
&\sum_{j=1}^p\sum_{i=1}^{j-1}(\E[\bar{T}_{ij}^2|\tx_i]-\E[\bar{T}_{ij}^2])
=\sum_{i=1}^{p-1} (p-i)(\E[\bar{T}_{i,i+1}^2|\tx_i]-\E[\bar{T}_{12}^2]).
\end{align*}
Setting
\begin{align*}
M_j:=2(p-j)(\E[\bar{T}_{j,j+1}^2|\tx_j]-\E[\bar{T}_{12}^2])+ 2\sum_{i=1}^{j-1}(\bar{T}_{ij}^2-\E[\bar{T}_{ij}^2|\tx_i])+(\bar{T}_{jj}^2-\E[\bar{T}_{jj}^2])
\end{align*}
we get
\begin{align*}
\bar{Z}_n=\frac{1}{n^2\sigma_n}\sum_{j=1}^p M_j,
\end{align*}
where $(M_j)_{j\ge 1}$ is a martingale difference sequence with respect to the filtration $(\mathcal{F}_{j})_{j\ge 0}$, where $\mathcal{F}_{j}$ is the sigma algebra generated by $\{\tx_1,\ldots,\tx_j\}$. Indeed, we have $\E[M_j|\mathcal{F}_{j-1}]=0$. 

By the Lindeberg-Feller theorem for martingales (see, for example, \cite[Theorem 8.2.4, p.\,344]{durrett2019probability}), the convergence $\bar{Z}_n\cid \mathcal{N}(0,1)$ is implied by the following two assertions:
\begin{enumerate}
\item $A_n:=\frac{1}{n^4\sigma_n^2}\sum_{j=1}^p\E[M_j^2|\mathcal{F}_{j-1}]\cip 1\,, \quad \nto\,,$
\item $\frac{1}{n^8\sigma_n^4}\sum_{j=1}^p\E[M_j^4|\mathcal{F}_{j-1}]\cip 0\,, \quad \nto\,.$
\end{enumerate}
\subsubsection*{Proof of (1).} To prove (1) we will show $\E[A_n]\to 1$ and $\Var(A_n)\to 0$ as $n\to\infty$. Since $(M_j)$ is a martingale difference sequence, we have $\E[A_n]=\Var(\bar{Z}_n)$ and thus we get
{\small
\begin{align}
&\E[A_n]=\Var\Big(\frac{1}{n^2\sigma_n}\sum_{i,j=1}^p\bar{T}_{ij}^2\Big)\\
&=\frac{1}{n^4\sigma_n^2}\Big(\Var\Big(\sum_{i=1}^p\bar{T}_{ii}^2\Big)+4 \Var\Big(\sum_{1\leq i<j\leq p}\bar{T}_{ij}^2\Big)+4\sum_{k=1}^{p-1}\sum_{j=k+1}^p\sum_{i=1}^p\operatorname{Cov}(\bar{T}_{ii}^2,\bar{T}_{kj}^2)\Big)\label{Var}\\
&=\frac{p}{n^4\sigma_n^2}\Var(\bar{T}_{11}^2)+\frac{2p(p-1)}{n^4\sigma_n^2}\Var(\bar{T}_{12}^2)+\frac{4p(p-1)^2}{n^4\sigma_n^2}\Cov(\bar{T}_{12}^2,\bar{T}_{13}^2)+\frac{4p(p-1)}{n^4\sigma_n^2}\Cov(\bar{T}_{11}^2,\bar{T}_{12}^2),
\end{align}}
where we used 
\begin{align*}
\Var\Big(\sum_{1\leq i<j\leq p}\bar{T}_{ij}^2\Big)&=\sum_{1\leq i<j\leq p}\Var(\bar{T}_{ij}^2)+\sum_{1\leq i<j\leq p}\sum_{\substack{1\leq k<l\leq p\\ (k,l)\neq (i,j)}}\operatorname{Cov}(\bar{T}_{ij}^2,\bar{T}_{kl}^2)
\end{align*}
and that $\Cov(\bar{T}_{ij}^2,\bar{T}_{kl}^2)=0$ if $\{i,j\}\cap\{k,l\}=\emptyset$. In the case $\E[X^4] \neq 1$ we then obtain by Lemma \ref{lem:var} that
\begin{align*}
\E[A_n]=\frac{1}{\sigma_n^2}\Big(4\frac{p^2}{n^2}+4(\E[X^4]-1)\Big(\frac{p^3}{n^3}+2\frac{p^2}{n^2}+\frac{p}{n}\Big)+o\Big(\frac{p^3}{n^3}+\frac{p}{n}\Big)\Big)=1+\frac{o\big(\frac{p^3}{n^3}+\frac{p}{n}\big)}{\sigma_n^2}  \to 1,
\end{align*}
since $\sigma_n^2\asymp p/n +(p/n)^3$. If $\E[X^4]= 1$ one can similarly check that $\E[A_n]\to 1$.

Now we turn to $\Var(A_n)$. To this end, we need the moments of the truncated random variable $\bar{X}$ and note that
\begin{align}
|\E[\bar{X}]|\leq \E[|X|\mathds{1}_{\{|X|>(np)^{1/4}\beta_n\}}]\leq \frac{1}{(np)^{3/4}\beta_n^{3}}\E[X^4\mathds{1}_{\{|X|>(np)^{1/4}\beta_n\}}]= o((np)^{-3/4})\,. \label{mo1}
\end{align}
Similarly we obtain
\begin{align}
\E[\bar{X}^2]=1+o((np)^{-1/2}) \quad \text{ and } \quad \E[\bar{X}^4]=\E[X^4]+o(1)\,,\label{mo2}
\end{align}
and for higher moments we get 
\begin{align}
\E[|\bar{X}|^{4+k}]\leq \E[\bar{X}^4](np)^{k/4}\beta_n^k=o((np)^{k/4})\,, \qquad k\ge 1\,.\label{mo3}
\end{align}

From the definitions of $M_j$ and the sigma algebra $\mathcal{F}_{j-1}$, we deduce that there exists some constant $C'_n$ only depending on $n$ such that
\begin{align}
 \E[M_j^2|\mathcal{F}_{j-1}]&=C'_n+ 8(p-j)\E\Big[\sum_{i=1}^{j-1}(\bar{T}_{ij}^2-\E[\bar{T}_{ij}^2|\tx_i])(\E[\bar{T}_{j,j+1}^2|\tx_j]-\E[\bar{T}_{12}^2])\Big|\mathcal{F}_{j-1}\Big] \\
 &\quad +4\,\E\Big[\sum_{i=1}^{j-1}(\bar{T}_{ij}^2-\E[\bar{T}_{ij}^2|\tx_i])(\bar{T}_{jj}^2-\E[\bar{T}_{11}^2])\Big|\mathcal{F}_{j-1}\Big]\\
 &\quad +4\,\E\Big[\Big(\sum_{i=1}^{j-1}(\bar{T}_{ij}^2-\E[\bar{T}_{ij}^2|\mathcal{F}_{j-1}])\Big)^2\Big|\mathcal{F}_{j-1}\Big].\label{Mqu}
\end{align}
The second term can be written as
\begin{align*}
    8(p-j)\sum_{i=1}^{j-1}\sum_{t_1,t_2=1}^n\underbrace{\E\big[(\bar{X}_{jt_1}\bar{X}_{jt_2}-\E[\bar{X}_{jt_1}\bar{X}_{jt_2}])(\E[\bar{T}_{j,j+1}^2|\tx_j]-\E[\bar{T}_{12}^2])\big]}_{=:K_{t_1,t_2,1}}\bar{X}_{it_1}\bar{X}_{it_2}\,,
\end{align*}
where $K_{t_1,t_2,1}$ can be expressed as
\begin{align*}
    K_{t_1,t_2,1}=\sum_{t_3,t_4=1}^n \E[(\bar{X}_{jt_1}\bar{X}_{jt_2}-\E[\bar{X}_{1t_1}\bar{X}_{1t_2}])(\bar{X}_{jt_3}\bar{X}_{jt_4}-\E[\bar{X}_{1t_3}\bar{X}_{1t_4}])]\E[\bar{X}_{2t_3}\bar{X}_{2t_4}].
\end{align*}
Notice that $K_{t_1,t_2,1}=O(1)$ as $n\to\infty$ since the summands are $0$ if $\{t_1,t_2\}\cap\{t_3,t_4\}=\emptyset$. Otherwise if $t_3\neq t_4$ we have $\E[\bar{X}_{2t_3}\bar{X}_{2t_4}]=\E[\bar{X}]^2=o((np)^{-3/2})$. The means are bounded in every case.
With the third term of \eqref{Mqu} we proceed similarly
\begin{align*}
     &4\E\Big[\sum_{i=1}^{j-1}(\bar{T}_{ij}^2-\E[\bar{T}_{ij}^2|\tx_i])(\bar{T}_{jj}^2-\E[\bar{T}_{11}^2])\Big|\mathcal{F}_{j-1}\Big]\\
     &=4\sum_{i=1}^{j-1}\sum_{t_1,t_2=1}^n\underbrace{\E[(\bar{X}_{jt_1}\bar{X}_{jt_2}-\E[\bar{X}_{1t_1}\bar{X}_{1t_2}])(\bar{T}_{jj}^2-\E[\bar{T}_{11}^2])]}_{K_{t_1,t_2,2}}\bar{X}_{it_1}\bar{X}_{it_2}\,,
\end{align*}
where $K_{t_1,t_2,2}$ can be written as
\begin{align*}
    K_{t_1,t_2,2}=\sum_{t_3,t_4=1}^n\E[(\bar{X}_{jt_1}\bar{X}_{jt_2}-\E[\bar{X}_{1t_1}\bar{X}_{1t_2}])(\bar{X}^2_{jt_3}\bar{X}^2_{jt_4}-\E[\bar{X}^2_{1t_3}\bar{X}^2_{1t_4}])].
\end{align*}
As $n\to\infty$ it holds that $K_{t_1,t_2,2}=O(n+(np)^{1/2})$ because the expectation is zero if $\{t_1,t_2\}\cap\{t_3,t_4\}=\emptyset$, bounded if $t_3\neq t_4$ and $C\,\E[\bar{X}^6]=o((np))^{1/2}$ if $t_1=t_2=t_3=t_4$.
Hence, we are able to write $A_n$ as 
\begin{align*}
    A_n &=\frac{1}{n^4\sigma_n^2}\sum_{j=1}^p\E[M_j^2|\mathcal{F}_{j-1}]=C_n+\frac{1}{n^4\sigma_n^2}\sum_{j=1}^p\sum_{i=1}^{j-1}\sum_{t_1,t_2=1}^n K_{t_1,t_2}\bar{X}_{it_1}\bar{X}_{it_2}\\
    &\quad +\frac{4}{n^4\sigma_n^2}\sum_{j=1}^p\E\Big[\Big(\sum_{i=1}^{j-1}(\bar{T}_{ij}^2-\E[\bar{T}_{ij}^2|\mathcal{F}_{j-1}])\Big)^2\Big|\mathcal{F}_{j-1}\Big]\,,
\end{align*}
    where $C_n$ is a constant only depending on $n$ and on the distribution of $X$ and $K_{t_1,t_2}=8(p-j)K_{t_1,t_2,1}+4K_{t_1,t_2,2}= O(n+p)$.
The expectation of the last term can be written as
\begin{align*}
\E\Big[\Big(\sum_{i=1}^{j-1}\big(\bar{T}_{ij}^2-\E[\bar{T}_{ij}^2|\mathcal{F}_{j-1}]\big)\Big)^2\Big|\mathcal{F}_{j-1}\Big]
&= \E\Big[\Big(\sum_{i=1}^{j-1}\bar{T}_{ij}^2\Big)^2\big|\mathcal{F}_{j-1}\Big]-\E\Big[\sum_{i=1}^{j-1}\bar{T}_{ij}^2\big|\mathcal{F}_{j-1}\Big]^2\\
&=\sum_{i_1,i_2=1}^{j-1}(\E[\bar{T}_{i_1j}^2\bar{T}_{i_2j}^2|\mathcal{F}_{j-1}]-\E[\bar{T}_{i_1j}^2|\mathcal{F}_{j-1}]\E[\bar{T}_{i_2j}^2|\mathcal{F}_{j-1}]).
\end{align*}
Using Lemma \ref{lem:zw} we see that, for sequences $C_{n,1}, C_{n,2}$ and $C_{n,3}$ tending to constants,  
\begin{align*}
    A_n-C_n&=\frac{1}{n^4\sigma_n^2}\sum_{j=1}^p\sum_{i=1}^{j-1}\sum_{t_1,t_2=1}^n K_{t_1,t_2}\bar{X}_{it_1}\bar{X}_{it_2}\\
    &\quad+(\E[\bar{X}^4]-(\E[\bar{X}^2])^2)\frac{4}{n^4\sigma_n^2}\sum_{j=1}^p\sum_{i_1,i_2=1}^{j-1}\sum_{t=1}^n\bar{X}_{i_1t}^2\bar{X}_{i_2t}^2\\
    &\quad+\frac{1}{n^4\sigma_n^2}\sum_{j=1}^p\sum_{i_1,i_2=1}^{j-1}\sum_{\substack{t_1,t_2=1\\t_1\neq t_2}}^n \Big(\E[\bar{X}]C_{n,1}\bar{X}_{i_1t_1}^2\bar{X}_{i_2t_1}\bar{X}_{i_2t_2}+C_{n,2}\bar{X}_{i_1t_1}\bar{X}_{i_1t_2}\bar{X}_{i_2t_1}\bar{X}_{i_2t_2}\Big)\\
    &\quad +\E[\bar{X}]^2\frac{C_{n,3}}{n^4\sigma_n^2}\sum_{j=1}^p\sum_{i_1,i_2=1}^{j-1}\sum_{\substack{t_1,t_2,t_3=1\\|\{t_1,t_2,t_3\}|=3}}^n\bar{X}_{i_1t_1}\bar{X}_{i_1t_2}\bar{X}_{i_2t_1}\bar{X}_{i_2t_3}\\
    &=:\xi_{n,1}+\xi_{n,2}+\xi_{n,3}+\xi_{n,4}+\xi_{n,5}.
\end{align*}
In view of
\begin{align*}
\Var(A_n)&=\Var(\xi_{n,1}+\xi_{n,2}+\xi_{n,3}+\xi_{n,4}+\xi_{n,5})\\
&\lesssim \Var(\xi_{n,1})+\Var(\xi_{n,2})+\Var(\xi_{n,3})+\Var(\xi_{n,4})+\Var(\xi_{n,5}),
\end{align*}
it suffices to show that the variances of $\xi_{n,1}$, $\xi_{n,2}$, $\xi_{n,3}$, $\xi_{n,4}$ and $\xi_{n,5}$ tend to zero as $n\to\infty$. 

We start by bounding the variance of $\xi_{n,1}$:
\begin{align*}
    \Var(\xi_{n,1})
    &\lesssim\frac{p^3}{n^8\sigma_n^4}\sum_{t_1, t_2,t_3,t_4=1}^n \big| K_{t_1,t_2}K_{t_3,t_4}(\E[\bar{X}_{1t_1}\bar{X}_{1t_2}\bar{X}_{1t_3}\bar{X}_{1t_4}]-\E[\bar{X}_{1t_1}\bar{X}_{1t_2}]\E[\bar{X}_{1t_3}\bar{X}_{1t_4}])\big|\,.
\end{align*}
Since the summands are zero if $|\{t_1,t_2,t_3,t_4\}|=4$, bounded by $\E[\bar{X}]$, which tends to zero as $n\to\infty$ if $|\{t_1,t_2,t_3,t_4\}|=3$ and bounded above by a constant else, we get
\begin{align*}
    \Var(\xi_{n,1})=o\Big(\frac{p^3(n+p)^2}{n^5\sigma_n^4}\Big).
\end{align*}

For the variance of $\xi_{n,2}$ it holds that 
\begin{align*}
    \Var(\xi_{n,2})&\lesssim \frac{p}{n^7\sigma_n^4}\sum_{j=1}^p\Var\bigg(\sum_{i_1,i_2=1}^{j-1}\bar{X}_{i_11}^2\bar{X}_{i_21}^2\bigg)\\
    &\lesssim\frac{p}{n^7\sigma_n^4}\sum_{j=1}^p\sum_{i_1,i_2,i_3=1}^{j-1}\Cov(\bar{X}_{i_11}^2\bar{X}_{i_21}^2,\bar{X}_{i_11}^2\bar{X}_{i_31}^2)\,.
\end{align*}
As the covariance above is equal to $\E[\bar{X}^8]-(\E[{X}^4])^2=o(np)$ if $i_1=i_2=i_3$, $\E[\bar{X}^6]\E[\bar{X}^2]-\E[\bar{X}^4](\E[\bar{X}^2])^2=o((np)^{1/2})$ if $|\{i_1,i_2,i_3\}|=2$  and bounded above by a constant if $|\{i_1,i_2,i_3\}|=3$, we find
\begin{align*}
\Var(\xi_{n,2})=o\Big(\frac{p^4}{n^6\sigma_n^4}+\frac{p^5}{n^7\sigma_n^4}\Big).
\end{align*}
 By similar arguments, we obtain
\begin{align*}
\Var(\xi_{n,3})\lesssim \frac{\E[\bar{X}]^2p}{n^8\sigma_n^4}\sum_{j=1}^p\sum_{i_1,i_2,i_3,i_4=1}^{j-1}\sum_{\substack{t_1,t_2,t_3,t_4=1\\t_1\neq t_2,\,t_3\neq t_4}}^n\Cov(\bar{X}_{i_1t_1}^2\bar{X}_{i_2t_1}\bar{X}_{i_2t_2},\bar{X}_{i_3t_3}^2\bar{X}_{i_4t_3}\bar{X}_{i_4t_4})\,,
\end{align*}
where the covariance above is $\E[\bar{X}^6]\E[\bar{X}^2]-\E[\bar{X}^3]^2\E[\bar{X}]^2=o((np)^{1/2})$ if $i_1=i_2=i_3=i_4$ and $t_1=t_3$ and $t_2=t_4$, zero if $|\{i_1,\ldots,i_4\}|=4$ or $|\{t_1,\ldots,t_4\}|=4$ and bounded by a constant in the remaining cases. Therefore, we get using $\E[\bar{X}]^2=o((np)^{-3/2})$ that
\begin{align*}
    \Var(\xi_{n,3})=o\bigg(\frac{p^{7/2}}{n^{13/2}\sigma_n^4}\bigg).
\end{align*}
For the variance of $\xi_{n,4}$ we have
\begin{align*}
    \Var(\xi_{n,4})\lesssim \frac{p}{n^8\sigma_n^4}\sum_{j=1}^p\sum_{i_1,i_2,i_3,i_4=1}^{j-1}\sum_{\substack{t_1,t_2,t_3,t_4=1\\t_1\neq t_2,\,t_3\neq t_4}}^n\Cov(\bar{X}_{i_1t_1}\bar{X}_{i_1t_2}\bar{X}_{i_2t_1}\bar{X}_{i_2t_2},\bar{X}_{i_3t_3}\bar{X}_{i_3t_4}\bar{X}_{i_4t_3}\bar{X}_{i_4t_4})\,.
\end{align*}
Again the covariance above is zero, if $|\{i_1,\ldots,i_4\}|=4$ or $|\{t_1,\ldots,t_4\}|=4$. If $|\{i_1,\ldots,i_4\}|=3$ it is bounded by $\E[\bar{X}^2]^2\E[\bar{X}]^4-\E[\bar{X}]^8=o((np)^{-3})$ and by a constant in the remaining cases. It follows $\Var(\xi_{n,4})=O\big(\frac{p^4}{n^5\sigma_n^4}\big)$.\\
Next, since $\Var(\bar{X}_{i_1t_1}\bar{X}_{i_1t_2}\bar{X}_{i_2t_1}\bar{X}_{i_2t_3})\lesssim 1$ for $|\{t_1,\ldots,t_4\}|=3$ and $\E[\bar{X}]^4=o((np)^{-3})$ we obtain $\Var(\xi_{n,5})=o(p^3n^{-5}\sigma_n^{-4})$.

Finally, we combine our variance estimates. In the case $\E[X^4]\neq 1$, it holds $\sigma_n^4\asymp (p/n)^2 +(p/n)^6$ which implies
\begin{equation}\label{eq:sesgss}
 \lim_{\nto} \max_{i=1,\ldots,5}   \Var(\xi_{n,i})=0\,. 
\end{equation}
  In the Bernoulli case $\E[X^4]= 1$, we have $\sigma_n^4\asymp (p/n)^4$. As $K_{t_1,t_2,1}$ and $K_{t_1,t_2,2}$ are zero in this case, $\xi_{n,1}=0$. Since $\E[\bar{X}]=\E[X]=0$ and $\E[\bar{X}^2]^2=\E[\bar{X}^4]=1$ one has $\xi_{n,2}=\xi_{n,3}=\xi_{n,5}=0$. Repeating the above considerations for $\xi_{n,4}$, one can show that $\Var(\xi_{n,4})\to 0$ as $n\to\infty$, establishing \eqref{eq:sesgss} in the Bernoulli case as well.
  
Equation \eqref{eq:sesgss} concludes the proof of $\Var(A_n)\to 0$, as $n\to\infty$. In combination with $\E[A_n]\to 1$, this proves the desired $A_n\cip 1$.

\subsubsection*{Proof of (2).}
We write
\begin{align*}
M_j&=2(p-j)(\E[\bar{T}_{j,j+1}^2|\tx_j]-\E[\bar{T}_{12}^2])+ 2\sum_{i=1}^{j-1}(\bar{T}_{ij}^2-\E[\bar{T}_{ij}^2|\tx_j])\\
&-2\sum_{i=1}^{j-1}(\E[\bar{T}_{ij}^2|\tx_i]-\E[\bar{T}_{12}^2])+2\sum_{i=1}^{j-1}(\E[\bar{T}_{ij}^2|\tx_j]-\E[\bar{T}_{12}^2])+(\bar{T}_{jj}^2-\E[\bar{T}_{11}^2])\\
&=: M_{j1}+M_{j2}+M_{j3}+M_{j4}+M_{j5}
\end{align*}
and bound the fourth moment of $M_j$ in the following way
\begin{align*}
\E[M_j^4]\lesssim \E[(M_{j1}+M_{j4})^4]+\E[M_{j2}^4]+\E[M_{j3}^4]+\E[M_{j5}^4].
\end{align*}
First we take a look at 
\begin{align*}
&\E[(\E[\bar{T}_{j,j+1}^2|\tx_j]-\E[\bar{T}_{12}^2])^4]\\
&=\E\Big[\Big((1+o((np)^{-1/2})(\bar{T}_{jj}-\E[\bar{T}_{11}])+o((np)^{-3/2})\sum_{t,u=1}^n(\bar{X}_{jt}\bar{X}_{ju}-\E[\bar{X}_{1t}\bar{X}_{1u}])\Big)^4\Big]\\
&\lesssim \E[(\bar{T}_{jj}-\E[\bar{T}_{11}])^4]+o(1)\,.
\end{align*}
For the last mean we have
\begin{align*}
\E[(\bar{T}_{jj}-\E[\bar{T}_{11}])^4]&=\E\Big[\Big(\sum_{t=1}^n(\bar{X}_{jt}^2-\E[\bar{X}^2])\Big)^4\Big]\\
&\lesssim n\E[\bar{X}^8]+n^2\E[\bar{X}^4]^2=o(n^2p),
\end{align*}
where we used \eqref{mo2} and \eqref{mo3} in the last line. Hence, we get after simplifying $M_{j4}$
\begin{align*}
\E[(M_{j1}+M_{j4})^4]=16(p-1)^4\E[(\E[\bar{T}_{j,j+1}^2|\tx_j]-\E[\bar{T}_{12}^2])^4]=o(p^5n^2)
\end{align*}
and by the Marcinkiewicz-Zygmund inequality, see for example \cite[Theorem 2, p.386]{teicher1978probability} in combination with Hölder inequality (see also \cite[Lemma 2, p.24]{feng2022max}), it follows 
\begin{align*}
\E[M_{j3}^4]\lesssim (j-1)\sum_{i=1}^{j-1}\E[(\E[\bar{T}_{ij}^2|\tx_i]-\E[\bar{T}_{12}^2])^4]=(j-1)^2o(n^2p).
\end{align*}
To bound the fourth moment of $M_{j2}$ we consider the eighth moment of $\bar{T}_{12}$
\begin{align*}
\E[\bar{T}_{12}^8]&=\E\Big[\Big(\sum_{t=1}^n\bar{X}_{1t}\bar{X}_{2t}\Big)^8\Big]\\
&\lesssim n(\E[\bar{X}^8])^2+n^2(\E[\bar{X}^6]^2\E[\bar{X}^2]^2+\E[\bar{X}^7]^2\E[\bar{X}]^2+\E[\bar{X}^5]^2\E[\bar{X}^3]^2+\E[\bar{X}^4]^4)\\
&+n^3+n^4+n^5\E[\bar{X}]^4+n^6\E[\bar{X}]^8+n^7\E[\bar{X}]^{12}+n^8\E[\bar{X}]^{16}=O(n^4)+o(n^3p^2),
\end{align*}
where we used \eqref{mo1}, \eqref{mo2} and \eqref{mo3} in the last step. By the Marcinkiewicz-Zygmund inequality (see for instance \cite[Theorem 2, p.386]{teicher1978probability}) and Lemma 1 of \cite{feng2022max} we conclude that
\begin{align*}
\E[M_{j2}^4]&=2^4\E\Big[\E\Big[\Big(\sum_{i=1}^{j-1}(\bar{T}_{ij}^2-\E[\bar{T}_{ij}^2|\tx_j])\Big)^4\Big|\tx_j\Big]\Big]\\
&\lesssim  (j-1)\sum_{i=1}^{j-1}\E[\E[(\bar{T}_{ij}^2-\E[\bar{T}_{ij}^2|\tx_j])^4|\tx_j]]\\
&\leq  (j-1)^2\E[\bar{T}_{12}^8]=(j-1)^2(O(n^4)+o(n^3p^2)).
\end{align*}
Finally, it holds for the fourth moment of $M_{j5}$ that
\begin{align*}
\E[M_{j5}^4]=\sum_{t_1,\ldots, t_8=1}^n \E[(\bar{X}_{jt_1}^2\bar{X}_{jt_2}^2-\E[\bar{X}_{jt_1}^2\bar{X}_{jt_2}^2])\cdots (\bar{X}_{jt_7}^2\bar{X}_{jt_8}^2-\E[\bar{X}_{jt_7}^2\bar{X}_{jt_8}^2])]
\end{align*}
Observe that if there are more than five different indices $t_1,\ldots,t_8$ then one factor in the mean above is independent from the rest and the mean is zero, thus by \eqref{mo1}, \eqref{mo2} and \eqref{mo3}
\begin{align*}
&\E[M_{j5}^4]\\
&\lesssim n\E[\bar{X}^{16}]+n^2(\E[\bar{X}^{14}]\E[\bar{X}^2]+\E[\bar{X}^{12}]\E[\bar{X}^4]+\E[\bar{X}^{10}]\E[\bar{X}^6]+\E[\bar{X}^{8}]^2)\\
&+n^3(\E[\bar{X}^{12}]\E[\bar{X}^2]^2+\E[\bar{X}^{10}]\E[\bar{X}^4]\E[\bar{X}^2]+\E[\bar{X}^{8}]\E[\bar{X}^4]^2+\E[\bar{X}^{8}]\E[\bar{X}^6]\E[\bar{X}^2]+\E[\bar{X}^{6}]^2\E[\bar{X}^4])\\
&+n^4(\E[\bar{X}^{10}]\E[\bar{X}^2]^3+\E[\bar{X}^{8}]\E[\bar{X}^4]\E[\bar{X}^2]^2+\E[\bar{X}^{6}]^2\E[\bar{X}^2]^2+\E[\bar{X}^{6}]\E[\bar{X}^4]^2\E[\bar{X}^2]+\E[\bar{X}^{4}]^4)\\
&+n^5(\E[\bar{X}^{8}]\E[\bar{X}^2]^4+\E[\bar{X}^{6}]\E[\bar{X}^4]\E[\bar{X}^2]^3+\E[\bar{X}^{4}]^3\E[\bar{X}^2]^2)=o(n^4p^3)+o(n^6p)\,.
\end{align*} 
Consequently, we obtain
\begin{align*}
\frac{1}{n^8\sigma_n^4}\sum_{j=1}^p\E[M_j^4]=o\Big(\frac{p^6}{n^6\sigma_n^4}\Big)+o\Big(\frac{p^2}{n^2\sigma_n^4}\Big)=o(1),
\end{align*}
since $\sigma_n^4\asymp (p/n)^6+(p/n)^2$ if $\E[X^4]\neq 1$.  In the Bernoulli case it holds that $M_{j1}=M_{j3}=M_{j4}=M_{j5}=0$ and $\E[M_{j2}^4]=(j-1)^2 O(n^4)$. Therefore, $\frac{1}{n^8\sigma_n^4}\sum_{j=1}^p\E[M_j^4]=O(p^3/(n^4\sigma_n^4))=o(1)$ as $\sigma_n^4\asymp (p/n)^4$, which finishes the proof.\\

\subsection{Proof of Theorem \ref{thm:non-clt}}


We split the modified trace of $\S^2$ into four terms 
\begin{align*}
&\frac{n^2}{a_{np}^4}\tr(\S^2)-\frac{2n(n+p-2)}{a_{np}^4}\tr(\S)+\frac{np(n+p-2)}{a_{np}^4}\\
 &=\frac{1}{a_{np}^4}\sum_{i=1}^p \sum_{t=1}^n X_{it}^4+\frac{2}{a_{np}^4}\sum_{1\leq i<j\leq p}\sum_{t=1}^n(X_{it}^2-1)(X_{jt}^2-1)\\
 &\quad +\frac{2}{a_{np}^4}\sum_{i=1}^p\sum_{1\leq t<u\leq n}(X_{it}^2-1)(X_{iu}^2-1)+\frac{2}{a_{np}^4}\sum_{1\leq i<j\leq p}\sum_{\substack{t,u=1\\u\neq t}}^nX_{it}X_{jt}X_{iu}X_{ju} \\
&=:\frac{1}{a_{np}^4}\sum_{i=1}^p \sum_{t=1}^n X_{it}^4+V_{n,1}+V_{n,2}+V_{n,3}.
\end{align*}
As $X^4$ is regularly varying with index $\alpha/4$ and
\begin{align*}
    \frac{np}{a_{np}^4}\E[X^4\mathds{1}_{\{X^4\leq a_{np}^4\}}]\sim \frac{\alpha}{4-\alpha}\,np\,\P(X^4>a_{np}^4)\sim \frac{\alpha}{4-\alpha}\,, \qquad \nto\,,
\end{align*}
the first term converges to an $\alpha/4$-stable distribution by \cite[Theorem 3.8.2]{durrett2019probability} (see also \cite{samorodnitsky:taqqu:1994}). By \cite[p.~164]{durrett2019probability} this $\alpha$-stable distribution has the characteristic function 
\begin{align*}
    \E[e^{\mathrm{i}t\zeta_{\alpha/4}}]=\exp\Big(\mathrm{i}tc_\alpha+\frac{\alpha}{4}\int_0^\infty\big(e^{\mathrm{i}tx}-1-\frac{\mathrm{i}tx}{1+x^2}\big)x^{-(\alpha/4+1)}dx\Big),
\end{align*}
where $c_\alpha$ is a constant only depending on $\alpha$.

Therefore, it suffices to show that $V_{n,1}$, $V_{n,2}$ and $V_{n,3}$ tend to zero in probability. 
We will start by showing $V_{n,3}\cip 0$. As $X$ is regularly varying with index $\alpha>2$, the second moment of $X$ exists by Proposition 1.3.2 of \cite{mikosch1999regular}. An application of Markov's inequality yields for $\delta>0$
\begin{align*}
\P\big({|V_{n,3}|}>\delta\big)
&\leq \frac{4}{\delta^2a_{np}^8}\E\Big[\Big(\sum_{1\leq i<j\leq p}\sum_{\substack{t,u=1\\u\neq t}}^nX_{it}X_{jt}X_{iu}X_{ju}\Big)^2\Big]\\
&\lesssim { \frac{1}{a_{np}^8}\sum_{1\leq i<j\leq p}\sum_{\substack{t,u=1\\u\neq t}}^n\E[X^2]^4\lesssim \frac{p^2n^2}{a_{np}^8}=o(1)\,,}
\end{align*}
where we used that $a_{np}= (np)^{1/\alpha}\ell_1(np)$ for $\alpha\in (2,4)$ (see \cite{bingham:goldie:teugels:1987}) and a slowly varying function $\ell_1$. The last step is a consequence of the following property of slowly varying functions $\ell$. By the Potter Bounds, which can be found in Theorem 1.5.6 of \cite{bingham:goldie:teugels:1987}, it holds that for $x$ sufficiently large and any $\epsilon>0$ and $K>1$
\begin{align}\label{regprop1}
K x^{-\epsilon}\leq \ell(x)\leq K x^\epsilon.
\end{align}

To show that $V_{n,1}\cip 0$ we will truncate the random variables $X_{it}^2$ 
at $s_n:=p^{2/\alpha}n^{2(1+\epsilon)/\alpha}$ for a positive $\epsilon$ sufficiently small. 
Then we have for the event $Q:=\bigcup\limits_{i=1}^p\bigcup\limits_{t=1}^n\{X_{it}^2>s_n\}$ that
\begin{align*}
\P\big(Q\big)\leq pn \,\P(|X|>s_n^{1/2})\to 0\,,\qquad \nto\,,
\end{align*}
where we used that $|X|$ is regularly varying with index $\alpha\in(2,4)$. 
Letting $\bar{X}^2_{it}:=X_{it}^2\1_{\{X_{it}^2\leq s_n\}}$ and $\eta_{it}:=\bar{X}_{it}^2-\E[\bar{X}_{it}^2]$, we get for any $\delta>0$
 \begin{align}
&\P\Big(|V_{n,1}|>\delta\Big)
\leq \P\Big(|V_{n,1}|>\delta,\,Q^C\Big)+ \P(Q)\nonumber\\
&\leq \P\Big(\Big|\frac{2}{a_{np}^4}\sum_{1\leq i<j\leq p}\sum_{t=1}^n(\bar{X}_{it}^2-1)(\bar{X}_{jt}^2-1)\Big| >\delta\Big)+o(1)\\
&\leq \P\Big(\Big|\frac{2}{a_{np}^4}\sum_{1\leq i<j\leq p}\sum_{t=1}^n\eta_{it}\eta_{jt}\Big| >\frac{\delta}{2}\Big)+\P\Big(\Big|\frac{2(p-1)(\E[\bar{X}^2]-1)}{a_{np}^4}\sum_{i=1}^{p}\sum_{t=1}^n\eta_{it}+q_n\Big|>\frac{\delta}{2}\Big)+o(1)\\
&\lesssim \frac{n}{a_{np}^8}\Var\Big(\sum_{1\leq i<j\leq p}\eta_{i1}\eta_{j1}\Big)+\frac{p^3n(\E[\bar{X}^2]-1)^2}{a_{np}^8}\Var(\eta_{11}) +o(1)\,,\label{repl}
\end{align}
where for the last line we used Markov's inequality and the fact that 
\begin{align*}
q_n:=\frac{p(p-1) n}{a_{np}^4} (1-\E[\bar{X}^2])^2 \leq \frac{p^2 n}{a_{np}^4} \E[X^2\1_{\{X^2>s_n\}}]^2\to 0\,,\qquad \nto\,,
\end{align*}
 since $\E[X^2\mathds{1}_{\{X^2>s_n\}}]\asymp s_n\P(X^2>s_n)$ by Karamata's Theorem (see \cite{bingham:goldie:teugels:1987}) and due to the Potter bounds \eqref{regprop1}.
 
 Regarding the first term in \eqref{repl}, we obtain that
 \begin{align*}
  \frac{n}{a_{np}^8}\Var\Big(\sum_{1\leq i<j\leq p}\eta_{i1}\eta_{j1}\Big) 
  &\lesssim \frac{np^2}{a_{np}^8} \E[\eta_{11}^2]^2\sim \frac{np^2}{a_{np}^8} \E[\bar{X}^4]^2\to 0\,,\qquad \nto\,,
 \end{align*}
 since $\E[X^4\mathds{1}_{\{X^2\leq s_n\}}]\asymp s_n^{2}\P(X^2>s_n)$ by Karamata's theorem. Using Karamata's theorem again and similiar arguments as above, also the second term of \eqref{repl} tends to zero as $n\to\infty$.
 Therefore, $V_{n,1}\cip 0$ as $n\to\infty$.
 
Notice that $V_{n,2}$ is equal to $V_{n,1}$ if we exchange the roles of $n$ and $p$, which does not matter for the proof given above. Hence, it also holds that $V_{n,2}\cip 0$ as $n\to\infty$.

\subsection{Proof of Theorem \ref{thm:kal}}

Let now $y\in\R$ and $l_1,\ldots,l_k\in\N_0$. Let $(Q_n)_n$ be a sequence of probability measures defined by the distribution functions 
\begin{align*}
\P(Y_n\leq y,\,N_n(B_1)\leq l_1,\ldots,\,N_n(B_k)\leq l_k),
\end{align*}
where $B_1,\ldots, B_k\in\mathcal{B}_N$. We recall that for a point process $\xi$, $\mathcal{B}_{\xi}:=\{B\; \text{bounded Borel set}: \xi(\partial B)=0\}$. As $\R^{k+1}$ is a Polish space 
$(Q_n)_n$ is tight. Then, by Prokhorov's Theorem, $(Q_n)_n$ is relatively compact with respect to convergence in distribution and hence, for every sequence $(n_m)_{m\in \N}$ in $\N$ there exists a subsequence $(n_{m_j})_{j\in\N}$ with
\begin{align}\label{conv}
&\lim\limits_{j\to\infty}\P(Y_{n_{m_j}}\leq y, N_{n_{m_j}}(B_1)\leq l_1,\ldots,\,N_{n_{m_j}}(B_k)\leq l_k)\\
&=\P(\tilde{Y}\leq y,\tilde{N}(B_1)\leq l_1,\ldots,\, \tilde{N}(B_k)\leq l_k),
\end{align}
where $\tilde{Y}, \tilde{N}(B_1),\ldots,\tilde{N}(B_k)$ are real valued random variables. Since the sets $B_1,\ldots, B_k$ are arbitrary, we can also find a subsequence $(n_{m_j})_{j\in\N}$ such that \eqref{conv}
holds for any choice of $B_1,\ldots, B_k\in\mathcal{B}_N$. Let $\tilde{N}$ be the point process defined by the random vectors $(\tilde{N}(B_1),\ldots,\tilde{N}(B_k))$ for $B_1,\ldots, B_k\in \mathcal{B}_N$.

Assumption (K2) implies for every $U\in\mathcal{U}$
\begin{align*}
\liminf\limits_{n\to\infty}\P(N_n(U)=0)\geq\P(N(U)=0). 
\end{align*}
Therefore, we get by Lemma 4.6 of \cite{kallenberg:1983} that $\mathcal{B}_N\subset\mathcal{B}_{\tilde{N}}$ and hence $\mathcal{U}\subset\mathcal{B}_{\tilde{N}}$.
Then, we get
\begin{align*}
\P(Y\leq y,\, N(U)=0)=\lim\limits_{j\to\infty}\P(Y_{n_{m_j}}\leq y,\, N_{n_{m_j}}(U)=0)=\P(\tilde{Y}\leq y,\, \tilde{N}(U)=0)
\end{align*}
for every $U\in \mathcal{U}$.
Let $\mathfrak{R}$ be the set of locally finite measures $\mu$  on $(\R,\mathcal{B})$, where $\mathcal{B}$ consists of all bounded Borel sets  with $\mu(B)\in \N_0$ for all $B\in\mathcal{B}$. Additionally, $\mathcal{N}$ is the $\sigma$-algebra on $\mathfrak{R}$ that is generated by the mappings $\mu\mapsto\mu(B),\,B\in\mathcal{B}$, i.e., the smallest $\sigma$-algebra making these mappings measurable.

We now introduce the Dynkin-system
\begin{align*}
\mathcal{D}:=\{M\in\mathcal{N}:\, \P(Y\leq y, N\in M)=\P(\tilde{Y}\leq y,\,\tilde{N}\in M)\}.
\end{align*}
As $Y_n\cid Y$, we have $\P(Y\leq y)=\P(\tilde{Y}\leq y)$ and therefore, $\mathfrak{R}\in \mathcal{D}$. Moreover, $\mathcal{D}$ is closed under proper differences and monotone limits. Let 
\begin{align*}
\mathcal{C}:=\{\{\mu\in\mathfrak{R}:\,\mu(U)=0\},\, U\in\mathcal{U}\}.
\end{align*}
By assumption (K2), $\mathcal{C}\subset\mathcal{D}$ and $\mathcal{C}$ is closed under finite intersection. Therefore, by 15.2.1 of \cite{kallenberg:1983} it follows that $\sigma(\mathcal{C})\subset\mathcal{D}$. By Lemmas 2.2, 1.3 and 1.4 of \cite{kallenberg:1983} it holds that $\varphi:\mu\to\mu^*$ is measurable $\sigma(\mathcal{C})\to\mathcal{N}$, where $\mu^*(B)=\sum_{s\in B}\mathds{1}_{[1,\infty)}(\mu\{s\})$ for every $B\in\mathcal{B}$. As $\sigma(\mathcal{C})\subset\mathcal{D}$ we get for every $M\in\mathcal{N}$
\begin{align*}
\P(Y\leq y, N^*\in M)&=\P(Y\leq y, N\in\varphi^{-1}(M))\\
&=\P(\tilde{Y}\leq y, \tilde{N}\in\varphi^{-1}(M))\\
&=\P(\tilde{Y}\leq y, \tilde{N}^*\in M)\,.
\end{align*}
A simple point process $\mu$ can be written as
\begin{align*}
\mu=\sum_{i\in I}\varepsilon_{X_i},
\end{align*}
where $I$ is an index set and the $X_i$'s are random elements. 
Therefore, it holds for every $B\in\mathcal{B}$ that
\begin{align*}
\mu^*(B)=\sum_{s\in B}\mathds{1}_{[1,\infty)}(\mu\{s\})=\sum_{s\in B}\mu\{s\}=\sum_{i\in I}\sum_{s\in B}\mathds{1}_{\{X_i=s\}}=\sum_{i\in I}\mathds{1}_{\{X_i\in B\}}=\mu(B).
\end{align*}
As $N$ is simple, we get
\begin{align*}
\P(Y\leq y,\, N\in M)=\P(\tilde{Y}\leq y,\, \tilde{N}^*\in M),
\end{align*}
for every $M\in\mathcal{N}$ and every $y\in\R$. Therefore, we also get $\P(N\in M)=\P(\tilde{N}^*\in M)$ for every $M\in\mathcal{N}$. We define the set of measures
\begin{align*}
\hat{M}:=\{\mu\in\mathfrak{R}:\, \mu(B_1)\leq l_1,\ldots, \mu(B_k)\leq l_k\}\in\mathcal{N}.
\end{align*} 
Then, it follows that
\begin{align*}
\P(N(B_1)\leq l_1,\ldots, N(B_k)\leq l_k)=\P(\tilde{N}^*(B_1)\leq l_1,\ldots, \tilde{N}^*(B_k)\leq l_k)
\end{align*}
and as $B_1,\ldots, B_k,l_1,\ldots, l_k$ were chosen arbitrarily, we have $N\stackrel{d}{=}\tilde{N}^*$. Additionally, for $I\in\mathcal{J}$ it holds that as $j\to\infty$
\begin{align}\label{h1}
\P(N_{n_{m_j}}(I)\leq l)\to\P(\tilde{N}(I)\leq l).
\end{align}
Then, by $N\stackrel{d}{=}\tilde{N}^*$, the definition of $\tilde{N}^*$, \eqref{h1}, 15.4.3 of \cite{kallenberg:1983} and assumption (K1) we get
\begin{align*}
\E[N(I)]=\E[\tilde{N}^*(I)]\leq \E[\tilde{N}(I)]\leq \liminf\limits_{j\to\infty}\E[N_{n_{m_j}}(I)]\leq \limsup\limits_{n\to\infty}\E[N_n(I)]\leq \E[N(I)].
\end{align*}
Therefore, $\tilde{N}$ is a.s. simple and consequently
\begin{align*}
\P(Y\leq y,\, N\in M)=\P(\tilde{Y}\leq y,\, \tilde{N}\in M)
\end{align*}
for every $M\in\mathcal{N}$. Inserting $\hat{M}$, we get
\begin{align*}
\P(Y\leq y, N(B_1)\leq l_1,\ldots, N(B_k)\leq l_k)=\P(\tilde{Y}\leq y,\tilde{N}(B_1)\leq l_1,\ldots, \tilde{N}(B_k)\leq l_k).
\end{align*}
As the subsequence $(n_m)_m$ was arbitrary, we conclude for every $l_1,\ldots,l_k\in\N_0$ and $B_1,\ldots, B_k\in\mathcal{B}_N$
\begin{align*}
\lim\limits_{n\to\infty}\P(Y_{n}\leq y,\, N_{n}(B_1)\leq l_1,\ldots,\,N_{n}(B_k)\leq l_k)&=\P({Y}\leq y,\,{N}(B_1)\leq l_1,\ldots,\, {N}(B_k)\leq l_k)\\
&=\P({Y}\leq y)\P({N}(B_1)\leq l_1,\ldots,\, {N}(B_k)\leq l_k)
\end{align*}
and therefore $(Y_n)_n$ and $(N_n)_n$ are asymptotically independent.

\subsection{Proof of Theorem \ref{thm:ind}}\label{sec:3.4}

By the proof of Theorem~\ref{lem:pp} we know that $\lim\limits_{n\to\infty}\E[N_n(I)]=\E[N(I)]$ for every interval $I$. In view of Theorem \ref{thm:kal}, it suffices to show
\begin{align*}
\lim\limits_{n\to\infty} \P(Z_n\leq y,\,N_n(U)=0)=\Phi(y)\P(N(U)=0),
\end{align*}
for every $y\in\R$ and $U\in\mathcal{U}$, where $\Phi$ is the distribution function of the standard normal distribution and $\mathcal{U}$ is the set of finite unions of intervals. As $Z_n\cid \mathcal{N}(0,1)$, this is equivalent to 
\begin{align}\label{eq:tosh}
\lim\limits_{n\to\infty} \P(Z_n\leq y,\,N_n(U)\neq 0)=\Phi(y)\P(N(U)\neq 0),
\end{align}
for every $U\in\mathcal{U}$ and $y\in \R$. Throughout this proof, we let $U\in\mathcal{U}$ and $y\in \R$ be arbitrary.
\medskip

Recall that $p=O\big(n^{(s-2)/4}\big)$ for some $s\ge 4$ and that $\vep>0$ is such that $\E|X|^{s+\vep}]<\infty$. We need the following notation. For $\tilde{s}=s+\vep$ let $(\beta_n)_n$ be a positive sequence, which tends to zero and satisfies $\beta_n \gg (\E[|X|^{\tilde{s}}\mathds{1}_{\{|X|>\beta_n(np)^{1/\tilde{s}}\}}])^{1/\tilde{s}}$. Such a sequence exists by similar reasons as in the proof of Theorem \ref{thm:clt}. We set
\begin{align*}
\bar{X}_{it}&:= X_{it}\mathds{1}_{\{|X_{it}|\leq \beta_n (np)^{1/\tilde{s}}\}},\\
\xi_{ijt}&:=X_{it}X_{jt},\quad \bar{\xi}_{ijt}:=\bar{X}_{it}\bar{X}_{jt},\quad 1\leq i,\,j\leq p,\,1\leq t\leq n,\\
T_{ij}&:=\sum_{t=1}^n\xi_{ijt},\quad \bar{T}_{ij}:=\sum_{t=1}^n\bar{\xi}_{ijt},\quad 1\leq i,\,j\leq p.
\end{align*}
By Lemma \ref{lem:hilf}, we have for $\delta>0$
\begin{align}
&\P(\bar{Z}_n\leq y-\delta,\,N_n(U)\neq 0)-\P(|Z_n-\bar{Z}_n|>\delta)
\leq\P(Z_n\leq y,\,N_n(U)\neq 0) \nonumber\\
&\leq \P(\bar{Z}_n\leq y+\delta,\,N_n(U)\neq 0)+\P(|Z_n-\bar{Z}_n|>\delta)\,,\label{eq:dfsss}
\end{align}
where 
\begin{align}\label{eq:defZbar}
\bar{Z}_n &=\frac{1}{n^2\sigma_n}\sum_{i,j=1}^p(\bar{T}_{ij}^2-\E[\bar{T}_{ij}^2])\,.
\end{align}
Lemma \ref{lem:cut} asserts that $\lim_{\nto}\P(|Z_n-\bar{Z}_n|>\delta)= 0$ for every $\delta>0$.

Assume for the moment that 
\begin{align}\label{eq:dddg}
\lim\limits_{n\to\infty} \P(\bar{Z}_n\leq y,\,N_n(U)\neq 0)=\Phi(y)\P(N(U)\neq 0).
\end{align}
In conjunction with \eqref{eq:dfsss}, this yields for $\delta>0$
\begin{align*}
\limsup\limits_{n\to\infty} \P(Z_n\leq y,\,N_n(U)\neq 0)&\leq \Phi(y+\delta)\P(N(U)\neq 0),\\
\liminf\limits_{n\to\infty} \P(Z_n\leq y,\,N_n(U)\neq 0)&\geq \Phi(y-\delta)\P(N(U)\neq 0),
\end{align*}
so that taking the limit $\delta\to 0$ establishes \eqref{eq:tosh} by the continuity of the normal distribution. 
Therefore, it remains to show \eqref{eq:dddg} for which we proceed similarly to \cite{feng2022max}.
\medskip

To this end, we set $A_n=A_n(y)=\{\bar{Z}_n\leq y\}$ and $B_I=B_I(U)=\{d_p(\sqrt{n}S_{ij}-d_p)\in U\}$, where $I=(i,j)\in\Lambda_n:=\{(i,j):\,1\leq i<j\leq p\}$. For $I_1=(i_1,j_1)\in\Lambda_n$ and $I_2=(i_2,j_2)\in\Lambda_n$ we write $I_1<I_2$ if $i_1<i_2$ or ($i_1=i_2$ and $j_1<j_2$). Then we have
\begin{align*}
\P(\bar{Z}_n\leq y,\,N_n(U)\neq 0)&=\P\Big(\bigcup_{I\in\Lambda_n}A_nB_I\Big),\\
\P(\bar{Z}_n\leq y) \P(N_n(U)\neq 0)&=\P(A_n) \P\Big(\bigcup_{I\in\Lambda_n}B_I\Big),
\end{align*}
where $A_nB_I=A_n\cap B_I$. 
Using $Z_n-\bar{Z}_n\cip 0$, Theorem~\ref{thm:clt} and Theorem~\ref{lem:pp}, it holds 
\begin{align}
\lim_{n\to\infty} \P(\bar{Z}_n\leq y)=\Phi(y) \quad \text{ and }\quad \lim_{\nto} \P(N_n(U)\neq 0)=\P(N(U)\neq 0)\,,
\end{align}
so that equation \eqref{eq:dddg} follows from
\begin{equation}\label{eq:jh2905}
\lim_{\nto} \P\Big(\bigcup_{I\in\Lambda_n}A_nB_I\Big)-\P(A_n) \P\Big(\bigcup_{I\in\Lambda_n}B_I\Big) =0\,.
\end{equation}
To prove \eqref{eq:jh2905}, we start by setting $$W_{n,d}:=\sum_{I_1<\ldots<I_d}\P\Big(\bigcap\limits_{l=1}^d A_nB_{I_l}\Big)\,, \quad \wt W_{n,d}:=\sum_{I_1<\ldots<I_d}\P\Big(\bigcap\limits_{l=1}^d B_{I_l}\Big) \quad \text{ and } \quad  \bar{W}_{n,d}:=\P(A_n)\wt W_{n,d}\,.$$
Then the Bonferroni bounds yield for $k\geq 1$
\begin{equation}\label{eq:bonferroni}
\begin{split}
\sum_{d=1}^{2k}(-1)^{d-1}W_{n,d}\leq\P\Big(\bigcup\limits_{I\in\Lambda_n}A_nB_I\Big)\leq \sum_{d=1}^{2k-1}(-1)^{d-1}W_{n,d}\,,\\
\sum_{d=1}^{2k}(-1)^{d-1}\bar{W}_{n,d}\leq \P(A_n)\P\Big(\bigcup\limits_{I\in\Lambda_n}B_I\Big)\leq\sum_{d=1}^{2k-1}(-1)^{d-1}\bar{W}_{n,d}\,.
\end{split}
\end{equation}
By \eqref{eq:bonferroni}, we have for $k\geq 1$
\begin{align}
\left| \P\Big(\bigcup_{I\in\Lambda_n}A_nB_I\Big)-\P(A_n)\P\Big(\bigcup\limits_{I\in\Lambda_n}B_I\Big)\right| &\le \sum_{d=1}^{2k-1} \big|W_{n,d}-\bar{W}_{n,d}\big| +\max\{W_{n,2k},\bar{W}_{n,2k}\} \nonumber\\
&\le \sum_{d=1}^{2k-1} \big|W_{n,d}-\bar{W}_{n,d}\big| +\wt W_{n,2k}\,.\label{eq:sdsdgdf}
\end{align}
From \cite[p.~555]{heiny:mikosch:yslas:2021} we know that 
\begin{equation}\label{eq:gsedgds}
\lim_{n\to\infty}\wt W_{n,k}=\frac{(\mu(U))^k}{k!}\,, \qquad k\ge 1.
\end{equation}
Note that $\lim_{k\to\infty} \frac{(\mu(U))^k}{k!} =0$. By first letting $\nto$ and then $k\to \infty$ in \eqref{eq:sdsdgdf} we now obtain \eqref{eq:jh2905} provided that
\begin{equation}\label{eq:finaljh}
\lim_{n\to\infty}\big|W_{n,d}-\bar{W}_{n,d}\big|=0\,,\qquad d\ge 1\,.
\end{equation}

\textbf{Proof of \eqref{eq:finaljh}.} 
For fixed $I_1<\ldots<I_d\in\Lambda_n$ with $I_l=(i_l,j_l)$ for $l=1,\ldots, d$ we will identify the summands of $\bar{Z}_n$ in \eqref{eq:defZbar} that are dependent on $B_{I_1},\ldots,B_{I_d}$ and show that their contribution  is negligible (in a suitable way). Therefore, we introduce the set $\Lambda_{n,d}=\Lambda_{n,d}(I_1,\ldots, I_d)$ through
\begin{align*}
\Lambda_{n,d}:=\{(i_l,j),\, (j,i_l):\,1\leq j\leq p,\, 1\leq l\leq d\}\cup \{(i,j_l),\, (j_l,i):\, 1\leq i\leq p,\, 1\leq l\leq d\}.
\end{align*}
The set $\Lambda_{n,d}$ includes the indices $(i,j)$ of all summands of $\bar{Z}_n$ that are dependent on $B_{I_1},\ldots,B_{I_d}$. Notice that $\Lambda_{n,d}$ is not a subset of $\Lambda_n$ because $\Lambda_{n,d}$ might also contain indices $(i,i)$ corresponding to diagonal elements $S_{ii}$ of the covariance matrix. 
For our further arguments the following bound on the cardinality of $\Lambda_{n,d}$ is important:  $$|\Lambda_{n,d}|\leq 4dp.$$
By the definition of $\Lambda_{n,d}$, $\bar{Z}_n-\bar{Z}_{n,d}$ is independent of $B_{I_1},\ldots, B_{I_d}$, where
\begin{align*}
\bar{Z}_{n,d}:=\frac{1}{n^2\sigma_n}\sum_{(i,j)\in\Lambda_{n,d}}\big(\bar{T}_{ij}^2-\E[\bar{T}_{ij}^2]\big)\,. 
\end{align*}
 Using this independence and applying Lemma \ref{lem:hilf} twice we get for $\delta>0$,
\begin{align*}
\P\Big(A_n(y)\bigcap\limits_{l=1}^d B_{I_l}\Big)&\leq\P\Big(\bar{Z}_n-\bar{Z}_{n,d}\leq y+\delta,\,\bigcap\limits_{l=1}^d B_{I_l}\Big)+\P(|\bar{Z}_{n,d}|>\delta)\\
&=\P(\bar{Z}_n-\bar{Z}_{n,d}\leq y+\delta)\,\P\Big(\bigcap\limits_{l=1}^d B_{I_l}\Big)+\P(|\bar{Z}_{n,d}|>\delta)\\
&\leq\P(A_n(y+2\delta))\,\P\Big(\bigcap\limits_{l=1}^d B_{I_l}\Big)+2\,\P(|\bar{Z}_{n,d}|>\delta)
\end{align*}
and similarly,
\begin{align*}
\P\Big(A_n(y)\bigcap\limits_{l=1}^d B_{I_l}\Big) \geq\P(A_n(y-2\delta))\,\P\Big(\bigcap\limits_{l=1}^d B_{I_l}\Big)-2\,\P(|\bar{Z}_{n,d}|>\delta).
\end{align*}
Therefore, it follows that
\begin{align*}
&\Big|\P\Big(A_n(y)\bigcap\limits_{l=1}^d B_{I_l}\Big)-\P(A_n(y))\P\Big(\bigcap\limits_{l=1}^d B_{I_l}\Big)\Big|\\
&\leq 2 \Big(\P(A_n(y+2\delta))-\P(A_n(y-2\delta))\Big)\,\P\Big(\bigcap\limits_{l=1}^d B_{I_l}\Big)+4\,\P(|\bar{Z}_{n,d}|>\delta).
\end{align*}
If we assume 
\begin{align}
    \sum_{I_1<\ldots<I_d}\P(|\bar{Z}_{n,d}|>\delta)\to 0,
    \label{wk0}
\end{align}
we get using Theorem \ref{thm:clt} and \eqref{eq:gsedgds} that
\begin{align*}
\limsup\limits_{n\to\infty}|W_{n,d}-\bar{W}_{n,d}|\leq 2(\Phi(y+2\delta)-\Phi(y-2\delta))\, \frac{(\mu(U))^d}{d!}.
\end{align*}
Sending $\delta$ to zero establishes \eqref{eq:finaljh}. Therefore it remains to show \eqref{wk0}.
\medskip

By Markov's inequality we obtain for even $\tau\in \N$ and $\delta >0$,
{\small\begin{align}
\P(|\bar{Z}_{n,d}|>{2}\delta)
&\le \frac{1}{n^{2\tau}\sigma_n^\tau\delta^\tau}\bigg(\!\!\E\Big[\Big(\!\!\sum_{\substack{(i,j)\in\Lambda_{n,d}\\ i\neq j}}\!\!\!\!(\bar{T}_{ij}^2-\E[\bar{T}_{ij}^2])\Big)^\tau\Big]+\E\Big[\Big(\!\!\sum_{(i,i)\in\Lambda_{n,d}}\!\!\!\!(\bar{T}_{ii}^2-\E[\bar{T}_{ii}^2])\Big)^\tau\Big]\!\!\bigg).\label{splt}
\end{align}}


Letting $\mathcal{K}:=\{i_l, j_l\,|\,l=1,\ldots,d\}$ we write the first term on the \rhs\ of \eqref{splt} as follows
{\small\begin{align}
&\frac{1}{n^{2\tau}\sigma_n^\tau\delta^\tau}\E\Big[\Big(\sum_{\substack{(i,j)\in\Lambda_{n,d}\\ i\neq j}}(\bar{T}_{ij}^2-\E[\bar{T}_{ij}^2])\Big)^\tau\Big]
=\frac{2^\tau}{n^{2\tau}\sigma_n^\tau\delta^\tau}\E\Big[\Big(\sum_{k\in\mathcal{K}}\sum_{\substack{i=1\\i\neq k}}^{p}(\bar{T}_{ik}^2-\E[\bar{T}_{ik}^2])\Big)^\tau\Big]\nonumber\\
&\le \frac{2 (8d)^{\tau-1}}{n^{2\tau}\sigma_n^\tau\delta^\tau}\sum_{k\in\mathcal{K}}\bigg(\E\Big[\Big(\sum_{\substack{i=1\\i\neq k}}^{p}(\bar{T}_{ik}^2-\E[\bar{T}_{ik}^2|\tx_{k}])\Big)^\tau\Big]+\E\Big[\Big(\sum_{\substack{i=1\\i\neq k}}^{p}(\E[\bar{T}_{ik}^2|\tx_{k}]-\E[\bar{T}_{ik}^2])\Big)^\tau\Big]\bigg), \label{cond}
\end{align}}
where we used the Marcinkiewicz-Zygmund inequality (see \cite[Theorem~2, p.~386]{teicher1978probability}) and the inequality $(a+b)^c\leq 2^{c-1}(a^c+b^c)$ for $a>0$, $b>0$ and $c\geq 1$.
We apply the law of total expectation and the Marcinkiewicz-Zygmund inequality to the first term of \eqref{cond} and obtain
\begin{align*}
&\E\Big[\E\Big[\Big(\sum_{\substack{i=1\\i\neq k}}^{p}(\bar{T}_{ik}^2-\E[\bar{T}_{ik}^2|\tx_{k}])\Big)^\tau\Big|\tx_{k}\Big]\Big]\nonumber\leq K_\tau (p-1)^{\tau/2-1}\sum_{\substack{i=1\\i\neq k}}^{p}\E[(\bar{T}_{ik}^2-\E[\bar{T}_{ik}^2|\tx_{k}])^\tau]
\end{align*}
where $K_\tau$ is a constant only depending on $\tau$, which may vary from line to line.
Recalling the notation $\bar{T}_{ij}=\sum_{t=1}^n \bar{\xi}_{ijt}$ we may write
\begin{align}
&\E[(\bar{T}_{ik}^2-\E[\bar{T}_{ik}^2|\tx_{k}])^\tau]=\nonumber\\
&\sum_{t_1,u_1,\ldots,t_\tau,u_\tau=1}^n \!\!\!\!\!\!\!\E[(\bar{\xi}_{ikt_1}\bar{\xi}_{iku_1}-\E[\bar{\xi}_{ikt_1}\bar{\xi}_{iku_1}|\tx_{k}])\ldots(\bar{\xi}_{ikt_\tau}\bar{\xi}_{iku_\tau}-\E[\bar{\xi}_{ikt_\tau}\bar{\xi}_{iku_\tau}|\tx_{k}])].\label{fac}
\end{align}
If more than $\tau+1$ of the indices $t_1,u_1,\ldots,\,t_\tau,u_\tau$ are different, then there exists a tuple $(t_k,u_k)$ such that $t_k\neq t_l$, $t_k\neq u_l$, $u_k\neq t_l$ and $u_k\neq u_l$ for every $l\neq k$ and therefore one of the factors in the mean of \eqref{fac} is independent, so that the summand disappears.
The remaining summands are bounded above by 
\begin{align}\label{mea}
C|\E[\bar{\xi}_{ikt_1}\bar{\xi}_{iku_1}\cdots\bar{\xi}_{ikt_\tau}\bar{\xi}_{iku_\tau}]|=C\E[\bar{X}_{it_1}\bar{X}_{iu_1}\cdots\bar{X}_{it_\tau}\bar{X}_{iu_\tau}]^2.
\end{align}
Similar to \eqref{mo1}, \eqref{mo2} and \eqref{mo3} it holds that $\E[\bar{X}]=o((np)^{-(\tilde{s}-1)/\tilde{s}})$, $\E[\bar{X}^r]\leq C$ for $r\leq \tilde{s}$ and $\E[\bar{X}^r]= o((np)^{(r-\tilde{s})/\tilde{s}})$ for $r>\tilde{s}$. 
From this we deduce that if $|\{t_1,u_1,\ldots,t_{\tau}, u_{\tau}\}|=\ell$ one has
\begin{align}
\E[\bar{X}_{it_1}\bar{X}_{iu_1}\cdots\bar{X}_{it_\tau}\bar{X}_{iu_\tau}]^2\lesssim  (\E[\bar{X}^2]^{\ell-1}\E[\bar{X}^{2\tau-2\ell+2}])^2=o((np)^{2(2\tau-2\ell+2-\tilde{s})/\tilde{s}}).\label{summand}
\end{align}
Therefore, we get for the first term of \eqref{cond} 
{\small\begin{align} 
\frac{2 (8d)^{\tau-1}}{n^{2\tau}\sigma_n^\tau\delta^\tau}\sum_{k\in\mathcal{K}}\E\Big[\Big(\sum_{\substack{i=1\\i\neq k}}^{p}(\bar{T}_{ik}^2-\E[\bar{T}_{ik}^2|\tx_{k}])\Big)^\tau\Big]&=\frac{p^{\tau/2}}{n^{2\tau}\sigma_n^\tau}\max_{1\leq \ell\leq \tau+1}o(n^\ell (np)^{(4\tau-4\ell+4-2\tilde{s})/\tilde{s}}).\label{ok2}
\end{align}}
The term in the maximum is either increasing or decreasing with $\ell$ or it is equal to $o((np)^{4\tau+4-2\tilde{s})/\tilde{s}})$ if $n\asymp (np)^{4/\tilde{s}}$.
Therefore, the \rhs\ in \eqref{ok2} is 
{\small\begin{align}
&\frac{p^{\tau/2}}{n^{2\tau}\sigma_n^\tau}\max\{o(n^{4\tau/\tilde{s}-1}p^{4\tau/\tilde{s}-2}),o(n^{\tau-1} p^{-2})\}=o\Big(\frac{n^{(s+2)\tau/\tilde{s}-1}p^{\tau/2-2}}{n^{2\tau}\sigma_n^\tau}\Big)=o(n^{(4-\tilde{s})\tau/(2\tilde{s})-1}),\phantom{abs}\label{summary1}
\end{align}}
where we used that $p=O(n^{(s-2)/4})$ and $(s+2)/\tilde{s}>1$ for an $\varepsilon<1$, and additionally,
 $\sigma_n^2\asymp (p/n)^3+ p/n$ if $\E[X^4]\neq 1$.\\  In the Bernoulli case $\E[X^4]= 1$, \eqref{mea} is bounded by a constant and therefore the first term of \eqref{cond} is $O(n/p^{\tau/2})$ as $\sigma_n^2\asymp (p/n)^2$.
 \medskip

%
%

By Jensen's inequality the second term of \eqref{cond} is bounded above by
\begin{align*}
&K'_\tau(p-1)^{\tau-1}\sum_{\substack{i=1\\i\neq k}}^{p}\E[(\E[\bar{T}_{ik}^2|\tx_{k}]-\E[\bar{T}_{ik}^2])^\tau],
\end{align*}
where $K'_\tau$ is a constant only depending on $\tau$, which may vary from line to line.
For the mean in the sum we can write
\begin{align*}
\E[(\E[\bar{T}_{ik}^2|\tx_{k}]-\E[\bar{T}_{ik}^2])^\tau]=\sum_{t_1,\, u_1,\ldots,t_\tau,u_\tau=1}^n &\E[((\bar{X}_{kt_1}\bar{X}_{ku_1}-\E[\bar{X}_{kt_1}\bar{X}_{ku_1}])\E[\bar{X}_{it_1}\bar{X}_{iu_1}])\cdots\\
&\times ((\bar{X}_{kt_\tau}\bar{X}_{ku_\tau}-\E[\bar{X}_{kt_\tau}\bar{X}_{ku_\tau}])\E[\bar{X}_{it_\tau}\bar{X}_{iu_\tau}])].
\end{align*}

We consider one of the summands above. There can be $0\leq q\leq \tau$ pairs of indices $(t_i,u_i)$ with $t_i=u_i$. Assume these pairs are $(t_1,u_1),\ldots, (t_q,u_q)$ and for $i>q$ it holds that $t_i\neq u_i$. Then the mean in the sum equals
\begin{align*}
&\E[\bar{X}]^{2(\tau-q)}\E[\bar{X}^2]^q\\
&\times\E[(\bar{X}_{kt_1}^2-\E[\bar{X}^2])\cdots(\bar{X}_{kt_q}^2-\E[\bar{X}^2])(\bar{X}_{kt_{q+1}}\bar{X}_{ku_{q+1}}-\E[\bar{X}]^2)\cdots(\bar{X}_{kt_\tau}\bar{X}_{ku_\tau}-\E[\bar{X}]^2)].
\end{align*}
If there are more than $\tau-q/2+1$ different indices, the summand is equal to zero. For $1\leq \ell\leq \tau-q/2+1$ different indices the summand is bounded above by 
\begin{align*}
C\E[\bar{X}]^{2(\tau-q)}\E[\bar{X}_{it_1}^2\cdots\bar{X}_{it_q}^2\bar{X}_{it_{q+1}}\bar{X}_{iu_{q+1}}\cdots\bar{X}_{it_\tau}\bar{X}_{iu_\tau}]&\lesssim  \E[\bar{X}]^{2(\tau-q)}(\E[\bar{X}^2]^{\ell-1}\E[\bar{X}^{2\tau-2\ell+2}])\\
&=o((np)^{(2\tau-2\ell+2-2(\tilde{s}-1)(\tau-q))/\tilde{s}}).
\end{align*}
Therefore, we get for the second term of \eqref{cond}
\begin{align}
&\frac{2 (8d)^{\tau-1}}{n^{2\tau}\sigma_n^\tau\delta^\tau}\sum_{k\in\mathcal{K}}\E\Big[\Big(\sum_{\substack{i=1\\i\neq k}}^{p}(\E[\bar{T}_{ik}^2|\tx_{k}]-\E[\bar{T}_{ik}^2])\Big)^\tau\Big]\nonumber\\
&=\frac{p^{\tau}}{n^{2\tau}\sigma_n^\tau}\max_{\substack{1\leq \ell\leq \tau-q/2+1\\ 0\leq q\leq \tau}}o(n^\ell (np)^{-2\ell/\tilde{s}}(np)^{(2\tau+2-2(\tilde{s}-1)(\tau-q))/\tilde{s}}).\label{oq}
\end{align}
The term in the maximum is either increasing or decreasing with $\ell$ or it is equal to\\ $o((np)^{2\tau+2-2(\tilde{s}-1)(\tau-q))/\tilde{s}})$ if $n\asymp (np)^{2/\tilde{s}}$.
Therefore, the expression in \eqref{oq} is
\begin{align*}
&\frac{p^{\tau}}{n^{2\tau}\sigma_n^\tau}\max\{\max_{0\leq q\leq \tau}o(n(np)^{(2(2-\tilde{s})\tau+2(\tilde{s}-1)q)/\tilde{s}}), \max_{0\leq q\leq \tau}o(n^{\tau-q/2+1}(np)^{((2\tilde{s}-1)q-2(\tilde{s}-1)\tau)/\tilde{s}})\}\\
&=\frac{p^{\tau}}{n^{2\tau}\sigma_n^\tau}\max\{o(n(np)^{2\tau/\tilde{s}}),o(n^{\tau/2+1}(np)^{\tau/\tilde{s}})\}=o\Big(\frac{n^{\tau/2+1}(np)^{\tau/\tilde{s}}p^{\tau}}{n^{2\tau}\sigma_n^\tau}\Big),
\end{align*}
since in the first step both terms in the maximum are growing with $q$, so that the terms are the largest for $q=\tau$, and since in the second step the last term of the maximum is larger than the first term for every $p=O(n^{(s-2)/4})$. As $\sigma_n^2\asymp p/n+(p/n)^3$ if $\E[X^4]\neq 1$, we have
\begin{align}
o\Big(\frac{n^{\tau/2+1}(np)^{\tau/\tilde{s}}p^{\tau}}{n^{2\tau}\sigma_n^\tau}\Big)
=o(n^{-(\tilde{s}-4)\tau/(2\tilde{s})+1})\label{summary2}.
\end{align} 
In the Bernoulli case the second term of \eqref{cond} is equal to zero. 
\medskip

For the second term of \eqref{splt} it holds that
\begin{align*}
&\frac{1}{n^{2\tau}\sigma_n^\tau\delta^\tau}\E\Big[\Big(\sum_{(i,i)\in\Lambda_{n,d}}(\bar{T}_{ii}^2-\E[\bar{T}_{ii}^2])\Big)^\tau\Big]\leq\frac{1}{n^{2\tau}\sigma_n^\tau\delta^\tau} (2d)^{\tau-1}\sum_{(i,i)\in\Lambda_{n,d}}\E[(\bar{T}_{ii}^2-\E[\bar{T}_{ii}^2])^\tau]\,,
\end{align*}
where
\begin{align}
\E[(\bar{T}_{ii}^2-\E[\bar{T}_{ii}^2])^\tau]\nonumber&=\E\Big[\Big(\sum_{t,u=1}^n(\bar{\xi}_{iit}\bar{\xi}_{iiu}-\E[\bar{\xi}_{iit}\bar{\xi}_{iiu}])\Big)^\tau\Big]\nonumber\\
&=\sum_{t_1,u_1,\ldots, t_\tau,u_\tau=1}^n\E[(\bar{\xi}_{iit_1}\bar{\xi}_{iiu_1}-\E[\bar{\xi}_{iit_1}\bar{\xi}_{iiu_1}])\ldots(\bar{\xi}_{iit_\tau}\bar{\xi}_{iiu_\tau}-\E[\bar{\xi}_{iit_\tau}\bar{\xi}_{iiu_\tau}])].\label{gr}
\end{align}
Again the summands with more than $\tau+1$ indices disappear and a summand with $\ell\leq \tau+1$ indices is bounded by

\begin{align*}
C\E[\bar{X}_{it_1}^2\bar{X}_{iu_1}^2\ldots\bar{X}_{it_\tau}^2\bar{X}_{iu_\tau}^2]\lesssim (\E[\bar{X}^2]^{\ell-1}\E[\bar{X}^{4\tau-4\ell+4}])=o((np)^{(4\tau-4\ell+4-\tilde{s})/\tilde{s}})
\end{align*}
by similar arguments as for \eqref{summand}, so that we obtain

\begin{align}
\frac{1}{n^{2\tau}\sigma_n^\tau\delta^\tau}\E\Big[\Big(\sum_{(i,i)\in\Lambda_{n,d}}(\bar{T}_{ii}^2-\E[\bar{T}_{ii}^2])\Big)^\tau\Big]=\frac{1}{n^{2\tau}\sigma_n^\tau\delta^\tau}\max_{1\leq \ell\leq \tau+1}o(n^\ell (np)^{(4\tau-4\ell+4-\tilde{s})/\tilde{s}}).\label{ok}
\end{align}

The term in the maximum is either increasing or decreasing with $\ell$ or it is equal to $o((np)^{4\tau+4-\tilde{s})/\tilde{s}}$, if $n\asymp (np)^{4/\tilde{s}}$. We deduce

\begin{align}
&\frac{1}{n^{2\tau}\sigma_n^\tau\delta^\tau}\max\{o(n^{4\tau/\tilde{s}}p^{4\tau/\tilde{s}-1}),o(n^\tau p^{-1})\}=o\Big(\frac{n^{(s+2)\tau/\tilde{s}} p^{-1}}{n^{2\tau}\sigma_n^\tau\delta^\tau}\Big)=o((n+p)^{(2-\tilde{s})/\tilde{s}}),\label{summary3}
\end{align}
where we used that $(s+2)/\tilde{s}>1$ for an $\varepsilon<1$ and $\sigma_n^2\asymp p/n+(p/n)^3$, if $\E[X^4]\neq 1$.  In the Bernoulli case the second term of \eqref{splt} is zero.\\

In summary we derive by \eqref{summary1}, \eqref{summary2}, \eqref{summary3} and noting that all estimates are uniform in $I_1,\ldots, I_d$ that
\begin{align*}
\max_{I_1,\ldots, I_d} \P(|\bar{Z}_{n,d}|>\delta)=o(n^{-(\tilde{s}-4)\tau/(2\tilde{s})+1})=o(n^{-(s-2)d/2-1}), 
\end{align*}
where $\tau$ can be chosen as the smallest even integer larger than $$\frac{\tilde{s}(s-2)}{\tilde{s}-4}d+\frac{4\tilde{s}}{\tilde{s}-4}.$$
Since the number of possible choices for $I_1<\ldots <I_d$ is $\binom{p(p-1)/2}{d}\leq p^{2d}$ and $p=O(n^{(s-2)/4})$ we conclude that
\begin{align}\label{wsk0}
\sum_{I_1<\ldots<I_d}\P(|\bar{Z}_{n,d}|>\delta)\lesssim \frac{p^{2d}}{n^{(s-2)d/2+1}}\to 0,
\end{align}
as $n\to\infty$, which finishes the proof of \eqref{wk0}.

\section{Auxiliary results}
\label{sec:5}

Throughout this section $p=p_n$ is a sequence of positive integers tending to infinity as $\nto$. Furthermore, let $X,(X_{it})_{i,t\ge 1}$ be iid random variables with $\E[X]=0$ and $\E[X^2]=1$.
\begin{lemma}\label{lem:cut}
 Assume $\E[|X|^s]<\infty$ for some $s\ge 4$. For a positive sequence $(\beta_n)_n$, which tends to zero and satisfies $\beta_n \gg (\E[|X|^s\mathds{1}_{\{|X|>\beta_n(np)^{1/s}\}}])^{1/s}$, set
\begin{align*}
\bar{X}_{it}&:= X_{it}\mathds{1}_{\{|X_{it}|\leq \beta_n (np)^{1/s}\}},\quad 1\leq i\leq p,\,1\leq t\leq n, \\
T_{ij}&:=\sum_{t=1}^n X_{it}X_{jt},\quad \bar{T}_{ij}:=\sum_{t=1}^n\bar{X}_{it}\bar{X}_{jt},\quad 1\leq i,\,j\leq p.
\end{align*}
Then it holds that
$$\frac{1}{n^2\sigma_n}\sum_{1\leq i,j\leq p}(T_{ij}^2-\bar{T}_{ij}^2-\E[T_{ij}^2-\bar{T}_{ij}^2]) \cip 0\,,\qquad \nto\,,$$
where $\sigma_n$ is defined in \eqref{eq:defsigma}.
\end{lemma}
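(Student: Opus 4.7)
I would prove the lemma by establishing the two stronger facts
\begin{align*}
Y_n := \frac{1}{n^2\sigma_n}\sum_{i,j=1}^p (T_{ij}^2 - \bar T_{ij}^2) \cip 0 \quad \text{and} \quad \E[Y_n] \to 0,
\end{align*}
from which the claim follows because $Y_n - \E[Y_n]$ is then the sum of an $o_\P(1)$ random part and an $o(1)$ deterministic shift.

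For $Y_n \cip 0$, I would exploit the truncation event $A_n := \bigcap_{i=1}^p\bigcap_{t=1}^n \{|X_{it}| \le \beta_n (np)^{1/s}\}$, on which every $X_{it}$ coincides with $\bar X_{it}$, hence $T_{ij} = \bar T_{ij}$ for all pairs and $Y_n$ vanishes identically. A union bound combined with Markov's inequality gives
\begin{align*}
\P(A_n^c) \le np\, \P(|X| > \beta_n (np)^{1/s}) \le \frac{\E[|X|^s\mathds{1}_{\{|X|>\beta_n(np)^{1/s}\}}]}{\beta_n^s}\to 0
\end{align*}
by the defining property of $\beta_n$. Since $\{Y_n \ne 0\}\subset A_n^c$, convergence in probability follows.

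For $\E[Y_n] \to 0$, I would compute the relevant second moments explicitly. For $i \ne j$ one has $\E[T_{ij}^2] = n$ and $\E[\bar T_{ij}^2] = n\E[\bar X^2]^2 + n(n-1)\E[\bar X]^4$, whereas for $i = j$ one finds $\E[T_{ii}^2] = n\E[X^4] + n(n-1)$ and $\E[\bar T_{ii}^2] = n\E[\bar X^4] + n(n-1)\E[\bar X^2]^2$. The elementary tail inequality $\E[|X|^k\mathds{1}_{\{|X|>M\}}] \le M^{k-s}\E[|X|^s\mathds{1}_{\{|X|>M\}}]$, applied with $M = \beta_n(np)^{1/s}$ for $k \in \{1,2,4\}$ and combined with $\beta_n^s \gg \E[|X|^s\mathds{1}_{\{|X|>M\}}]$, yields
\begin{align*}
|1 - \E[\bar X^2]| = o((np)^{-(s-2)/s}),\quad |\E[\bar X]| = o((np)^{-(s-1)/s}),\quad |\E[X^4] - \E[\bar X^4]| = o(\beta_n^4).
\end{align*}
Summing the $p(p-1)$ off-diagonal and $p$ diagonal differences and comparing with $n^2\sigma_n \asymp n^{1/2}p^{1/2}(n+p)$ (valid when $\E[X^4]\ne 1$) then shows $\E[Y_n] \to 0$. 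The Bernoulli special case $\E[X^4] = 1$ (i.e., $\P(X = \pm 1) = 1/2$) is trivial, since the truncation is inactive as soon as $\beta_n(np)^{1/s} \ge 1$, giving $Y_n \equiv 0$.

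The main obstacle is the last step: each of the three truncation estimates must be traced through two separate regimes (whether $p \lesssim n$ or $p \gg n$), and one must verify that the resulting ratios beat the two components of $n^2\sigma_n$ uniformly; the weakest bound is the one on $|\E[X^4] - \E[\bar X^4]|$, which only decays as $\beta_n^4$, but the $pn$-scaling of its contribution combined with $n^2\sigma_n \gtrsim n^{3/2}p^{1/2}\wedge n^{1/2}p^{3/2}$ still yields a vanishing ratio. Once this bookkeeping is carried out, combining the two displayed facts delivers the lemma.
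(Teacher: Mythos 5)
Your proposal is correct and takes essentially the same approach as the paper's proof: both isolate the complement of the truncation event (your $A_n$, the paper's $Q_n^c$), observe that $T_{ij}=\bar T_{ij}$ on that event so only the deterministic bias $\frac{1}{n^2\sigma_n}\sum_{i,j}\E[T_{ij}^2-\bar T_{ij}^2]$ needs controlling, and then bound that bias via the same truncated-moment estimates compared against $n^2\sigma_n\asymp n^{1/2}p^{1/2}(n+p)$, treating the Bernoulli case $\E[X^4]=1$ separately as trivial.
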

\begin{proof}
We set 
\begin{align*}
Q_n:=\bigcup_{i=1}^p\bigcup_{t=1}^n\{|X_{it}|>\beta_n(np)^{1/s}\}.
\end{align*}
The probability of $Q_n$ tends to zero as $n\to\infty$, since by the union bound and Markov's inequality
\begin{align}
\P(Q_n)\leq np\,\P(|X|>\beta_n (np)^{1/s})&\leq np\,\P\Big(|X|\mathds{1}_{\{|X|>\beta_n (np)^{1/s}\}}>\beta_n (np)^{1/s}\Big)\nonumber\\
&\leq np\,\frac{\E[|X|^s\mathds{1}_{\{|X|>\beta_n (np)^{1/s}\}}]}{\beta_n^s np}\to 0\label{cu2}\,,
\end{align}
where the properties of the sequence $\beta_n$ were used in the last step.
Therefore, we have for $\delta>0$,
\begin{align}
&\P\Big(\frac{1}{n^2\sigma_n}\Big|\sum_{1\leq i,j\leq p}(T_{ij}^2-\bar{T}_{ij}^2-\E[T_{ij}^2-\bar{T}_{ij}^2])\Big|>\delta\Big)\nonumber\\
&\leq \P\Big(\frac{1}{n^2\sigma_n}\Big|\sum_{1\leq i,j\leq p}(T_{ij}^2-\bar{T}_{ij}^2-\E[T_{ij}^2-\bar{T}_{ij}^2])\Big|>\delta, Q_n^C\Big)+\P(Q_n)\nonumber\\
&\leq \P\Big(\frac{1}{n^2\sigma_n}\Big|\sum_{1\leq i,j\leq p}\E[T_{ij}^2-\bar{T}_{ij}^2]\Big|>\delta\Big)+o(1).\label{cu1}
\end{align}
We observe that
\begin{align}\label{momm1}
|\E[\bar{X}]|\leq\frac{\E[|X|^s\mathds{1}_{\{|X|>\beta_n(np)^{1/s}\}}]}{\beta_n^{s-1} (np)^{(s-1)/s}}=o((np)^{-(s-1)/s})
\end{align}
and similarly it follows
\begin{align}\label{momm2}
\E[\bar{X}^2]&=1+o((np)^{-(s-2)/s}) \quad \text{ and } \quad \E[\bar{X}^4]=\E[X^4]+o((np)^{-(s-4)/s}).
\end{align}
Thus we obtain for $1\leq i<j\leq p$,
\begin{align*}
&\E[|T_{ij}^2-\bar{T}_{ij}^2|]=\E[|T_{12}^2-\bar{T}_{12}^2|]\\
&\le n\E[X_{11}^2X_{21}^2-\bar{X}_{11}^2\bar{X}_{21}^2]+n(n-1)\E[|X_{11}X_{21}X_{12}X_{22}-\bar{X}_{11}\bar{X}_{21}\bar{X}_{12}\bar{X}_{22}|]\\
&\lesssim n (1-\E[\bar{X}^2]^2) +n(n-1)\E[|\bar{X}|]^4= o(n^{2/s}p^{-(s-2)/s})
\end{align*}
and as $T_{11}^2-\bar{T}_{11}^2$ is nonnegative we get
\begin{align*}
    \E[|T_{11}^2-\bar{T}_{11}^2|]&=n\E[X^4-\bar{X}^4]+n(n-1)(1-\E[\bar{X}^2]^2)=o(n)+o(n^{1+2/s}p^{-1+2/s}).
\end{align*}
Thereby, it holds 
\begin{align*}
\frac{1}{n^2\sigma_n}\sum_{1\leq i\le j\leq p}\E[|T_{ij}^2-\bar{T}_{ij}^2|]&=o\Big(\frac{n^{2/s-2}p^{1+2/s}}{\sigma_n}\Big)+o\Big(\frac{p}{n\sigma_n}\Big)+o\Big(\frac{n^{2/s-1}p^{2/s}}{\sigma_n}\Big)\,.
\end{align*}
In the case $\E[X^4]\neq 1$, the \rhs\ tends to zero as $\sigma_n\asymp (p/n)^{1/2}+(p/n)^{3/2}$. 
In the symmetric Bernoulli case $X^2=1$, the probability in \eqref{cu1} is zero, establishing the desired result.
\end{proof}

\begin{lemma}\label{lem:var}
Let $\bar{T}_{11}$, $\bar{T}_{12}$ and $\bar{T}_{13}$ be defined as in \eqref{bart}. Under the assumptions of Theorem \ref{thm:clt} it holds, as $\nto$,
\begin{align*}
\Var(\bar{T}_{11}^2)&=4(\E[X^4]-1)n^3+o(n^3)+o(n^2p)\\
\Var(\bar{T}_{12}^2)&=2n^2+((\E[X^4])^2-3)n+o(n)+o(n^{3/2}p^{-1/2})\\
\Cov(\bar{T}_{12}^2,\bar{T}_{13}^2)&=(\E[X^4]-1)n+o(n)\\
\Cov(\bar{T}_{11}^2,\bar{T}_{12}^2)&=2(\E[X^4]-1)n^2+o(n^2)+o(n^{5/2}p^{-1/2}).
\end{align*}

\end{lemma}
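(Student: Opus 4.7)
The plan is to treat all four identities by a single uniform expansion. Since $\bar T_{ij}=\sum_{t=1}^n\bar X_{it}\bar X_{jt}$, each of the products $\bar T_{11}^2$, $\bar T_{12}^2$, $\bar T_{12}^2\bar T_{13}^2$ and $\bar T_{11}^2\bar T_{12}^2$ unfolds into a $4$-fold sum over time indices $(t_1,t_2,t_3,t_4)\in\{1,\dots,n\}^4$. Taking expectations and exploiting the independence across rows, each such expectation factors into a product, over the distinct row indices in $\{i,j,k,\ell\}$, of an expectation of a product of $\bar X$'s evaluated at the appropriate subset of the $t_a$'s. Each of these row-factors depends only on the multi-set structure of its time indices and equals a product of moments $\E[\bar X^r]$. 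In this way every computation reduces to a finite sum indexed by the set partitions of $\{1,2,3,4\}$ induced by the equivalence $t_a=t_b$, weighted by the number of 4-tuples realising that partition (at most $n^{\#\text{blocks}}$). I would then substitute the moment estimates already established in the proof of Theorem~\ref{thm:clt}, namely
\[
|\E[\bar X]|=o((np)^{-3/4}),\qquad \E[\bar X^2]=1+o((np)^{-1/2}),\qquad \E[\bar X^4]=\E[X^4]+o(1),
\]
together with $\E[\bar X^{4+k}]\le (\beta_n(np)^{1/4})^k\E[\bar X^4]=o((np)^{k/4})$ for $k\ge 1$.

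The four cases then become a bookkeeping exercise in which only a handful of partitions contribute to leading order. For $\Var(\bar T_{11}^2)$ the sum $\bar T_{11}=\sum_t Y_t$ with $Y_t=\bar X_{1t}^2$ is already iid nonnegative, so writing $S:=\bar T_{11}-n\E[Y]$ and using $\Var(\bar T_{11}^2)=4n^2(\E[Y])^2\Var(S)+4n\E[Y]\,\E[S^3]+\Var(S^2)$, the leading term $4n^3\E[\bar X^2]^2(\E[\bar X^4]-\E[\bar X^2]^2)\sim 4(\E[X^4]-1)n^3$ dominates, while the cross term is $O(n^2\E[\bar X^6])$ and $\Var(S^2)$ is of order $n\E[\bar X^8]+n^2(\Var Y)^2$, both absorbed into $o(n^3)+o(n^2p)$. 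For $\Var(\bar T_{12}^2)=\E[\bar T_{12}^4]-(\E[\bar T_{12}^2])^2$ only the partition $\{1,2,3,4\}$ (contributing $n(\E[X^4])^2$) and the three $2{+}2$ partitions (contributing $3n(n-1)\E[\bar X^2]^4\sim 3n(n-1)$) survive among the $15$, and subtracting $(\E[\bar T_{12}^2])^2\approx n^2$ gives $2n^2+((\E[X^4])^2-3)n$. For $\Cov(\bar T_{12}^2,\bar T_{13}^2)$, the row-$2$ factor $\E[\bar X_{2t_1}\bar X_{2t_2}]$ and the row-$3$ factor $\E[\bar X_{3t_3}\bar X_{3t_4}]$ each force the corresponding time pair to coincide, else they are $\E[\bar X]^2=o((np)^{-3/2})$; under $t_1=t_2$ and $t_3=t_4$ only the all-equal partition (giving $n\E[X^4]$) and $\{1,2\}\{3,4\}$ with $t_1\ne t_3$ (giving $n(n-1)$) remain, and subtracting $\E[\bar T_{12}^2]\E[\bar T_{13}^2]\approx n^2$ yields $(\E[X^4]-1)n$. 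For $\Cov(\bar T_{11}^2,\bar T_{12}^2)$ only the $t_3=t_4$ case survives; classifying $(t_1,t_2,t_3)$ then produces a negligible $n\E[\bar X^6]\E[\bar X^2]^2$, the three two-equal cases summing to $3n(n-1)\E[\bar X^4]\E[\bar X^2]^2\sim 3n^2\E[X^4]$, and the all-distinct case $n(n-1)(n-2)\E[\bar X^2]^4\sim n^3$; subtracting $\E[\bar T_{11}^2]\E[\bar T_{12}^2]\sim n^3+(\E[X^4]-1)n^2$ leaves $2(\E[X^4]-1)n^2$.

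Conceptually there is no surprise; the main obstacle is purely one of bookkeeping: to verify that every negligible partition's contribution really fits within the precise error bounds claimed. Each $\E[\bar X]$ factor contributes $o((np)^{-3/4})$ and each moment of order $\ge 5$ contributes $o((np)^{k/4})$, while the number of 4-tuples in a given partition class is bounded by $n^b$ with $b$ the number of blocks; matching these against the targeted error bounds $o(n^3)+o(n^2p)$, $o(n)+o(n^{3/2}p^{-1/2})$, $o(n)$ and $o(n^2)+o(n^{5/2}p^{-1/2})$ requires a case-by-case check, and---most delicately when $p$ and $n$ are on very different scales---a further smallness of the truncation parameter $\beta_n$ relative to $\min(n/p,p/n)^{1/4}$, which is admissible by the freedom in the choice of $\beta_n$ built into the setup of Theorem~\ref{thm:clt}. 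A second point of care is the cancellation step $\E[\bar T_{ij}^2\bar T_{k\ell}^2]-\E[\bar T_{ij}^2]\E[\bar T_{k\ell}^2]$ needed to expose the correct subleading term (the $((\E[X^4])^2-3)n$ in $\Var(\bar T_{12}^2)$ and the $(\E[X^4]-1)n$ in $\Cov(\bar T_{12}^2,\bar T_{13}^2)$); one must expand both products to sufficient accuracy before subtracting so that the truncation errors in $\E[\bar X^2]$ and $\E[\bar X^4]$ cancel cleanly and only the declared $o(\cdot)$ remainders survive.
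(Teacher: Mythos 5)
Your approach is essentially the same as the paper's: unfold each fourth-order expectation into a $4$-fold sum over time indices, factor across independent rows, classify by set partitions of $\{1,2,3,4\}$, and substitute the truncated-moment estimates. The only genuine variation is your centering trick $S:=\bar T_{11}-n\E[Y]$ and the formula $\Var(\bar T_{11}^2)=4n^2(\E[Y])^2\Var(S)+4n\E[Y]\E[S^3]+\Var(S^2)$ for the first identity; the paper instead expands $\E[\bar T_{11}^4]$ and $\E[\bar T_{11}^2]^2$ directly and subtracts. Both lead to the same dominant contribution $4n^3\E[\bar X^2]^2(\E[\bar X^4]-\E[\bar X^2]^2)$. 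Your treatments of the other three identities, and your emphasis on the cancellation needed to expose the $n$-order coefficients, match the paper's calculation.

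One part of your proposal is overstated. You claim that ``a further smallness of the truncation parameter $\beta_n$ relative to $\min(n/p,p/n)^{1/4}$\ldots is admissible by the freedom in the choice of $\beta_n$ built into the setup of Theorem~\ref{thm:clt}.'' That freedom is one-sided: $\beta_n$ must satisfy $\beta_n\gg\big(\E[|X|^4\mathds1_{\{|X|>\beta_n(np)^{1/4}\}}]\big)^{1/4}$, which gives a \emph{lower} bound on the rate at which $\beta_n$ may decay. If $\E[X^4]<\infty$ but the tail is only barely integrable (e.g.\ $\P(|X|>x)\sim c\,x^{-4}(\log x)^{-2}$), that lower bound is logarithmic in $n p$, while the proposed upper bound $(n/p)^{1/4}$ decays polynomially when $p$ grows like a power of $n$; the two requirements are then incompatible, so the extra smallness cannot always be imposed. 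The place where this matters is the $n\E[\bar X^6]\E[\bar X^2]$ contribution in $\Cov(\bar T_{11}^2,\bar T_{12}^2)$, which is $O(\beta_n^2 n^{3/2}p^{1/2})$ and is \emph{not} automatically $o(n^2)+o(n^{5/2}p^{-1/2})$ when $p\gg n$ and $\E[X^6]=\infty$. Note, however, that this same subtlety is present in the paper's own statement of the lemma. It is ultimately harmless for Theorem~\ref{thm:clt}: after multiplying by $4p(p-1)/(n^4\sigma_n^2)$ the extra $o(n^{3/2}p^{1/2})$ term is $o\!\big(p^{5/2}/(n^{5/2}\sigma_n^2)\big)$, which is $o(1)$ because $\sigma_n^2\asymp p/n+(p/n)^3$. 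But you should not present the needed extra smallness of $\beta_n$ as always achievable; either enlarge the error term of the covariance to $o(n^2)+o(n^{5/2}p^{-1/2})+o(n^{3/2}p^{1/2})$, or restrict that estimate to regimes where it is genuinely absorbable, and then check directly that the enlarged bound still suffices in the application.
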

 \begin{proof}
Recalling that $\bar{X}=X\1_{\{|X|\leq (np)^{1/4}\beta_n\}}$, we get by \eqref{momm1} and \eqref{momm2} with $s=4$
\begin{align}
|\E[\bar{X}]|= o((np)^{-3/4})\,, \quad \E[\bar{X}^2]=1+o((np)^{-1/2})\,, \quad \E[\bar{X}^4]=\E[X^4]+o(1)\,. \label{mom1}
\end{align}
For higher moments we obtain
\begin{align}
\E[|\bar{X}|^{4+k}]\leq \E[\bar{X}^4](np)^{k/4}\beta_n^k=o((np)^{k/4})\,,\qquad k\geq 1\,.\label{mom3}
\end{align}
After these preparations we will now prove the first claim of the lemma. 
By the multinomial theorem we have
\begin{align*}
\E[\bar{T}_{11}^4]&=\sum_{t_1,t_2,t_3,t_4=1}^n\E[\bar{X}_{1t_1}^2\bar{X}_{1t_2}^2\bar{X}_{1t_3}^2\bar{X}_{1t_4}^2]\\
&=n(n-1)(n-2)(n-3)\E[\bar{X}^2]^4+6n(n-1)(n-2)\E[\bar{X}^4]\E[\bar{X}^2]^2\\
&\quad +3n(n-1)\E[\bar{X}^4]^2+4n(n-1)\E[\bar{X}^6]\E[\bar{X}^2]+n\E[\bar{X}^8]\\
&=n^4+6n^3(\E[X^4]-1)+o(n^3)+o(n^2p),
\end{align*}
where we used \eqref{mom1} and \eqref{mom3} in the last step. The same arguments also yield
\begin{align}\label{t11}
\E[\bar{T}_{11}^2]= n\E[\bar{X}^4]+n(n-1)\E[\bar{X}^2]^2
&=n^2+n(\E[X^4]-1)+o(n).
\end{align}
Since $\E[\bar{T}_{11}^2]^2=n^4+2n^3(\E[X^4]-1)+o(n^3)$, we conclude that  
\begin{align*}
\Var(\bar{T}_{11}^2)=\E[\bar{T}_{11}^4]-E[\bar{T}_{11}^2]^2=4(\E[X^4]-1)n^3+o(n^3)+o(n^2p),
\end{align*}
which proves the first part of the lemma. For the second part we consider $\Var(\bar{T}_{12}^2)=\E[\bar{T}_{12}^4]-\E[\bar{T}_{12}^2]^2$. Using \eqref{mom1} and \eqref{mom3}, we get
\begin{align*}
\E[\bar{T}_{12}^4]
&=n(n-1)(n-2)(n-3)\E[\bar{X}]^8+6n(n-1)(n-2)\E[\bar{X}^2]^2\E[\bar{X}]^4\\
&\quad +3n(n-1)\E[\bar{X}^2]^4+4n(n-1)\E[\bar{X}^3]^2\E[\bar{X}]^2+n\E[\bar{X}^4]^2\\
&=3n^2-3n+n\E[X^4]^2+o(n)+o(n^{3/2}p^{-1/2}).
\end{align*}
as well as 
\begin{align}\label{t12}
\E[\bar{T}_{12}^2]=\sum_{t_1,t_2=1}^n\E[\bar{X}_{1t_1}\bar{X}_{2t_1}\bar{X}_{1t_2}\bar{X}_{2t_2}]=n\E[\bar{X}^2]^2+n(n-1)\E[\bar{X}]^4=n+o(1)\,,
\end{align}
which implies $\E[\bar{T}_{12}^2]^2=n^2+o(n)$. Since $\Var(\bar{T}_{12}^2)=\E[\bar{T}_{12}^4]-\E[\bar{T}_{12}^2]^2$ the second part of the lemma is established.

To show part three of the lemma, we compute, using \eqref{mom1}, that
\begin{align*}
\E[\bar{T}_{12}^2\bar{T}_{13}^2]&=\sum_{t_1,t_2,t_3,t_4=1}^n\E[\bar{X}_{1t_1}\bar{X}_{1t_2}\bar{X}_{1t_3}\bar{X}_{1t_4}]\E[\bar{X}_{2t_1}\bar{X}_{2t_2}]\E[\bar{X}_{3t_3}\bar{X}_{3t_4}]\\
&=n(n-1)(n-2)(n-3)\E[\bar{X}]^8+2n(n-1)(n-2)\E[\bar{X}]^4\E[\bar{X}^2]^2\\
&\quad +4n(n-1)(n-2)\E[\bar{X}^2]\E[\bar{X}]^6+n(n-1)\E[\bar{X}^2]^4\\
&\quad +2n(n-1)\E[\bar{X}^2]^2\E[\bar{X}]^4+4n(n-1)\E[\bar{X}^3]\E[\bar{X}^2]\E[\bar{X}]^3+n\E[\bar{X}^4]\E[\bar{X}^2]^2\\
&=n^2+n(\E[X^4]-1)+o(n).
\end{align*}
In conjunction  with $\eqref{t12}$, we then obtain
\begin{align*}
\Cov(\bar{T}_{12}^2,\bar{T}_{13}^2)=\E[\bar{T}_{12}^2\bar{T}_{13}^2]-\E[\bar{T}_{12}^2]\E[\bar{T}_{13}^2]=n(\E[X^4]-1)+o(n).
\end{align*}

Finally, using \eqref{mom1} and \eqref{mom3}, we have 
\begin{align*}
\E[\bar{T}_{11}^2\bar{T}_{12}^2]&=\sum_{t_1=1}^n\sum_{t_2=1}^n\sum_{t_3=1}^n\sum_{t_4=1}^n\E[\bar{X}_{1t_1}^2\bar{X}_{1t_2}^2\bar{X}_{1t_3}\bar{X}_{1t_4}]\E[\bar{X}_{2t_3}\bar{X}_{2t_4}]\\
&=n(n-1)(n-2)(n-3)\E[\bar{X}^2]^2\E[\bar{X}]^4+n(n-1)(n-2)\E[\bar{X}^4]\E[\bar{X}]^4\\
&\quad +n(n-1)(n-2)\E[\bar{X}^2]^4+4n(n-1)(n-2)\E[\bar{X}^3]\E[\bar{X}^2]\E[\bar{X}]^3\\
&\quad +n(n-1)\E[\bar{X}^4]\E[\bar{X}^2]^2+2n(n-1)\E[\bar{X}^3]^2\E[\bar{X}]^2 + 2n(n-1)\E[\bar{X}^5]\E[\bar{X}]^3\\
&\quad +2n(n-1)\E[\bar{X}^4]\E[\bar{X}^2]^2+n\E[\bar{X}^6]\E[\bar{X}^2]\\
&=n^3+3(\E[X^4]-1)n^2+o(n^2)+o(n^{5/2}p^{-1/2}).
\end{align*}
Therefore, we obtain in combination with \eqref{t11} and \eqref{t12},
\begin{align*}
\Cov(\bar{T}_{11}^2,\bar{T}_{12}^2)=\E[\bar{T}_{11}^2\bar{T}_{12}^2]-\E[\bar{T}_{11}^2]\E[\bar{T}_{12}^2]=2(\E[X^4]-1)n^2+o(n^2)+o(n^{5/2}p^{-1/2})\,,
\end{align*}
completing the proof of the lemma.
\end{proof}

\begin{lemma}\label{lem:zw}
Let $(\bar{T}_{ij})$ be defined as in \eqref{bart} an write  $\mathcal{F}_{j}$ for the sigma algebra generated by $\{\tx_1,\ldots,\tx_j\}$, where $\tx_i=(X_{i1}, \ldots, X_{in})$. Under the conditions of Theorem \ref{thm:clt} it holds for $1\le i_1,i_2<j\le p$ that
\begin{align*}
    &\E[\bar{T}_{i_1j}^2\bar{T}_{i_2j}^2|\mathcal{F}_{j-1}]-\E[\bar{T}_{i_1j}^2|\mathcal{F}_{j-1}]\E[\bar{T}_{i_1j}^2|\mathcal{F}_{j-1}]\\
    &=(\E[\bar{X}^4]-(\E[\bar{X}^2])^2)\sum_{t=1}^n\bar{X}_{i_1t}^2\bar{X}_{i_2t}^2\\
    &\quad +\E[\bar{X}](2\E[\bar{X}^3]-\E[\bar{X}^2])\sum_{\substack{t_1,t_2=1\\t_1\neq t_2}}^n \big(\bar{X}_{i_1t_1}^2\bar{X}_{i_2t_1}\bar{X}_{i_2t_2}+\bar{X}_{i_2t_1}^2\bar{X}_{i_1t_1}\bar{X}_{i_1t_2}\big)\\
    &\quad +2((\E[\bar{X}^2])^2-\E[\bar{X}]^4)\sum_{\substack{t_1,t_2=1\\t_1\neq t_2}}^n\bar{X}_{i_1t_1}\bar{X}_{i_1t_2}\bar{X}_{i_2t_1}\bar{X}_{i_2t_2}\\
    &\quad +4\E[\bar{X}]^2(\E[\bar{X}^2]-\E[\bar{X}]^2)\sum_{\substack{t_1,t_2,t_3=1\\|\{t_1,t_2,t_3\}|=3}}^n\bar{X}_{i_1t_1}\bar{X}_{i_1t_2}\bar{X}_{i_2t_1}\bar{X}_{i_2t_2}.
\end{align*}
\end{lemma}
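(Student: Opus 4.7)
The key structural observation is that, for $i_1, i_2 < j$, the random variables $\bar{X}_{i_1 t}$ and $\bar{X}_{i_2 t}$ ($1\le t\le n$) are $\mathcal{F}_{j-1}$-measurable, while the whole row $(\bar{X}_{j t})_{1\le t\le n}$ is independent of $\mathcal{F}_{j-1}$ and has i.i.d.\ entries. Thus, after expanding the squares as
\[
\bar T_{i_1 j}^2=\sum_{s_1,s_2=1}^n \bar X_{i_1 s_1}\bar X_{i_1 s_2}\,\bar X_{j s_1}\bar X_{j s_2}, \qquad \bar T_{i_2 j}^2=\sum_{s_3,s_4=1}^n \bar X_{i_2 s_3}\bar X_{i_2 s_4}\,\bar X_{j s_3}\bar X_{j s_4},
\]
all conditional expectations reduce to unconditional expectations of monomials in the single row $\bar X_{j\cdot}$. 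Concretely, the claim reduces to showing
\[
\E\!\left[\bar T_{i_1 j}^2\bar T_{i_2 j}^2\big|\mathcal{F}_{j-1}\right]-\E\!\left[\bar T_{i_1 j}^2\big|\mathcal{F}_{j-1}\right]\E\!\left[\bar T_{i_2 j}^2\big|\mathcal{F}_{j-1}\right]=\sum_{s_1,s_2,s_3,s_4=1}^n \bar X_{i_1 s_1}\bar X_{i_1 s_2}\bar X_{i_2 s_3}\bar X_{i_2 s_4}\, D(s_1,s_2,s_3,s_4),
\]
where
\[
D(s_1,s_2,s_3,s_4):=\E\!\left[\bar X_{j s_1}\bar X_{j s_2}\bar X_{j s_3}\bar X_{j s_4}\right]-\E\!\left[\bar X_{j s_1}\bar X_{j s_2}\right]\E\!\left[\bar X_{j s_3}\bar X_{j s_4}\right].
\]

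\textbf{Execution.} The plan is to enumerate $D$ according to the partition of $\{1,2,3,4\}$ induced by equalities among $s_1,s_2,s_3,s_4$, using the shorthand $\mu_k:=\E[\bar X^k]$. The cases where $\{s_1,s_2\}$ and $\{s_3,s_4\}$ live in disjoint blocks (including all distinct, and the pattern $s_1=s_2\ne s_3=s_4$) force $D=0$ by independence of the corresponding $\bar X_{j\cdot}$, so those monomials drop out. The surviving cases are:
(i) all four indices equal, giving $D=\mu_4-\mu_2^2$ and the diagonal contribution $\sum_t \bar X_{i_1 t}^2\bar X_{i_2 t}^2$;
(ii) three indices equal and one different (four sub-patterns of which leg of the quadruple is the singleton), each producing a common factor of the form $\mu_1\mu_3-\mu_1^2\mu_2$ attached to a monomial $\bar X_{i_1 t_1}^2\bar X_{i_2 t_1}\bar X_{i_2 t_2}$ or $\bar X_{i_2 t_1}^2\bar X_{i_1 t_1}\bar X_{i_1 t_2}$ over $t_1\ne t_2$;
(iii) the two "crossing" pairings $s_1=s_3\ne s_2=s_4$ and $s_1=s_4\ne s_2=s_3$, each giving $D=\mu_2^2-\mu_1^4$ and the cross-monomial $\bar X_{i_1 t_1}\bar X_{i_1 t_2}\bar X_{i_2 t_1}\bar X_{i_2 t_2}$;
(iv) a single "crossing" coincidence ($s_1=s_3$, $s_1=s_4$, $s_2=s_3$, or $s_2=s_4$) with the other two indices free, each giving $D=\mu_1^2(\mu_2-\mu_1^2)$ and a monomial indexed by three distinct times.
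Summing the multiplicities within each of (i)--(iv) and relabeling the summation indices yields the four displayed terms in the lemma.

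\textbf{Main obstacle.} The entire argument is combinatorial bookkeeping; the only real challenge is to be sure that no partition is miscounted. In particular, one must carefully distinguish whether a pair coincidence occurs within $\{s_1,s_2\}$ (or within $\{s_3,s_4\}$), in which case the corresponding contribution cancels in $D$, from a crossing coincidence between the two pairs, which is what generates the non-zero terms in (iii) and (iv). The two "uncrossed" double-pair pattern and the all-distinct pattern must be explicitly verified to yield $D=0$ so that they do not contaminate the cross-term sums. Once the partitions are laid out systematically and the multiplicity (1, 2, or 4) of each pattern is tracked, collecting terms matches the right-hand side of the lemma.
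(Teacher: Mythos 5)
Your strategy is correct and matches the paper's own proof: expand $\bar T_{i_1 j}^2\bar T_{i_2 j}^2$ multilinearly, condition on $\mathcal{F}_{j-1}$ using that $\tx_j$ is independent of the earlier rows, and classify terms by the partition of $\{s_1,s_2,s_3,s_4\}$. The only organizational difference is that the paper first writes out $\E[\bar T_{i_1j}^2\bar T_{i_2j}^2\mid\mathcal{F}_{j-1}]$ and $\E[\bar T_{i_1j}^2\mid\mathcal{F}_{j-1}]\E[\bar T_{i_2j}^2\mid\mathcal{F}_{j-1}]$ separately and subtracts at the end, whereas you pre-subtract and carry the kernel $D(s_1,\dots,s_4)$ throughout; your packaging is slightly tidier because the vanishing condition (disjointness of $\{s_1,s_2\}$ and $\{s_3,s_4\}$) is then immediate rather than the result of a cancellation between two long displays. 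Your enumeration of the surviving patterns and the per-pattern values of $D$ and multiplicities are all correct.

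However, your closing sentence that collecting terms ``matches the right-hand side of the lemma'' is not literally true, because the lemma as printed contains two small typographical errors that your own computation exposes. For the three-equal pattern you correctly found the factor $\E[\bar X]\E[\bar X^3]-\E[\bar X]^2\E[\bar X^2]$ with multiplicity two per monomial, so the coefficient of the second term should read $2\E[\bar X]\bigl(\E[\bar X^3]-\E[\bar X]\E[\bar X^2]\bigr)$; the printed $\E[\bar X]\bigl(2\E[\bar X^3]-\E[\bar X^2]\bigr)$ is missing a factor of $\E[\bar X]$ in its second summand. Likewise the summand in the last term, summed over $|\{t_1,t_2,t_3\}|=3$, must involve $t_3$ and should read $\bar X_{i_1t_1}\bar X_{i_1t_2}\bar X_{i_2t_1}\bar X_{i_2t_3}$. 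Neither error propagates into the proof of Theorem~\ref{thm:clt}, which only uses that these coefficients are of orders $O(\E[\bar X])$ and $O(\E[\bar X]^2)$ respectively, but a complete write-up of your argument should record the corrected coefficients rather than assert a blanket match with the displayed formula.
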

\begin{proof}
By straightforward calculation we get
\begin{align}
    &\E[\bar{T}_{i_1j}^2\bar{T}_{i_2j}^2|\mathcal{F}_{j-1}]=\E[\bar{X}]^4\sum_{\substack{t_1,\ldots,t_4=1\\|\{t_1,\ldots, t_4\}|=4}}^n\bar{X}_{i_1t_1}\bar{X}_{i_1t_2}\bar{X}_{i_2t_3}\bar{X}_{i_2t_4}\nonumber\\
    &+\E[\bar{X}^2]\E[\bar{X}]^2\sum_{\substack{t_1,t_2,t_3=1\\t_1\neq t_2\neq t_3}}^n\big(\bar{X}_{i_1t_1}^2\bar{X}_{i_2t_2}\bar{X}_{i_2t_3}+\bar{X}_{i_2t_1}^2\bar{X}_{i_1t_2}\bar{X}_{i_1t_3}+4\bar{X}_{i_1t_1}\bar{X}_{i_1t_2}\bar{X}_{i_2t_1}\bar{X}_{i_2t_3}\big)\nonumber\\
    &+\E[\bar{X}^2]^2\sum_{\substack{t_1,t_2=1\\t_1\neq t_2}}^n\big(\bar{X}_{i_1t_1}^2\bar{X}_{i_2t_2}^2+2\bar{X}_{i_1t_1}\bar{X}_{i_1t_2}\bar{X}_{i_2t_1}\bar{X}_{i_2t_2}\big)\nonumber\\
    &+2\E[\bar{X}]\E[\bar{X}^3]\sum_{\substack{t_1,t_2=1\\t_1\neq t_2}}^n\big(\bar{X}_{i_1t_1}^2\bar{X}_{i_2t_1}\bar{X}_{i_2t_2}+\bar{X}_{i_2t_1}^2\bar{X}_{i_1t_1}\bar{X}_{i_1t_2}\big)\nonumber\\
    &+(\E[\bar{X}^4]-\E[\bar{X}^2]^2)\sum_{t=1}^n\bar{X}_{i_1t}^2\bar{X}_{i_2t}^2.\label{pa1}
\end{align}

Additionally it holds that
\begin{align*}
 \E[\bar{T}_{i_1j}^2|\mathcal{F}_{j-1}]=\E[\bar{X}^2]\sum_{t=1}^n\bar{X}_{i_1t}^2 +\E[\bar{X}]^2\sum_{\substack{t_1,t_2=1\\t_1\neq t_2}}^n \bar{X}_{i_1t_1}\bar{X}_{i_1t_2} 
\end{align*}
and therefore 
\begin{align}
    &\E[\bar{T}_{i_1j}^2|\mathcal{F}_{j-1}]\E[\bar{T}_{i_2j}^2|\mathcal{F}_{j-1}]\nonumber\\
    &=\E[\bar{X}^2]^2\sum_{t_1,t_2=1}^n\bar{X}_{i_1t_1}^2\bar{X}_{i_2t_2}^2+\E[\bar{X}^2]\E[\bar{X}]^2\sum_{\substack{t_1,t_2,t_3=1\nonumber\\ t_2\neq t_3}}\bar{X}_{i_1t_1}^2\bar{X}_{i_2t_2}\bar{X}_{i_2t_3}+\bar{X}_{i_2t_1}^2\bar{X}_{i_1t_2}\bar{X}_{i_1t_3}\nonumber\\
    &+\E[\bar{X}]^4\sum_{\substack{t_1,\ldots t_4=1\\ t_1\neq t_2,\, t_3\neq t_4}}^n\bar{X}_{i_1t_1}\bar{X}_{i_1t_2}\bar{X}_{i_2t_3}\bar{X}_{i_2t_4}.\label{pa2}
\end{align}
The lemma follows from \eqref{pa1} and \eqref{pa2}.
\end{proof}

\begin{lemma}\label{lem:hilf}
Let $Y$ and $Y'$ be real-valued random variables and $B$ an arbitrary event. Then it holds for every $y\in\R$ and $\delta>0$ that
\begin{align*}
\P(Y\leq y,\, B)&\leq \P(Y-Y'\leq y+\delta,\, B)+\P(|Y'|>\delta)\,,\\
\P(Y\leq y,\, B)&\geq \P(Y-Y'\leq y-\delta,\, B)-\P(|Y'|>\delta)\,.
\end{align*}
\end{lemma}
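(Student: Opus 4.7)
The plan is to prove both inequalities by a simple event-splitting argument, conditioning on whether $|Y'|$ exceeds $\delta$ or not, and then using monotonicity of probability under set inclusion.

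For the first inequality, I would start from the decomposition
\begin{align*}
\{Y\le y,\, B\} \subseteq \big(\{Y\le y,\, B\}\cap\{|Y'|\le \delta\}\big) \cup \{|Y'|>\delta\}\,.
\end{align*}
On the event $\{|Y'|\le \delta\}$ one has $Y'\ge -\delta$, so $Y-Y' \le Y+\delta$; hence $Y\le y$ together with $|Y'|\le\delta$ implies $Y-Y'\le y+\delta$. Therefore the first set on the right is contained in $\{Y-Y'\le y+\delta,\, B\}$, and the subadditivity of $\P$ yields the desired bound.

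For the second inequality, I would note that on $\{|Y'|\le\delta\}$ we have $Y'\le\delta$, hence $Y\ge Y-Y'-\delta$, so that $\{Y-Y'\le y-\delta,\,B\}\cap\{|Y'|\le\delta\}\subseteq\{Y\le y,\,B\}$. Monotonicity then gives
\begin{align*}
\P(Y\le y,\,B) &\ge \P\big(Y-Y'\le y-\delta,\, B,\, |Y'|\le\delta\big)\\
&\ge \P(Y-Y'\le y-\delta,\,B) - \P(|Y'|>\delta)\,,
\end{align*}
where the last step uses $\P(A)-\P(A\cap\{|Y'|\le\delta\})\le \P(|Y'|>\delta)$.

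Since both inequalities rest solely on these elementary set inclusions and the union bound, there is no genuine obstacle; the proof is essentially two lines per direction and requires no moment assumptions or regularity on $Y$, $Y'$, or $B$.
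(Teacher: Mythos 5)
Your proof is correct and follows essentially the same route as the paper: split on $\{|Y'|\le\delta\}$ versus $\{|Y'|>\delta\}$, use a trivial set inclusion on the former, and finish with subadditivity/monotonicity of $\P$. There is, however, one small slip in the write-up of the second inequality: from $Y'\le\delta$ you assert $Y\ge Y-Y'-\delta$, but that inequality is equivalent to $Y'\ge-\delta$ and, more to the point, goes in the wrong direction for what you need. The bound you actually want is $Y=(Y-Y')+Y'\le(Y-Y')+\delta$ on $\{Y'\le\delta\}$, which gives $Y\le(y-\delta)+\delta=y$ on $\{Y-Y'\le y-\delta,\ |Y'|\le\delta\}$ and hence the claimed inclusion $\{Y-Y'\le y-\delta,\,B\}\cap\{|Y'|\le\delta\}\subseteq\{Y\le y,\,B\}$. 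Since you do state this inclusion correctly and the subsequent probabilistic estimate is fine, this is a typographical lapse rather than a genuine gap; fixing that one line makes the argument airtight and identical in substance to the paper's.
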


\begin{proof}
We have
\begin{align*}
\P(Y\leq y,\, B)&\leq \P(Y\leq y,\, B,\, |Y'|\leq \delta)+\P(|Y'|>\delta)\\
&\leq \P(Y-Y'\leq y+\delta,\, B)+\P(|Y'|>\delta)
\end{align*}
and 
\begin{align*}
\P(Y\leq y,\, B)+\P(|Y'|>\delta)&\geq \P(Y-Y'\leq y-\delta,\, B,\, |Y'|\leq \delta)+ \P(|Y'|>\delta)\\
&\geq \P(Y-Y'\leq y-\delta,\, B).
\end{align*}
\end{proof}
\bibliography{libraryjohannes}
\end{document}